

\documentclass[11pt]{article} 

\usepackage[utf8]{inputenc} 


\usepackage{geometry} 
\geometry{letterpaper} 

\usepackage{graphicx} 


\usepackage{booktabs} 
\usepackage{array} 
\usepackage{paralist} 
\usepackage{verbatim} 
\usepackage{subfig} 
\usepackage{url}
\usepackage{amsmath}
\usepackage{amssymb}

\usepackage{amsthm}

\newtheorem{thm}{Theorem}[section]

\theoremstyle{definition}
\newtheorem{defn}[thm]{Definition}



\title{Sums of Isometric Pairs of Lattices}
\author{Paul Lewis}

\begin{document}
\maketitle
\setcounter{tocdepth}{1}
\tableofcontents

\section{Introduction}
This paper is an investigation of a procedure for constructing lattices by means of taking the sum of a pair of isometric lattices, which may be defined as follows:
\begin{defn}Let $L$ be a lattice, and let $h\in O(L)$ be an isometry of $L$.  We define the sublattices $M$ and $N$ of $L \perp L$ by
\begin{equation*}
M:=\{ (x,x)\mid x\in L\}, N:=\{(x,hx)\mid x\in L\}.
\end{equation*}
We then define $K:=M+N$. \end{defn}
In this paper, parentheses such as $(v,w)$ may refer either to the inner product of vectors or, if $v$ and $w$ are both vectors in $\mathbb{R}^n$, to the vector in $\mathbb{R}^{2n}$ whose first $n$ coordinates are the coordinates of $v$ and whose last $n$ coordinates are those of $w$.  Context will usually dictate the distinction between these interpretations. Where necessary, subscripts (e.g. $K_L$) will be used to clarify which $M$, $N$, or $K$ we are referring to.

We will see that this construction produces a wide variety of interesting lattices.  This process has a number of applications.  For example, the Barnes-Wall family of lattices can be constructed in this way:  the lattice $BW_{2^d}$ can be obtained by letting $L=BW_{2^{d-1}}$ and making an appropriate choice of $h$.

This paper presents various general results pertaining to this construction and discusses several examples of it applied to various well-known lattices of small rank.  In some cases, the lattice $K$ is a known lattice, and in other cases, it is difficult to identify $K$ explicitly.  In many cases, it is possible to embed $K$ in a well-known overlattice, or find a well-known sublattice of small index.

The author acknowledges the support of the National Science Foundation REU program at the University of Michigan and the helpful mentorship of Prof. Robert L. Griess, Jr.

\subsection{Definitions}
The term \emph{lattice} in this paper refers to a finitely generated free abelian group with a rational-valued positive definite symmetric bilinear form.  Any such lattice can be viewed as the integer span of a basis of $\mathbb{R}^n$, in which case the bilinear form is simply the standard inner product.

A lattice is $integral$ if $(v,w)\in\mathbb{Z}$ for all $v,w\in L$.  The \emph{even sublattice} of $L$ is the set of vectors $v\in L$ with even norm ($(v,v)\in 2\mathbb{Z}$).  An \emph{even lattice} is a lattice which is equal to its even sublattice.

Given a lattice $L$, the \emph{dual lattice} $L^*$ is defined as the set of vectors $v$ in the ambient rational vector space $\mathbb{Q}\otimes L$ such that $(v,w)\in\mathbb{Z}$ for all $w\in L$.  The \emph{discriminant group} $\mathcal{D}(L)$ is the group $L^*/L$.

The \emph{Gram matrix} $G_L$ of a lattice $L$ is the symmetric square matrix whose $i,j$-entry is the inner product $(v_i,v_j)$, where $\{v_1,\ldots,v_n\}$ is a basis for $L$.  The determinant of a lattice $L$ is the determinant of the Gram matrix $G_L$.  The \emph{Smith normal form} of a square integer matrix $M$ is the unique matrix $S=PMQ$ formed from $M$ by multiplying on the left and right by invertible (over $\mathbb{Z}$) integer matrices $P,Q$, such that $S$ is a diagonal matrix whose diagonal entries $d_1,\ldots,d_n$ are positive and satisfy $d_1\mid d_2\mid\cdots\mid d_n$.  The \emph{Smith invariant sequence} is this sequence of diagonal entries.\footnote{For more details on this topic, refer to a basic text on the theory of principal ideal domains, such as \cite{HartleyHawkes}.}

If $L$ has Smith invariant sequence $\{d_1,\ldots,d_n\}$, there exists a basis $\{v_1,\ldots,v_n\}$ of $L$ and a basis $\{u_1,\ldots,u_n\}$ of $L^*$ such that $v_1=d_1u_1,\ldots,v_n=d_nu_n$, and hence $\mathcal{D}(L) \cong \mathbb{Z}/d_1\mathbb{Z} \oplus \cdots \oplus \mathbb{Z}/d_n\mathbb{Z}$. Thus $|\mathcal{D}(L)|=d_1\cdots d_n=det(L)$.

A lattice is \emph{rectangular} if it has a basis whose Gram matrix is diagonal.

If we can express $L$ as a sum of nonzero lattices $L_1+L_2$, where $(v,w)=0$ for all $v\in L_1, w \in L_2$, we say $L$ is \emph{orthogonally decomposable} and we write $L=L_1\perp L_2$.

The \emph{isometry group} of $L$, denoted $O(L)$, is the group of endomorphisms of $L$ which preserve the inner product.  We say two lattices $L$ and $M$ are \emph{isometric} and write $L\cong M$ if there exists a bijective linear map $f:L\to M$ which preserves the inner product.

Given an integral lattice $L$, a \emph{root} is a vector in $L$ of norm 2.  Given a root $v\in L$, we can define an isometry $r_v$, where $r_v(x)=x-(x,v)v$.  This is called the \emph{reflection at $v$}.

We frequently refer to the root lattices $A_n$, $D_n$, and $E_n$ in this paper; definitions and basic facts about these lattices can be found in \cite{Griess}.

\subsection{Basic Remarks}
Note that of these lattices $M,N$ is isometric to $\sqrt{2}L$.  If we take $h=1$, $M=N$, so we have $K=M=N\cong \sqrt{2}L$.  Also, for $h=-1$, for all $v=(x,x)\in M, w=(y,-y)\in N$, we have the inner product $(v,w)=(x,y)-(x,y)=0$, so the lattices $M$ and $N$ are orthogonal and we have $K\cong \sqrt{2}L \perp \sqrt{2}L$.  Since all vectors in $M$ and $N$ have even norm, $K$ is an even lattice.

If $h$ and $g$ are conjugate in the isometry group, the corresponding lattices $K_h$ and $K_g$ are isometric.  To see this, suppose $g=k^{-1}hk$.  Since $k^{-1}$ is an isometry of $L$, we have $k^{-1}L=L$, so we can write
\begin{equation*}
M_g=\{ (k^{-1}x,k^{-1}x)\mid x\in L\}, N_g=\{(k^{-1}x,gk^{-1}x)\mid x\in L\}=\{(k^{-1}x,k^{-1}hx)\mid x\in L\}.
\end{equation*}
We can extend the isometry $k^{-1}$ of $L$ to an isometry $\phi := k^{-1} \perp k^{-1}$ of $L \perp L$, and we can see that $M_g=\phi(M_h)$ and $N_g=\phi(N_h)$.  Thus $K_g=\phi(K_h)$, and hence $K_g\cong K_h$.

\subsection{Some Theorems about Lattices}
We now quote several results about lattices which are used throughout this paper.  They are stated without proof for the sake of brevity.  Proofs of all of these results can be found in \cite{Griess}.
\begin{thm}[Index-determinant formula]\label{IndexDet}Let $L$ be a rational lattice, and $M$ a sublattice of $L$ of finite index $|L:M|$.  Then \begin{equation*}det(L)|L:M|^2=det(M).\end{equation*}
\end{thm}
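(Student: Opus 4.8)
The plan is to reduce everything to a single integer change-of-basis matrix and then invoke the Smith normal form, which the excerpt has already introduced. Since $M$ has finite index in $L$, the two lattices span the same rational subspace of $\mathbb{Q}\otimes L$ and therefore have the same rank $n$. I would fix a basis $\{v_1,\ldots,v_n\}$ of $L$ and a basis $\{w_1,\ldots,w_n\}$ of $M$. Because each $w_i$ lies in $M\subseteq L$, I can write $w_i=\sum_j A_{ij}v_j$ for a matrix $A=(A_{ij})$ with \emph{integer} entries.

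First I would record how the Gram matrices compare. Expanding $(w_i,w_j)$ bilinearly gives $G_M=A\,G_L\,A^{T}$, whence $\det(M)=\det(A)^2\det(L)$. This already isolates the whole problem into understanding the single quantity $|\det(A)|$, and in particular shows the formula will follow once I prove $|\det(A)|=|L:M|$.

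The crux—and the step I expect to be the main obstacle—is identifying $|\det(A)|$ with the index $|L:M|$. Here I would pass to coordinates via the isomorphism $\mathbb{Z}^n\to L$ sending $e_i\mapsto v_i$; under this map the image of $M$ is exactly the subgroup generated by the rows of $A$, so $L/M$ is isomorphic to the cokernel of $A$ (equivalently $\mathbb{Z}^n$ modulo the span of its rows, the transpose playing no role since $A$ and $A^{T}$ share a Smith normal form). Writing that Smith normal form as $S=PAQ$ with $P,Q$ unimodular and diagonal entries $d_1\mid\cdots\mid d_n$, the structure theorem recalled in the excerpt gives $\mathbb{Z}^n/A\mathbb{Z}^n\cong\mathbb{Z}/d_1\mathbb{Z}\oplus\cdots\oplus\mathbb{Z}/d_n\mathbb{Z}$. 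Hence $|L:M|=d_1\cdots d_n=|\det(S)|=|\det(A)|$, using that $\det(P),\det(Q)=\pm1$.

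Combining the relations $\det(M)=\det(A)^2\det(L)$ and $|\det(A)|=|L:M|$ finishes the argument at once. The only real subtlety is keeping the bookkeeping consistent—making sure $A$ is genuinely an integer matrix (which rests on $M\subseteq L$) and that the cokernel of $A$ computes the index on the nose—after which the Smith normal form does all the heavy lifting.
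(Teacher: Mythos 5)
Your proof is correct and is the standard argument: the relation $G_M = A\,G_L\,A^{T}$ gives $\det(M)=\det(A)^2\det(L)$, and the Smith normal form identifies $|\det(A)|$ with $|L:M|$ (with $\det(A)\neq 0$ guaranteed because finite index forces $M$ to have full rank). The paper itself does not prove this theorem --- it is quoted without proof and attributed to the reference \cite{Griess} --- so there is no in-paper argument to compare against, but your write-up is a complete and self-contained proof of exactly the kind that reference supplies.
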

\begin{defn}For integers $n\geq 1$ and real numbers $d\geq 0$, the Hermite function is given by
\begin{equation*}H(n,d):=(\tfrac{4}{3})^{\frac{n-1}{2}}\cdot d^{\frac{1}{n}}.\end{equation*}
For a lattice $L$, we define
\begin{equation*}\mu (L) := \textrm{min}\{(x,x)\mid x\in L, x\neq 0\}. \end{equation*}
\end{defn}
\begin{thm}[Hermite]\label{Hermite}If $L$ is a rational lattice of rank $n$, then $\mu (L) \leq H(n,det(L))$.
\end{thm}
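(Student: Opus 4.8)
\section*{Proof proposal}

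The plan is to prove the equivalent statement that the Hermite constant $\gamma_n := \sup_L \mu(L)/\det(L)^{1/n}$ (supremum over rational lattices of rank $n$) satisfies $\gamma_n \le (\tfrac43)^{(n-1)/2}$, which is exactly the asserted inequality $\mu(L) \le H(n,\det(L))$ once the definition of $H$ is unwound. I would argue by induction on the rank $n$. The base case $n=1$ is immediate: a rank-one lattice $L=\mathbb{Z}v$ has $\mu(L)=(v,v)=\det(L)=H(1,\det(L))$, so equality holds and $\gamma_1=1$.

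For the inductive step, suppose the bound holds in rank $n-1$. Let $L$ have rank $n$ and determinant $d$, and let $v_1\in L$ be a nonzero vector of minimal norm $\mu=(v_1,v_1)$. First I would note that $v_1$ is primitive, since $v_1=kw$ with $k\ge 2$ would force $(w,w)=\mu/k^2<\mu$, contradicting minimality. Let $\pi$ denote orthogonal projection onto $v_1^\perp$ and set $L':=\pi(L)$; because all inner products are rational, $L'$ is again a rational lattice, now of rank $n-1$. Extending $v_1$ to a basis of $L$ and running Gram--Schmidt shows that $\det(L)=\mu\cdot\det(L')$, so $\det(L')=d/\mu$. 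Applying the inductive hypothesis to $L'$ yields a nonzero $u'\in L'$ with $(u',u')\le(\tfrac43)^{(n-2)/2}(d/\mu)^{1/(n-1)}$. The crucial geometric step is to lift $u'$ to a well-chosen vector of $L$: any preimage has the form $u=u'+c\,v_1$, and replacing $u$ by $u-kv_1$ for a suitable integer $k$ brings the coefficient into the range $|c-k|\le\tfrac12$ while keeping $\pi(u)=u'\neq0$. The resulting $u$ is a nonzero lattice vector with
\begin{equation*}
(u,u)=(u',u')+(c-k)^2(v_1,v_1)\le(u',u')+\tfrac14\mu.
\end{equation*}

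Since $u\neq0$, minimality gives $\mu\le(u,u)\le(\tfrac43)^{(n-2)/2}(d/\mu)^{1/(n-1)}+\tfrac14\mu$, hence $\tfrac34\mu\le(\tfrac43)^{(n-2)/2}(d/\mu)^{1/(n-1)}$. Clearing the power of $\mu$ and raising to the $(n-1)/n$ power collapses directly to $\mu\le(\tfrac43)^{(n-1)/2}d^{1/n}=H(n,\det(L))$; equivalently, in the $\gamma$-language one obtains the recursion $\gamma_n\le(\tfrac43)^{(n-1)/n}\gamma_{n-1}^{(n-1)/n}$, and substituting $\gamma_{n-1}\le(\tfrac43)^{(n-2)/2}$ with the exponent simplification $\tfrac{n-1}{n}+\tfrac{(n-2)(n-1)}{2n}=\tfrac{n-1}{2}$ closes the induction. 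The main obstacle — and the only genuinely non-formal point — is the lifting step together with the determinant bookkeeping: one must verify both that projecting along a primitive minimal vector yields an honest rank-$(n-1)$ lattice of determinant $d/\mu$, and that the $v_1$-component of the lift can always be reduced to squared length at most $\mu/4$. Once those two facts are in hand, the remainder is routine algebraic manipulation of the recursion.
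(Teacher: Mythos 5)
Your argument is correct, and it is the classical Hermite induction. Note that the paper itself offers no proof to compare against: Theorem~\ref{Hermite} is one of the results ``stated without proof for the sake of brevity'' with a citation to \cite{Griess}, so you have supplied a proof where the paper supplies none. All of the delicate points are handled properly: the minimal vector $v_1$ is primitive, so $L/\mathbb{Z}v_1$ is free of rank $n-1$ and the projection $L'=\pi(L)$ onto $v_1^{\perp}$ is an honest rational lattice with $\det(L)=\mu\cdot\det(L')$ (Gram--Schmidt on a basis extending $v_1$); the lift $u=u'+(c-k)v_1$ with $|c-k|\le\tfrac12$ is nonzero because $\pi(u)=u'\neq 0$, giving $(u,u)\le (u',u')+\tfrac14\mu$; and the exponent bookkeeping $\mu^{n/(n-1)}\le(\tfrac43)^{n/2}d^{1/(n-1)}$ does collapse to $\mu\le(\tfrac43)^{(n-1)/2}d^{1/n}=H(n,\det(L))$, matching the recursion $\gamma_n\le\bigl(\tfrac43\gamma_{n-1}\bigr)^{(n-1)/n}$ you state. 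I see no gap.
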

\begin{thm}\label{Unimodular}Let $L$ be an integral lattice of rank no more than $8$ and determinant $1$.  Then $L\cong \mathbb{Z}^n$ or $L\cong E_8$.
\end{thm}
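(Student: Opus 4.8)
The plan is to induct on the rank $n$, using repeatedly the observation that a vector of norm $1$ splits off an orthogonal copy of $\mathbb{Z}$. Hermite's theorem (Theorem~\ref{Hermite}) will tell us exactly when such a vector is forced to exist, and will pin the minimum $\mu(L)$ down to the two values $1$ and $2$; the value $2$ is where the lattice $E_8$ enters.

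First I would record the splitting step. Suppose $v\in L$ has $(v,v)=1$. Since $L$ is integral, $(x,v)\in\mathbb{Z}$ for every $x\in L$, so $x-(x,v)v$ lies in $L$ and is orthogonal to $v$; hence $L=\mathbb{Z}v\perp(v^{\perp}\cap L)$. Because the form is block-diagonal across this orthogonal sum, $\det L=\det(\mathbb{Z}v)\cdot\det(v^{\perp}\cap L)$, so the complement $L':=v^{\perp}\cap L$ is again integral of determinant $1$, now of rank $n-1$. By the inductive hypothesis $L'\cong\mathbb{Z}^{n-1}$ or $L'\cong E_8$; the latter is impossible once $n-1\le 7$, so we obtain $L\cong\mathbb{Z}^n$. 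It therefore remains to locate the norm-$1$ vector. Applying Hermite with $d=\det L=1$ gives $\mu(L)\le H(n,1)=(\tfrac43)^{(n-1)/2}$, which is $<2$ for $n\le 5$ and $<3$ for $n\le 8$. As $L$ is integral and positive definite, $\mu(L)$ is a positive integer, so $\mu(L)\in\{1,2\}$ for all $n\le 8$, with $\mu(L)=2$ possible only when $6\le n\le 8$. When $\mu(L)=1$ the splitting step finishes the argument, so the whole theorem reduces to the single case $\mu(L)=2$.

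For that case I would argue that $L$ must be even and then quote the structure theory of even unimodular lattices. Let $L_0$ be the even sublattice; it has index $1$ or $2$ in $L$, and parity considerations (via the homomorphism $x\mapsto(x,x)\bmod 2$ and the characteristic vector $w\in L$ it determines through $(w,x)\equiv(x,x)\bmod 2$) rule out the odd possibility in these ranks, so $L$ is even. Once $L$ is even unimodular, the congruence $(w,w)\equiv\operatorname{rank}(L)\pmod 8$ applied to the characteristic vector $w=0$ shows $8\mid n$; combined with $n\le 8$ this eliminates $n=6,7$ outright and forces $n=8$. Finally, a rank-$8$ even unimodular lattice has $\mu=2$ (even, and $\le 3$ by Hermite), and its norm-$2$ vectors form a root system of rank $8$; identifying this system as $E_8$ and noting that the root lattice $E_8$ is already unimodular gives, by Theorem~\ref{IndexDet}, the equality $L\cong E_8$. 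These even-lattice facts are exactly the results collected in \cite{Griess}.

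The routine part of this argument is entirely mechanical: the norm-$1$ splitting, the induction, and the Hermite estimate that corners $\mu(L)$ into $\{1,2\}$. The genuine obstacle is the case $\mu(L)=2$, and it is non-elementary in two places: showing that such a lattice is forced to be even (and hence, by the signature-mod-$8$ congruence, of rank divisible by $8$), and proving the uniqueness of the rank-$8$ even unimodular lattice as $E_8$ via the classification of its root system. Both rely on the characteristic-vector/signature machinery rather than on Hermite alone, and I would expect to lean on \cite{Griess} for them, as the present theorem is itself being quoted from there.
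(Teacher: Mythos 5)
The paper does not actually prove this theorem: it appears in the ``Some Theorems about Lattices'' subsection among results ``stated without proof for the sake of brevity,'' with all proofs deferred to \cite{Griess}. So there is no in-paper argument to compare yours against, and I can only assess the proposal on its own terms.

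Your reduction is correct and is the standard opening: a norm-$1$ vector splits off an orthogonal $\mathbb{Z}$ summand whose orthogonal complement is again integral of determinant $1$, and Hermite's bound $\mu(L)\le(\tfrac43)^{(n-1)/2}$ forces $\mu(L)\in\{1,2\}$ for $n\le 8$, with $\mu(L)=2$ possible only for $6\le n\le 8$. The genuine gap is the very next step: the assertion that $\mu(L)=2$ forces $L$ to be even. This is the heart of the theorem (equivalently, that every odd unimodular lattice of rank at most $8$ contains a norm-$1$ vector), and the mechanism you name --- ``parity considerations'' via a characteristic vector $w$ with $(w,x)\equiv(x,x)\pmod 2$ --- does not deliver it. The only thing that machinery gives for free is the congruence $(w,w)\equiv n\pmod 8$, which is perfectly consistent with an odd positive definite unimodular lattice of rank $6$, $7$, or $8$ having minimum $2$: a characteristic vector of norm $6$, $7$, or $8$, respectively, is not excluded by any parity argument. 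Ruling this case out genuinely requires more --- a root-sublattice analysis, the mass formula, or an Elkies-type bound on the minimal norm of a characteristic vector. You do flag this step (together with the uniqueness of the rank-$8$ even unimodular lattice) as something you would quote from \cite{Griess}; since the paper quotes the entire theorem from that source, that is a defensible stopping point, but as written the proposal is a correct reduction plus citations for the two substantive facts rather than a proof. The other quoted ingredients (the rank $\equiv 0\pmod 8$ congruence for even unimodular positive definite lattices, and the identification of the rank-$8$ even unimodular lattice with $E_8$ via its root system) are at least stated accurately.
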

\begin{thm}\label{E7uniqueness}Let $L$ be an integral lattice of rank $7$ and determinant $2$.  Then $L$ is rectangular or $L\cong E_7$.
\end{thm}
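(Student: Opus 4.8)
The plan is to realize $L$ as the orthogonal complement of a norm-$2$ vector inside a rank-$8$ unimodular lattice and then read off the two possibilities from Theorem \ref{Unimodular}. First I would pin down what the rectangular case can be: a rectangular lattice of rank $7$ and determinant $2$ has a diagonal Gram matrix whose entries are positive integers with product $2$, so up to reordering they are $2,1,1,1,1,1,1$. In other words the rectangular alternative is exactly $L\cong A_1\perp\mathbb{Z}^6$, and the theorem amounts to showing that $L$ is isometric to $A_1\perp\mathbb{Z}^6$ or to $E_7$. The strategy is to glue $L$ to an auxiliary copy of $A_1$ so as to build a rank-$8$ determinant-$1$ lattice $U$ and then apply the classification.

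The central construction runs as follows. Since $\det L=2$ we have $\mathcal{D}(L)=L^*/L\cong\mathbb{Z}/2\mathbb{Z}$; let $g\in L^*$ generate the nontrivial coset, so $2g\in L$. The first key step is to compute $(g,g)\bmod\mathbb{Z}$. Because $2g\in L$ and $L$ is integral, $2(g,g)=(2g,g)\in\mathbb{Z}$, so $(g,g)\in\tfrac12\mathbb{Z}$ and $(g,g)\bmod\mathbb{Z}\in\{0,\tfrac12\}$. If $(g,g)\in\mathbb{Z}$, then $L^*=L+\mathbb{Z}g$ would itself be integral; but Theorem \ref{IndexDet} gives $\det(L^*)=\det(L)/[L^*:L]^2=2/4=\tfrac12\notin\mathbb{Z}$, which is impossible for an integral lattice. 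Hence $(g,g)\equiv\tfrac12\pmod{\mathbb{Z}}$. Now adjoin an auxiliary copy $A_1=\langle f\rangle$ with $(f,f)=2$, orthogonal to $L$, set $w:=g+\tfrac12 f$, and define $U:=(L\perp A_1)+\mathbb{Z}w$. One checks $(w,\ell)=(g,\ell)\in\mathbb{Z}$ for $\ell\in L$, $(w,f)=1$, and $(w,w)=(g,g)+\tfrac12\in\mathbb{Z}$, so $U$ is integral; since $2w\in L\perp A_1$ we have $[U:L\perp A_1]=2$, and Theorem \ref{IndexDet} yields $\det U=\det(L\perp A_1)/4=4/4=1$. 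Thus $U$ is a rank-$8$ integral unimodular lattice.

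To finish I would apply Theorem \ref{Unimodular}, giving $U\cong\mathbb{Z}^8$ or $U\cong E_8$, and recover $L$ as $f^{\perp}\cap U$. Writing a general element of $U$ as $\ell+af+\epsilon w$ with $\ell\in L$, $a\in\mathbb{Z}$, $\epsilon\in\{0,1\}$, the orthogonality condition $(\ell+af+\epsilon w,f)=2a+\epsilon=0$ forces $a=\epsilon=0$, so $f^{\perp}\cap U=L$. Since $f$ is a primitive norm-$2$ vector of $U$ (it cannot be a nontrivial multiple, as that would produce a vector of norm below $1$), it remains to identify the complement of a root in each candidate: in $\mathbb{Z}^8$ every norm-$2$ vector is $\pm e_i\pm e_j$, whose complement is visibly $A_1\perp\mathbb{Z}^6$, yielding the rectangular case, while in $E_8$ the complement of a root is $E_7$ (using transitivity of $O(E_8)$ on roots and the standard decomposition $E_7\perp A_1\subset E_8$; see \cite{Griess}), yielding $L\cong E_7$. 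The main obstacle is the discriminant computation establishing that the glue with $A_1$ is possible — namely $(g,g)\equiv\tfrac12$, which is exactly what makes $U$ integral — together with the clean identification $f^{\perp}\cap U=L$ and the two complement computations; of these, the fact that a root's complement in $E_8$ is $E_7$ is the one piece genuinely imported from the theory of root lattices. (One could instead start from Theorem \ref{Hermite}, since $\mu(L)\le H(7,2)<3$ forces $\mu(L)\in\{1,2\}$ and a norm-$1$ vector splits off a $\mathbb{Z}$ summand, but peeling this way still leaves low-rank minimum-$2$ cores that must be classified through the same unimodular result, so the gluing argument is more direct.)
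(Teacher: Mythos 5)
Your proof is correct. Note that the paper does not actually prove Theorem \ref{E7uniqueness}: it is quoted without proof from \cite{Griess}, so there is no in-paper argument to compare against. Your gluing construction is a standard and fully valid route. The two points that genuinely need care are both handled: first, the discriminant computation showing $(g,g)\equiv\tfrac12\pmod{\mathbb{Z}}$ (ruled out the integral alternative via $\det(L^*)=\tfrac12$, which is impossible since an integral lattice has an integer Gram matrix and hence integer determinant), which is exactly what makes the glue vector $w=g+\tfrac12 f$ produce an integral unimodular $U$ of rank $8$; and second, the verification that $f^{\perp}\cap U=L$ rather than something larger, which your coset computation $2a+\epsilon=0\Rightarrow a=\epsilon=0$ settles. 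The identification of $\mathrm{ann}_{\mathbb{Z}^8}(\pm e_i\pm e_j)$ as $A_1\perp\mathbb{Z}^6$ is immediate, and the fact that the annihilator of a root in $E_8$ is $E_7$ (together with transitivity of $O(E_8)$ on roots) is the one external input; it is reasonable to cite \cite{Griess} for it, and it can also be checked by the same coset argument you used for $f^{\perp}\cap U=L$ applied to the glue decomposition $E_8=(E_7\perp A_1)+\mathbb{Z}v$. Your parenthetical remark is also apt: the alternative route via Theorem \ref{Hermite} and splitting off norm-$1$ vectors eventually funnels back into the same unimodular classification, so the gluing argument is the cleaner choice.
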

\begin{thm}\label{E6uniqueness}Let $L$ be an integral lattice of rank $6$ and determinant $3$.  Then $L$ is rectangular, $L\cong A_2 \perp \mathbb{Z}^4$, or $L\cong E_6$.
\end{thm}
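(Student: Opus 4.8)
The plan is to combine the Hermite bound (Theorem \ref{Hermite}) with a reduction that strips off unit vectors, and then to classify the remaining unit-free core by its root sublattice.

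First I would bound the minimum. Since $\det(L)=3$ and $\mathrm{rank}(L)=6$, Theorem \ref{Hermite} gives $\mu(L)\le H(6,3)<3$, so, $L$ being integral, $\mu(L)\in\{1,2\}$. If $L$ contains a vector $v$ of norm $1$, then $v$ is primitive and, for every $w\in L$, the vector $w-(w,v)v$ lies in $v^{\perp}$; hence $L=\langle v\rangle\perp v^{\perp}$ splits off a copy of $\mathbb{Z}$. Iterating, $L\cong\mathbb{Z}^{k}\perp L'$ where $L'$ is integral with $\mu(L')\ge 2$ and $\det(L')=3$ (from the orthogonal splitting), and $m:=\mathrm{rank}(L')=6-k$. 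Because $\det(L')=3\ne 1$ we have $m\ge 1$.

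Next I would run Hermite on $L'$ rank by rank. For $m=1$, $L'\cong\langle 3\rangle$ and $L$ is rectangular. For $m=2$, $H(2,3)=2$ forces $\mu(L')=2$, and a direct reduction of the rank-$2$ Gram matrix shows the only integral lattice of determinant $3$ and minimum $2$ is $A_2$, giving $L\cong A_2\perp\mathbb{Z}^4$. For $m=3$ a contradiction comes for free, since $H(3,3)=4\cdot 3^{-2/3}<2$ while $\mu(L')\ge 2$; so no rank-$3$ core occurs. This isolates the real content in $m\in\{4,5,6\}$, where Hermite yields only $\mu(L')=2$, so $L'$ has roots but no unit vectors.

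For these cases I would study the sublattice $R\subseteq L'$ generated by all roots; it is an orthogonal sum of $ADE$-lattices. Suppose first that $R$ has full rank $m$. Then $R\subseteq L'\subseteq R^{*}$, and the glue group $L'/R\subseteq R^{*}/R$ must be one on which the discriminant bilinear form is integer-valued (so that $L'$ is integral) and no nonzero coset of which has minimal norm $\le 2$ (a norm-$2$ coset vector would be a root outside $R$, and norm $1$ is excluded). When $R$ has full rank, Theorem \ref{IndexDet} gives $\det(R)=3\,[L':R]^{2}$, so $v_3(\det R)$ is odd and $v_p(\det R)$ is even for $p\ne 3$; reading these valuations off the component determinants ($\det A_n=n+1$, $\det D_n=4$, $\det E_6=3$) cuts the admissible root systems to a short list (for $m=6$, e.g.\ $E_6$, $A_5\perp A_1$, $A_2\perp D_4$, $A_2^{3}$, $A_1^{4}\perp A_2$, \dots). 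For each survivor I would compute the minimal norms of the relevant cosets of $R^{*}/R$: every admissible root system other than $E_6$ either admits no integer-valued glue of the required index or forces a glue vector of norm $\le 2$, both impossible. Only $R=E_6$ survives, and then $[L':R]^2=\det(R)/3=1$, so $L'=E_6$ (and $k=0$), completing the classification.

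The main obstacle is this last step: organizing the root-sublattice enumeration for $m=4,5,6$ and verifying the glue conditions, together with the companion subcase in which the roots of $L'$ fail to span. In that subcase one sets $P:=L'\cap(\mathbb{R}R)^{\perp}$, a lattice of rank $m-\mathrm{rank}(R)$ containing no roots, whence $\mu(P)\ge 3$; Hermite then forces $\det(P)$ to be large, while the determinant identity $\det(R)\det(P)=3\,[L':R\perp P]^{2}$ and the compatibility constraints coming from the embedding $L'/(R\perp P)\hookrightarrow R^{*}/R\times P^{*}/P$ leave no room. Making all of these inequalities and glue computations fit together cleanly—rather than any single conceptual difficulty—is where the work lies.
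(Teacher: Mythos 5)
The paper does not actually prove this statement: Theorem~\ref{E6uniqueness} is one of the results quoted without proof from \cite{Griess}, so there is no in-paper argument to compare yours against. Judged on its own terms, your strategy is the standard one and the parts you execute are correct: $H(6,3)<3$ forces $\mu(L)\le 2$; splitting off norm-one vectors yields $L\cong\mathbb{Z}^k\perp L'$ with $\mu(L')\ge 2$ and $\det(L')=3$; the cases $m=1,2$ give the rectangular and $A_2\perp\mathbb{Z}^4$ conclusions; $H(3,3)<2$ kills $m=3$; and in the full-rank root case for $m=6$ the valuation condition $\det(R)=3[L':R]^2$ really does cut the list to $E_6$, $A_5\perp A_1$, $D_4\perp A_2$, $A_2^{3}$, $A_2\perp A_1^{4}$, each of which except $E_6$ is excluded exactly as you indicate (every admissible glue class has minimal norm $1$ or $2$).

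The genuine gap is that the decisive enumeration is announced rather than carried out. You write ``I would compute the minimal norms of the relevant cosets'' and ``the compatibility constraints \dots leave no room,'' but for $m=4,5$ (where no full-rank root sublattice survives, so the non-spanning case is the \emph{only} case) and for the non-spanning subcase of $m=6$, nothing is actually verified. That subcase also hides a subtlety your sketch glosses over: the glue group $L'/(R\perp P)$ need not project injectively to $R^*/R$, because $L'\cap\mathbb{R}R$ may strictly contain $R$; one must work with the primitive closure $R':=L'\cap\mathbb{R}R$ (an integral overlattice of $R$ all of whose nontrivial cosets have minimal norm $\ge 3$) before the index--determinant and embedding constraints take the clean form you invoke. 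None of this appears unworkable --- the inequalities $\mu(P)\ge 3$ plus Theorem~\ref{Hermite} and Theorem~\ref{IndexDet} do close every case --- but until those finitely many cases are written out, what you have is a correct plan for a proof, not a proof.
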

\begin{thm}\label{Rank3}Let $J$ be a rank $3$ integral lattice.  If $det(J)\in \{1,2,3\}$, then $J$ is rectangular or $J$ is isometric to $\mathbb{Z}\perp A_2$.  If $det(J)=4$, $J$ is rectangular or is isometric to $A_3$.
\end{thm}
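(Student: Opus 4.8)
The plan is to bound the minimum $\mu(J)$ by Hermite's theorem (Theorem~\ref{Hermite}) and then repeatedly peel off short vectors to reduce the rank, handling the residual cases by explicit reduced Gram matrices. The case $\det(J)=1$ is immediate from Theorem~\ref{Unimodular}, which gives $J\cong\mathbb{Z}^3$, hence rectangular. For the other determinants the workhorse is the fact that a norm-$1$ vector splits off orthogonally: if $v\in J$ has $(v,v)=1$, then for every $w\in J$ we have $(w,v)\in\mathbb{Z}$, so $w-(w,v)v\in J$ lies in $v^\perp$, whence $J=\mathbb{Z}v\perp(v^\perp\cap J)$. By multiplicativity of the determinant over orthogonal sums, the complement $v^\perp\cap J$ is an integral rank-$2$ lattice of the same determinant as $J$.

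First I would record the Hermite bounds. For $\det(J)\in\{2,3\}$ one checks $H(3,\det(J))<2$, so $\mu(J)=1$ and $J=\mathbb{Z}\perp K$ with $K$ integral of rank $2$ and determinant $\det(J)$; I then rerun the analysis in rank $2$. Since $H(2,2)<2$, the lattice $K$ again has a norm-$1$ vector, so $K=\mathbb{Z}\perp\langle 2\rangle$ and $J$ is rectangular, disposing of $\det(J)=2$. For $\det(J)=3$ one has $H(2,3)=2$, so either $\mu(K)=1$, giving $K=\mathbb{Z}\perp\langle 3\rangle$ and $J$ rectangular, or $\mu(K)=2$. In the latter case I would take a reduced basis: a minimal vector $v_1$ of norm $2$ and a second basis vector $v_2$ with $|(v_1,v_2)|\le 1$, so the Gram matrix is $\left(\begin{smallmatrix}2&a\\a&c\end{smallmatrix}\right)$ with $2c-a^2=3$ and $c\ge 2$. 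The only integral solution is $a=\pm1,\ c=2$, i.e.\ $K\cong A_2$, so $J\cong\mathbb{Z}\perp A_2$.

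The determinant-$4$ case splits according to $\mu(J)$, since $H(3,4)\approx 2.12$ permits $\mu(J)\in\{1,2\}$. If $\mu(J)=1$ I split off a norm-$1$ vector to obtain a rank-$2$ determinant-$4$ lattice $K$; the estimate $H(2,4)<3$ leaves the sub-cases $\mu(K)=1$ (immediately rectangular) and $\mu(K)=2$, where the reduced matrix $\left(\begin{smallmatrix}2&a\\a&c\end{smallmatrix}\right)$ with $2c-a^2=4$ forces $a=0,\ c=2$ (the value $|a|=1$ being excluded since $2c=5$ has no solution), so $K=\langle2\rangle\perp\langle2\rangle$ and $J$ is again rectangular.

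The heart of the argument, and what I expect to be the main obstacle, is the case $\mu(J)=2$ with $\det(J)=4$, where no norm-$1$ vector is available to drop the rank. Here I would pass to a Minkowski-reduced basis $\{v_1,v_2,v_3\}$ with $2\le(v_1,v_1)\le(v_2,v_2)\le(v_3,v_3)$ and invoke the standard reduction-theoretic bound on the product of the diagonal entries of a reduced ternary form, which with $\det(J)=4$ forces all three diagonal entries to equal $2$. Writing the off-diagonal entries as $d,e,f$ with $|d|,|e|,|f|\le 1$, the determinant condition becomes $d^2+e^2+f^2-def=2$, whose only solutions over $\{-1,0,1\}$ have exactly two of $d,e,f$ nonzero, or all three nonzero with $def=1$. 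The remaining task is to check that every such Gram matrix is isometric to that of $A_3$: for the two-nonzero patterns this follows after sign changes of basis vectors, and for the all-nonzero patterns an explicit base change such as $\{v_1,v_2-v_1,v_3-v_2\}$ converts the form into the standard $A_3$ Gram matrix. This identification of the finitely many reduced forms with a single known lattice is the only genuinely delicate step; everything else is bounded, finite case-checking driven by Hermite's inequality.
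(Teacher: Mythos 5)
Your proof is correct. Note, however, that the paper itself gives no proof of Theorem~\ref{Rank3}: it is one of the results quoted without proof in the ``Some Theorems about Lattices'' subsection, with a citation to \cite{Griess}, so there is no in-paper argument to compare yours against. On its own merits, your argument is sound: the Hermite computations $H(3,2)<2$, $H(3,3)<2$, $H(2,2)<2$, $H(2,3)=2$, $H(3,4)<3$, $H(2,4)<3$ are all right, the orthogonal splitting off of a norm-$1$ vector is standard, and the rank-$2$ reduced-form case checks ($2c-a^2=3$ forcing $A_2$, $2c-a^2=4$ forcing $\langle 2\rangle\perp\langle 2\rangle$) are complete. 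In the crucial case $\det(J)=4$, $\mu(J)=2$, your determinant identity $d^2+e^2+f^2-def=2$ is correct, the enumeration of solutions in $\{-1,0,1\}$ is exhaustive, and both the sign-change reduction of the two-nonzero patterns and the base change $\{v_1,v_2-v_1,v_3-v_2\}$ for the pattern $d=e=f=1$ do produce the standard $A_3$ Gram matrix (sign changes preserve $def$ and flip exactly two of $d,e,f$, so every $def=1$ pattern reduces to $d=e=f=1$). The one ingredient you import from outside the paper's toolkit is the Minkowski reduction bound $a_{11}a_{22}a_{33}\le 2\det(J)$ for reduced ternary forms; this is a genuine classical theorem (sharp here, since $A_3$ attains equality), but since the paper proves nothing in this section and defers to \cite{Griess}, relying on a standard reduction-theory fact is entirely in keeping with the level of the source.
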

\begin{thm}\label{Rank4}Suppose that $X$ is an integral lattice which has rank $4$ and determinant $4$.  Then $X$ embeds with index $2$ in $\mathbb{Z}^4$.  If $X$ is odd, $X$ is isometric to one of $2\mathbb{Z} \perp \mathbb{Z}^3, A_1 \perp A_1 \perp \mathbb{Z}^2, A_3 \perp \mathbb{Z}$.  If $X$ is even, $X\cong D_4$.
\end{thm}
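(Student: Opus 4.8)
The plan is to prove the embedding claim first and then read off the classification from the structure of index-$2$ sublattices of $\mathbb{Z}^4$. Concretely, I would first produce a unimodular overlattice $L \supseteq X$ with $[L:X] = 2$; by the index-determinant formula (Theorem \ref{IndexDet}) such an $L$ has $\det L = \det(X)/[L:X]^2 = 4/4 = 1$, and since $L$ is integral of rank $4$, Theorem \ref{Unimodular} forces $L \cong \mathbb{Z}^4$ (it cannot be $E_8$ for rank reasons). This reduces the entire theorem to (i) constructing the glue vector and (ii) enumerating the index-$2$ sublattices of $\mathbb{Z}^4$.

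For (i), the discriminant group $\mathcal{D}(X) = X^*/X$ has order $\det X = 4$, so it is either $\mathbb{Z}/4\mathbb{Z}$ or $(\mathbb{Z}/2\mathbb{Z})^2$. I seek a coset $\bar g \in \mathcal{D}(X)$ of order $2$ whose representative $g \in X^*$ satisfies $(g,g) \in \mathbb{Z}$; then $L := X + \mathbb{Z}g$ is integral (the only new inner products are $(x+g, y+g) = (x,y) + (x,g) + (g,y) + (g,g)$, all integral since $g \in X^*$), has index $2$ over $X$, and is the desired overlattice. To find such $\bar g$ I argue by cases. If $\mathcal{D}(X) \cong \mathbb{Z}/4\mathbb{Z}$ with generator $\bar u$, take $g = 2u$: then $2g = 4u \in X$ makes $\bar g$ a nonzero class of order $2$, and $(g,g) = (u, 4u) \in \mathbb{Z}$ because $u \in X^*$ and $4u \in X$. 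If $\mathcal{D}(X) \cong (\mathbb{Z}/2\mathbb{Z})^2$, then every nonzero class $\bar a$ has $2a \in X$, whence $2(a,a) = (a,2a) \in \mathbb{Z}$ and the well-defined value $q(\bar a) := (a,a) \bmod \mathbb{Z}$ lies in $\{0, \tfrac12\}$; moreover the cross term in $q(\bar a + \bar b) = q(\bar a) + q(\bar b) + 2(a,b) \bmod \mathbb{Z}$ vanishes because $2(a,b) = (a,2b) \in \mathbb{Z}$. Thus $q$ is additive on the three nonzero classes, and they cannot all equal $\tfrac12$ (that would force $q(\bar a+\bar b)=\tfrac12+\tfrac12=0$ while also equalling $\tfrac12$). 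So some nonzero $\bar g$ has $q(\bar g)=0$, i.e. $(g,g)\in\mathbb{Z}$, as needed. I expect this discriminant-form computation — producing an order-$2$ glue vector of integral norm — to be the main obstacle.

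For (ii), an index-$2$ sublattice of $\mathbb{Z}^4$ is the kernel of a nonzero homomorphism $\mathbb{Z}^4 \to \mathbb{Z}/2\mathbb{Z}$, that is, $X_S = \{x : \sum_{i \in S} x_i \equiv 0 \bmod 2\}$ for a nonempty $S \subseteq \{1,2,3,4\}$. Since $O(\mathbb{Z}^4)$ contains all signed permutations and sign changes act trivially modulo $2$, these fall into exactly four isometry classes indexed by $|S| \in \{1,2,3,4\}$. I would then identify each by exhibiting an explicit orthogonal basis of the correct determinant: $|S|=1$ gives $2\mathbb{Z}\perp\mathbb{Z}^3$; $|S|=2$ gives $\langle e_1+e_2\rangle \perp \langle e_1 - e_2\rangle \perp \langle e_3\rangle \perp \langle e_4\rangle \cong A_1 \perp A_1 \perp \mathbb{Z}^2$; $|S|=3$ gives $D_3 \perp \mathbb{Z} \cong A_3 \perp \mathbb{Z}$; and $|S|=4$ gives $D_4$. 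Finally, the first three contain a norm-$1$ coordinate vector and are therefore odd, while $D_4$ is even, so $X$ odd yields one of the first three and $X$ even yields $D_4$, completing the proof.
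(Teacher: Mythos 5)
The paper does not actually prove this statement: Theorem \ref{Rank4} appears in the list of quoted results that are ``stated without proof for the sake of brevity,'' with a citation to \cite{Griess}. So there is no in-paper argument to compare against, and your proposal must stand on its own --- which it does. The logic is sound at every step: the reduction via Theorem \ref{IndexDet} and Theorem \ref{Unimodular} (with $E_8$ excluded by rank) is correct; the glue-vector construction is correct in both cases of $\mathcal{D}(X)$, and your additivity argument for $q$ on $(\mathbb{Z}/2\mathbb{Z})^2$ is a clean direct proof of exactly the special case of the paper's Theorem \ref{2group} that is needed here (you could alternatively have invoked that theorem with $W = X^*$, $r=2$ to produce the integral-norm coset). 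The only point you gloss over is the well-definedness of $q(\bar a) = (a,a) \bmod \mathbb{Z}$, which follows because replacing $a$ by $a+x$ with $x \in X$ changes the norm by $2(a,x)+(x,x) \in \mathbb{Z}$; this is routine but worth a line. The enumeration of index-$2$ sublattices of $\mathbb{Z}^4$ as kernels of characters $\mathbb{Z}^4 \to \mathbb{Z}/2\mathbb{Z}$, the reduction to $|S|$ under signed permutations, and the explicit identifications ($2\mathbb{Z}\perp\mathbb{Z}^3$, $A_1^2\perp\mathbb{Z}^2$, $D_3\perp\mathbb{Z} \cong A_3\perp\mathbb{Z}$, $D_4$) are all correct, and the parity dichotomy at the end is immediate. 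This is a complete and self-contained proof of a result the paper merely cites.
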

\begin{thm}\label{2group}Suppose that $X$ is an integral lattice which has rank $m \geq 1$ and there exists a lattice $W$, so that $X \leq W \leq X^*$ and $W/X \cong 2^r$, for some integer $r \geq 1$. Suppose further that every nontrivial coset of $X$ in $W$ contains a vector with noninteger norm. Then $r = 1$.
\end{thm}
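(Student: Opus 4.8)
The plan is to reinterpret the hypothesis as a statement about a single $\mathbb{F}_2$-linear function on the finite vector space $W/X$, and then reduce the conclusion to a triviality in linear algebra. Throughout I read $W/X \cong 2^r$ as meaning that $W/X$ is elementary abelian of rank $r$, so that $2w \in X$ for every $w \in W$; this exponent-$2$ property is exactly what makes the argument run. The central object will be the \emph{norm mod $\mathbb{Z}$} function $q \colon W/X \to \mathbb{Q}/\mathbb{Z}$ defined by $q(w + X) := (w,w) + \mathbb{Z}$.

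First I would check that $q$ is well-defined. Replacing $w$ by $w + x$ with $x \in X$ changes the norm by $2(w,x) + (x,x)$, and this is an integer because $w \in W \leq X^*$ forces $(w,x) \in \mathbb{Z}$ while integrality of $X$ forces $(x,x) \in \mathbb{Z}$. Thus the norm modulo $\mathbb{Z}$ is constant on each coset, so a nontrivial coset contains a vector of noninteger norm if and only if $q$ is nonzero on that coset. Consequently the hypothesis translates precisely into the assertion that $\ker q = \{0\}$, i.e. $q$ is injective.

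The crux is to show that $q$ is a group homomorphism whose image lies in the $2$-torsion $\{0, \tfrac12\}$ of $\mathbb{Q}/\mathbb{Z}$. For additivity I expand $q(v+w)$ from $(v+w,v+w) = (v,v) + 2(v,w) + (w,w)$ and observe that the cross term $2(v,w)$ is an integer: since $v \in W$ gives $2v \in X$ and $w \in X^*$, the pairing $(2v,w) = 2(v,w)$ is integral. Hence $q(v+w) = q(v) + q(w)$ in $\mathbb{Q}/\mathbb{Z}$. That the image is $2$-torsion follows immediately from $2\,q(w) = q(2w) = q(0) = 0$, using $2w \in X$. This integrality of the cross term is the main obstacle — or rather the single observation that powers everything — because it is what simultaneously makes $q$ additive and collapses its image down to two elements.

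Finally I would assemble these facts: $q$ is an injective $\mathbb{F}_2$-linear map from $W/X \cong \mathbb{F}_2^{\,r}$ into the one-dimensional space $\{0,\tfrac12\} \cong \mathbb{F}_2$. Injectivity into a one-dimensional target forces $r \leq 1$, and combined with the standing hypothesis $r \geq 1$ this yields $r = 1$, completing the proof. I expect the only subtlety worth flagging explicitly is the interpretation of $W/X \cong 2^r$ as elementary abelian, since the relation $2w \in X$ is used twice and the statement genuinely fails for quotients of higher exponent.
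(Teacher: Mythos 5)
Your proof is correct. Note that the paper does not actually prove Theorem \ref{2group} — it is quoted without proof with a reference to \cite{Griess} — but your argument, showing that the norm modulo $\mathbb{Z}$ descends to a well-defined homomorphism $q\colon W/X\to\mathbb{Q}/\mathbb{Z}$ with image in $\{0,\tfrac12\}$ (both facts resting on the integrality of the cross term $2(v,w)=(2v,w)$ with $2v\in X$, $w\in X^*$) and that the hypothesis makes $q$ injective, hence $2^r\le 2$, is the standard proof of this fact.
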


\section{General Results}\label{General}
The following are some general theorems pertaining to the construction defined above.  Our first result gives a complete characterization of the lattices obtained from $\mathbb{Z}^n$ when we choose the isometry $h$ to act as a permutation on the standard basis $\{e_1,\ldots,e_n\}$.
\begin{thm} \label{ncyclethm} Let $L=\mathbb{Z}^n$ and choose $h$ to act as an $n$-cycle on the standard basis. Then the resulting lattice $K$ is isometric to $A_{2n-1}$.\end{thm}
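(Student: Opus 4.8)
The plan is to exhibit an explicit basis of $K$ whose Gram matrix is the Cartan matrix of $A_{2n-1}$, producing an isometry directly rather than appealing to a classification theorem (the excerpt supplies uniqueness results only through rank $7$). Write the $n$-cycle as $he_i = e_{i+1}$ with indices read modulo $n$, and set $f_i := (e_i,0)$ and $g_i := (0,e_i)$, so that $\{f_1,\ldots,f_n,g_1,\ldots,g_n\}$ is an orthonormal basis of $\mathbb{R}^{2n}$. Then $K$ is generated by the $2n$ vectors $a_i := (e_i,e_i) = f_i + g_i$ and $b_i := (e_i,he_i) = f_i + g_{i+1}$.

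First I would record the inner products. Each $a_i$ and each $b_i$ has norm $2$; the $a_i$ are mutually orthogonal, as are the $b_i$; and a short computation gives $(a_i,b_j) = \delta_{ij} + \delta_{i,j+1}$, so that $(a_i,b_j)=1$ precisely when $j \in \{i,i-1\}$ and $0$ otherwise. Replacing $b_i$ by $c_i := -b_i$ (which changes neither $K$ nor any norm) turns every nonzero off-diagonal inner product into $-1$. The key observation is then that the cyclic sequence $a_1,c_1,a_2,c_2,\ldots,a_n,c_n$ realizes the extended Dynkin diagram $\tilde A_{2n-1}$: each node has norm $2$, cyclically adjacent nodes meet in $-1$, and all other pairs are orthogonal (one checks in particular that $c_n$ closes the cycle back to $a_1$ using the mod-$n$ convention). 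Moreover these vectors satisfy the affine relation
\begin{equation*}
a_1 + c_1 + \cdots + a_n + c_n = \sum_i a_i - \sum_i b_i = \Big(\sum_i f_i + \sum_i g_i\Big) - \Big(\sum_i f_i + \sum_i g_{i+1}\Big) = 0,
\end{equation*}
since $i \mapsto i+1$ permutes the index set. Deleting one node, say $c_n$, leaves the $2n-1$ vectors $a_1,c_1,\ldots,c_{n-1},a_n$, which form a path — the ordinary Dynkin diagram $A_{2n-1}$ — and hence have Gram matrix equal to the Cartan matrix of $A_{2n-1}$.

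It remains to check that these $2n-1$ vectors are a basis of $K$. Linear independence is automatic because the Cartan matrix is nonsingular. They generate $K$ because $\{a_i\}\cup\{c_i\}$ generates $K$ by construction, while the deleted vector $c_n$ is recovered as an integer combination of the others through the affine relation. The only real obstacle is this generation step: one must confirm that $K$ has rank exactly $2n-1$, so that $2n-1$ independent vectors already span a full-rank sublattice, and that $\sum_i a_i = \sum_i b_i$ is the \emph{sole} relation among the $2n$ generators, so that nothing beyond $c_n$ is lost. The rank count follows by noting that the $a_i$ span an $n$-dimensional space meeting the $(n-1)$-dimensional span of the differences $g_i - g_{i+1}$ only in $0$, giving rank $2n-1$; this simultaneously pins down that the single affine relation is the only dependence. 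Granting this, the Gram matrix of $K$ in the chosen basis coincides with that of $A_{2n-1}$, and therefore $K \cong A_{2n-1}$.
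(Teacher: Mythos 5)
Your proof is correct, and it reaches the conclusion by a genuinely different mechanism than the paper. The paper works inside the standard coordinate model of $A_{2n-1}$: after the sign change $e_{n+i}\mapsto -e_{n+i}$ (the analogue of your $c_i:=-b_i$), the generators of $M$ and $N$ become the roots $e_i-e_{n+i}$ and $e_i-e_{n+i+1}$, every element of $K$ visibly has zero coordinate sum, so $K\leq A_{2n-1}$; the reverse inclusion is then obtained by writing each standard simple root $e_i-e_{i+1}$ of $A_{2n-1}$ as an explicit $\mathbb{Z}$-combination of the generators. You instead never fix an ambient copy of $A_{2n-1}$: you observe that the interlaced generators $a_1,c_1,\ldots,a_n,c_n$ realize the affine diagram $\tilde A_{2n-1}$ together with its affine relation $\sum a_i+\sum c_i=0$, delete one node, and read off that the remaining $2n-1$ generators have Gram matrix equal to the Cartan matrix of $A_{2n-1}$. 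The paper's version buys a concrete embedding $K\leq A_{2n-1}$ that it reuses later (e.g.\ when comparing with the $D_n$ sublattice), while yours is self-contained, characterizes $K$ by its Gram matrix alone, and makes the ``why'' transparent through the affine diagram. One small remark: your closing worry about the affine relation being the \emph{sole} relation is unnecessary. Once $c_n$ is an integer combination of the other $2n-1$ generators, those generators span $K$ over $\mathbb{Z}$ outright, and their linear independence is already guaranteed by the nonsingularity (indeed positive definiteness) of the Cartan matrix; the separate rank count, while valid, is redundant.
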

\begin{proof}
Since any two $n$-cycles are conjugate, we may assume that $h$ is the $n$-cycle $\{e_1,e_2,\ldots,e_n\}$.  We choose the orthonormal basis $\{e_1,\ldots, e_n, -e_{n+1}, \ldots, -e_{2n}\}$ for $\mathbb{Z}^n \perp \mathbb{Z}^n$, so the lattice $M$ is spanned by the vectors $\{e_1-e_{n+1},\ldots,e_n-e_{2n}\}$ and $N$ is spanned by $\{e_1-e_{n+2},\ldots,e_{n-1}-e_{2n},e_n-e_{n+1}\}$.  Therefore, all vectors in $K:=M+N$ have zero coordinate sum, so $K\leq A_{2n-1}$.  We can verify that the vectors $\{e_1-e_{n+1},\ldots,e_n-e_{2n}, e_1-e_{n+2},\ldots,e_{n-1}-e_{2n},e_n-e_{n+1}\}$ in fact span all of $A_{2n-1}$.  For $1\leq i \leq n-1$, we can write $e_i-e_{i+1}=(e_i-e_{i+n+1})-(e_{i+1}-e_{i+n+1})$.  For $n+1\leq i \leq 2n-1$, we can write $e_i-e_{i+1}=(e_{i-n}-e_{i+1})-(e_{i-n}-e_{i})$.  Since we are already given the vector $e_n-e_{n+1}$, we have accounted for all vectors of the form $e_i-e_{i+1}, 1 \leq i \leq 2n-1$, and these vectors clearly span the whole lattice $A_{2n-1}$.  Hence, $K\cong A_{2n-1}$.  
\end{proof}
Given any permutation $h$, we can express $h$ as a product of disjoint cycles.  We may then express $\mathbb{Z}^n$ as an orthogonal sum of sublattices, each isometric to $\mathbb{Z}^m$ for some $m\leq n$, on which $h$ acts as an $m$-cycle.  The above theorem then gives us the corresponding sublattice of $K$ for each of the disjoint cycles, and the whole lattice $K$ can then be represented as the orthogonal sum of these sublattices.

It is natural to extend this characterization to the even sublattice $D_n<\mathbb{Z}^n$.  The following theorem gives the index of the sublattice $K_{D_n}<K_{\mathbb{Z}^n}$ in the case where $h$ is an $n$-cycle:
\begin{thm}\label{DnSublattice}Let $L=\mathbb{Z}^n$, and let $L'\leq L$ be the even sublattice $D_n$.  Then the lattice $K$ formed by applying an $n$-cycle to $L$ contains the lattice $K'$ formed by applying the same $n$-cycle to $L'$ with index 2 for odd $n$ and index 4 for even $n$.\end{thm}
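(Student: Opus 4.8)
The plan is to set up the four diagonal sublattices explicitly and compute the index by factoring it through an intermediate lattice, reducing everything to quotients of order at most $2$ via the second isomorphism theorem. Write $M = M_{\mathbb{Z}^n}$, $N = N_{\mathbb{Z}^n}$ for the two lattices used to build $K$, and $M' = M_{D_n}$, $N' = N_{D_n}$ for those used to build $K'$, so that $K = M + N$ and $K' = M' + N'$ with $K' \leq K$. Since $D_n$ has index $2$ in $\mathbb{Z}^n$ and the maps $x \mapsto (x,x)$ and $x \mapsto (x,hx)$ are isomorphisms onto $M$ and $N$ respectively, we get $[M:M'] = [N:N'] = 2$. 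Because $h$ is an $n$-cycle, its fixed vectors in $\mathbb{Z}^n$ are exactly the multiples of $u := e_1 + \cdots + e_n$, and a short computation gives $M \cap N = \mathbb{Z}(u,u)$; hence both $K$ and $K'$ have rank $2n-1$ and $K'$ has finite index in $K$. The single fact that drives the parity dichotomy is that $u \in D_n$ iff $n$ is even, whereas $2u \in D_n$ always.

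First I would factor the index through the chain $M' + N' \subseteq M + N' \subseteq M + N$, so that $[K:K'] = [M+N : M+N'] \cdot [M+N' : M'+N']$. The key algebraic inputs are the two identities $N \cap (M + N') = (M \cap N) + N'$ and $M \cap (M' + N') = M' + (M \cap N')$, each of which follows from the elementary observation that a difference of two elements of $M$ (resp. $N$) again lies in $M$ (resp. $N$). By the second isomorphism theorem these give
\begin{equation*}
[M + N : M + N'] = \bigl| N / ((M \cap N) + N') \bigr|, \qquad [M + N' : M' + N'] = \bigl| M / (M' + (M \cap N')) \bigr|.
\end{equation*}
Each right-hand side is a quotient of $N/N' \cong \mathbb{Z}/2$ or of $M/M' \cong \mathbb{Z}/2$, hence has order $1$ or $2$, and deciding which amounts to testing whether the relevant copy of the fixed vector lies in $D_n$.

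Carrying this out: for the first factor the question is whether $(u,u) \in N'$, i.e. whether $u \in D_n$, which holds precisely when $n$ is even; this yields index $2$ for $n$ even and index $1$ for $n$ odd. For the second factor one computes $M \cap N' = \mathbb{Z}(u,u)$ when $n$ is even and $M \cap N' = \mathbb{Z}(2u,2u)$ when $n$ is odd, and in either case the generator lies in $M'$ (since $u \in D_n$ for $n$ even and $2u \in D_n$ always), so this factor is always $2$. Multiplying, $[K:K'] = 1 \cdot 2 = 2$ for $n$ odd and $2 \cdot 2 = 4$ for $n$ even, as claimed.

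The main obstacle is not any estimate but the bookkeeping forced by the rank-$1$ overlap $M \cap N = \mathbb{Z}(u,u)$: it is exactly this nonzero intersection that makes the naive guess $[M:M'][N:N'] = 4$ wrong for odd $n$. One must track the fixed vector $(u,u)$ carefully, because for odd $n$ it lies in $M$ and in $N$ but \emph{not} in $N'$, so adjoining $M$ to $N'$ already recovers the index-$2$ gap between $N'$ and $N$ and collapses the first factor. Getting these intersections right is where the care is needed; an alternative route via the index--determinant formula (Theorem~\ref{IndexDet}) together with $\det K = \det A_{2n-1} = 2n$ would require computing $\det K'$ directly and is messier than the tower argument above.
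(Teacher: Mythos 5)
Your proof is correct, and it takes a genuinely different route from the paper's. The paper argues directly on cosets: it writes a general element of $K$ as $(v,v)+(w,hw)$, observes that its coset modulo $K'$ depends only on the parities of the coordinate sums of $v$ and $w$, and then shows that for odd $n$ the substitution $v \mapsto v+(1,\ldots,1)$, $w \mapsto w-(1,\ldots,1)$ identifies the four parity classes in pairs, while for even $n$ a fixed-vector argument (an $h$-fixed vector is a multiple of $(1,\ldots,1)$, which then has even coordinate sum) shows all four classes are distinct. Your tower $M'+N' \subseteq M+N' \subseteq M+N$, combined with Dedekind's modular law and the second isomorphism theorem, packages the same underlying facts --- the fixed sublattice of $h$ is $\mathbb{Z}u$ and $u \in D_n$ exactly when $n$ is even --- into two index computations, each forced to be $1$ or $2$ before any case analysis begins. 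What your version buys is transparency about where the parity of $n$ enters (solely through whether $(u,u)$ lies in $N'$, resp.\ whether the generator of $M \cap N'$ lies in $M'$) and easy generalization: the paper's remark following the theorem, that the odd case extends to any permutation fixing $(1,\ldots,1)$ and the even case to permutations with only even cycles, drops out of your framework with no extra work, since only $L^h \cap D_n$ matters. What the paper's version buys is concreteness (explicit coset representatives) and independence from the modular-law identities, which you assert correctly but should spell out in a final write-up: if $x \in N$ and $x = m + n'$ with $m \in M$, $n' \in N' \subseteq N$, then $m = x - n' \in M \cap N$, giving $N \cap (M+N') \subseteq (M\cap N) + N'$, the reverse inclusion being trivial. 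Both proofs are sound.
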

\begin{proof}  Take $h$ to be an $n$-cycle.  We can write each vector in $K$ as a sum $(v,v)+(w,hw)$ for $v, w \in L$.  Such a vector is also in $K'$ if we can find such an expression where $v$ and $w$ each have even coordinate sum.  Clearly, if one of these vectors has even coordinate sum and the other has odd coordinate sum, the resulting vector is not contained in $K'$ since $v+w$ has odd coordinate sum.  This gives two possible cosets of $K'$ in $K$, and the trivial case $K'$ and the case of $v$ and $w$ both having odd coordinate sum complete the four possibilites.

For odd $n$, we can show that there are only two cosets as follows.  For each pair of vectors $v,w$, we can define $v'=v+(1,1,\ldots,1)$ and $w'=w-(1,1,\ldots,1)$.  Since $h$ fixes $(1,1,\ldots,1)$, we have $(v',v')+(w',hw')=(v,v)+(w,hw)$.  But $v'$ and $w'$ have the opposite coordinate sum parity of $v$ and $w$, respectively, so this shows that the cases of exactly one of $v,w$ having odd coordinate sum are equivalent, and that the case of $v$ and $w$ both having odd coordinate sum is equivalent to the case where both have even coordinate sum.  Hence, there are only two cosets and $K$ contains $K'$ with index 2 for odd $n$. 

However, if $n$ is even, the four cosets are distinct.  To show this, suppose first that $v$ and $w$ both have odd coordinate sum.  We show that $(v,v)+(w,hw)\not\in K'$.  Suppose that $v'$ and $w'$ are vectors with even coordinate sum such that $(v',v')+(w',hw')=(v,v)+(w,hw)$.  Then we have $(v'-v+w'-w,v'-v+hw'-hw)=0$, so from the left side $v'-v=w-w'$.  But the right side gives $v'-v=hw-hw'=h(w-w')$.  Thus $w-w'$ is fixed by $h$.  Since $h$ is an $n$-cycle, $w-w'$ must be a multiple of $(1,1,\ldots,1)$.  But such a vector always has even coordinate sum, while $w-w'$ has odd coordinate sum, giving a contradiction.  The same argument shows that the two cases with exactly one of $v,w$ having odd coordinate sum are distinct as well, so there are four cosets and $K$ contains $K'$ with index 4 for even $n$.
\end{proof}
The proof for the case where $n$ is odd does not require $h$ to be an $n$-cycle, but only that $h$ fixes $(1,1,\ldots,1)$.  Hence, we can apply this part of the theorem for any permutation $h$.  The even case, however, relies on the fact that any vector fixed by $h$ has even coordinate sum, and hence requires that the cycle structure of $h$ consist only of even cycles.

The above theorem describes the lattices obtained from $D_n$ by permutations, and the following theorem covers the cases where $h$ negates some coordinates.
\begin{thm}\label{DnNegations}Let $L=D_n$, and let $h$ negate $k$ coordinates and fix the remaining coordinates, where $k<n$.  Then the resulting lattice $K\cong\sqrt{2}D_{n+k}$. \end{thm}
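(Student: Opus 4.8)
The plan is to produce an explicit description of $K$ as a subset of $\mathbb{R}^{2n}$ and then exhibit an orthonormal coordinate change carrying it onto $\sqrt{2}D_{n+k}$ on the nose. By the conjugacy remark preceding the theorem I may assume $h$ negates the first $k$ coordinates and fixes the rest. The engine of the argument is the identity $hY = Y - 2PY$, where $P$ denotes the projection of $\mathbb{R}^n$ onto the first $k$ coordinates. Writing a general element of $K=M+N$ as $(X+Y,\,X+hY)$ with $X,Y\in D_n$, this identity rewrites it as $(U,\,U-2PY)$ with $U=X+Y$. Since $X$ and $Y$ range independently over $D_n$, the first block $U$ ranges over all of $D_n$ while $PY$ ranges over $P(D_n)$; and because $k<n$ there is always a free coordinate with which to correct parity, so $P(D_n)=\mathbb{Z}^k$. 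This yields the clean description $K=\{(X,\,X+2Z)\mid X\in D_n,\ Z\in\mathbb{Z}^k\oplus 0^{\,n-k}\}$.

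Next I would read off coordinates. For a fixed coordinate $j>k$ both blocks of any vector of $K$ agree, so $K$ lies in the rank-$(n+k)$ subspace spanned by the orthonormal system $f_i=e_i\oplus 0$ and $g_i=0\oplus e_i$ (for $i\le k$) together with $w_j=\tfrac{1}{\sqrt{2}}(e_j\oplus e_j)$ (for $j>k$). In these coordinates the vector $(X,\,X+2Z)$ has $f_i$-coordinate $X_i$ and $g_i$-coordinate $X_i+2Z_i$ — two integers of equal parity — and $w_j$-coordinate $\sqrt{2}\,X_j\in\sqrt{2}\,\mathbb{Z}$, subject to the single constraint that $\sum_{i\le k}X_i+\sum_{j>k}X_j$ be even (which is exactly the condition $X\in D_n$).

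Finally I would rotate each $(f_i,g_i)$-plane by $45^{\circ}$, setting $p_i=(a_i+b_i)/\sqrt{2}$ and $q_i=(a_i-b_i)/\sqrt{2}$ for the pair $(a_i,b_i)=(X_i,\,X_i+2Z_i)$. Equal parity of $a_i,b_i$ forces $p_i,q_i\in\sqrt{2}\,\mathbb{Z}$, and the pair sweeps out all of $\sqrt{2}\,\mathbb{Z}^2$; after this rotation every coordinate of $K$ lies in $\sqrt{2}\,\mathbb{Z}$. Tracking the even-sum constraint through the substitution $a_i=m_i+l_i$ (where $p_i=\sqrt{2}\,m_i$, $q_i=\sqrt{2}\,l_i$) turns it into the statement that the total sum of all $n+k$ integer coordinates is even. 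That is precisely the defining condition of $D_{n+k}$ scaled by $\sqrt{2}$, so $K\cong\sqrt{2}\,D_{n+k}$.

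The routine parts are the coordinate reading and the rotation; the two places demanding care are the derivation of the description of $K$ — in particular the use of $k<n$ to conclude $P(D_n)=\mathbb{Z}^k$, which is where the hypothesis is genuinely needed (for $k=n$ one instead gets $\sqrt{2}\,D_n\perp\sqrt{2}\,D_n$) — and the bookkeeping that shows the single $D_n$ parity condition survives the $45^{\circ}$ rotations as the single $D_{n+k}$ parity condition, rather than splitting into several independent congruences. I expect this last verification to be the main obstacle, since it is the step where the rank-$n$ even-sum lattice must be correctly spread out across the $n+k$ new coordinates.
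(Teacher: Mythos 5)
Your argument is correct, but it reaches the conclusion by a genuinely different route than the paper. The paper's proof is a sandwich: it first identifies the overlattice $K_{\mathbb{Z}^n}$ as the rectangular lattice $\sqrt{2}\mathbb{Z}^{n+k}$ on the orthogonal frame $u_1=e_1+e_{n+1},\ldots,u_n=e_n+e_{2n},\,u_{n+1}=e_1-e_{n+1},\ldots,u_{n+k}=e_k-e_{n+k}$, then verifies by a three-way case analysis that every generator $\pm u_i\pm u_j$ of the $D$-type sublattice already lies in $M_{D_n}+N_{D_n}$ (the mixed case $i\le n<j$ is where $k<n$ enters, via the coordinate $e_n$ fixed by $h$ that cancels the cross terms), and finally notes that $K_{D_n}\neq K_{\mathbb{Z}^n}$ by a parity obstruction; since $\sqrt{2}D_{n+k}$ has index $2$ in $\sqrt{2}\mathbb{Z}^{n+k}$, the strict containment forces $K_{D_n}\cong\sqrt{2}D_{n+k}$. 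You instead prove the exact parametrization $K=\{(X,X+2Z)\mid X\in D_n,\;Z\in\mathbb{Z}^k\oplus 0^{\,n-k}\}$ from the identity $hY=Y-2PY$, the independence of $U=X+Y$ and $Y$, and the surjectivity $P(D_n)=\mathbb{Z}^k$ (your only use of $k<n$, and a correct one), and then exhibit an explicit isometry onto $\sqrt{2}D_{n+k}$ via the $45^{\circ}$ rotations; the parity bookkeeping you flagged as the delicate step does go through, since $a_i=m_i+l_i$ converts the single $D_n$ congruence verbatim into the single $D_{n+k}$ congruence rather than splitting it. The two proofs use the same orthonormal frame (your $w_j$ and rotated axes are the paper's $u_i$ rescaled by $1/\sqrt{2}$) and invoke $k<n$ for the same underlying reason --- a coordinate outside the negated block is needed to adjust parity --- but your version buys a closed-form description of $K$ and a direct isomorphism with no appeal to the index--determinant formula, while the paper's version reuses the already-computed lattice $K_{\mathbb{Z}^n}\cong {A_1}^{n+k}$ and only has to test finitely many generator types.
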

\begin{proof}
We may assume $h$ negates the first $k$ coordinates.  The lattice $K_{\mathbb{Z}^n}$ obtained from $\mathbb{Z}^n$ by applying $h$ has the orthogonal basis $\{u_1,\ldots,u_{n+k}\}$, where $u_1=e_1+e_{n+1},\ldots,u_n=e_n+e_{2n},u_{n+1}=e_1-e_{n+1},\ldots,u_{n+k}=e_k-e_{n+k}$, and thus $K_{\mathbb{Z}^n}$ is isometric to ${A_1}^{n+k}\cong\sqrt{2}\mathbb{Z}^{n+k}$.  In the same way that the vectors $\pm e_i\pm e_j$ span $D_n<\mathbb{Z}^n$, the vectors of the form $\pm u_i\pm u_j$ span a sublattice of $K_{\mathbb{Z}^n}$ isometric to $\sqrt{2}D_{n+k}$.

We now show that all vectors of this form are contained in $K_{D_n}=M_{D_n}+N_{D_n}$.  If $i,j\leq n$, we have $\pm u_i\pm u_j=(\pm e_i\pm e_j)+(\pm e_{n+i}\pm e_{n+j})=(v,v)\in M_{D_n}$, where we take $v=\pm e_i \pm e_j \in D_n$ with the same signs as $\pm u_i\pm u_j$.  If $i,j> n$, we have $\pm u_i\pm u_j=(\pm e_{i-n}\pm e_{j-n})-(\pm e_i\pm e_j)=(v,hv)\in N_{D_n}$, where we take $v=\pm e_{i-n} \pm e_{j-n} \in D_n$ with the same signs as $\pm u_i\pm u_j$.  If $i\leq n$ and $j>n$, we have $\pm u_i\pm u_j=\pm((e_i+e_n)+(e_{n+i}+e_{2n}))\pm ((e_{j-n}\pm e_n)+(-e_j\pm e_{2n}))=(v,v)+(w,hw)\in M_{D_n}+N_{D_n}=K_{D_n}$, where we take $v=\pm(e_i + e_n)$ with the same sign as $u_i$, and $w=\pm e_{j-n} \pm e_n$, where the sign of the $e_{j-n}$ term is the same as the sign of $u_j$ and the sign of the $e_n$ is the opposite of the sign of $u_i$.  Since $k<n$, $h$ fixes $e_n$, so with this choice of sign the $e_n$ and $e_{2n}$ terms cancel.

This covers all the cases, so we have $\sqrt{2}D_{n+k}\leq K_{D_n}$.  $K_{D_n}$ cannot be the whole lattice $K_{\mathbb{Z}^n}={A_1}^{n+k}$, since all vectors in $K_{D_n}$ must have even sum in the first $n$ coordinates, while $K_{\mathbb{Z}^n}$ contains vectors such as the $u_i$ that have odd sum in the first $n$ coordinates.  Thus, since $\sqrt{2}D_{n+k}$ is a sublattice of index 2, we have $K_{D_n}\cong\sqrt{2}D_{n+k}$.
\end{proof}

In the following result we refer to the tensor product of lattices.  Given lattices $L_1,L_2$, the tensor product $L_1\otimes L_2$ is the lattice whose underlying free abelian group is $L_1 \otimes_{\mathbb{Z}} L_2$, and whose inner product is given by
\begin{equation*}
(x_1\otimes x_2,y_1\otimes y_2)=(x_1,y_1)\cdot (x_2,y_2).
\end{equation*}
 The Gram matrix of such a tensor product is the Kronecker product of the Gram matrices of $L_1$ and $L_2$.
\begin{thm}\label{A2Tensor}  Let $L$ be any lattice, and let $h\in O(L)$ be an element satisfying $h^2+h+1=0$.  Then the resulting lattice $K\cong A_2\otimes L$. \end{thm}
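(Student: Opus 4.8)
The plan is to exhibit an explicit $\mathbb{Z}$-basis of $K$ whose Gram matrix is literally the Kronecker product of the Gram matrix $\begin{pmatrix}2&-1\\-1&2\end{pmatrix}$ of $A_2$ with the Gram matrix $G$ of $L$. Since the excerpt records that the Gram matrix of a tensor product is the Kronecker product of the factors' Gram matrices, producing a basis of $K$ with Gram matrix $\begin{pmatrix}2G&-G\\-G&2G\end{pmatrix}$ immediately identifies $K$ with $A_2\otimes L$, because that is exactly the Gram matrix of $A_2\otimes L$ in the basis $\{\alpha\otimes v_1,\ldots,\alpha\otimes v_n,\beta\otimes v_1,\ldots,\beta\otimes v_n\}$, where $\alpha,\beta$ is the standard root basis of $A_2$ and $v_1,\ldots,v_n$ is a basis of $L$.

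First I would extract the two facts I need from $h^2+h+1=0$. If $hx=x$ for some $x$, then $3x=(h^2+h+1)x=0$, so $x=0$; thus $h-1$ is injective. Consequently, if $(x,x)=(y,hy)$ lies in $M\cap N$ then $x=y$ and $hx=x$, forcing $x=0$, so $M\cap N=0$ and $K=M\oplus N$ as abelian groups, of rank $2n$ where $n=\operatorname{rank}L$. The relation also gives the identity $h+h^2=-1$, which is the algebraic heart of the argument (morally, $h$ acts as a primitive cube root of unity $\omega$, and $\omega+\omega^2=-1$). Finally, since $h\in O(L)$ is a bijective isometry of $L$, the set $\{hv_1,\ldots,hv_n\}$ is again a $\mathbb{Z}$-basis of $L$.

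The crux is the choice of basis for $N$. Fix a basis $\{v_1,\ldots,v_n\}$ of $L$, let $G_{ij}=(v_i,v_j)$, set $m_i:=(v_i,v_i)$, and—rather than the naive generators $(v_i,hv_i)$—set $p_j:=(hv_j,h^2v_j)\in N$. Because $\{hv_j\}$ is a $\mathbb{Z}$-basis of $L$, the $\{p_j\}$ form a $\mathbb{Z}$-basis of $N$, so $\{m_i\}\cup\{p_j\}$ is a $\mathbb{Z}$-basis of $K=M\oplus N$. Using that $h$ preserves the form, the diagonal blocks compute at once: $m_i\cdot m_j=2(v_i,v_j)=2G_{ij}$ and $p_i\cdot p_j=(hv_i,hv_j)+(h^2v_i,h^2v_j)=2G_{ij}$. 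The off-diagonal block is where $h+h^2=-1$ enters: $m_i\cdot p_j=(v_i,hv_j)+(v_i,h^2v_j)=(v_i,(h+h^2)v_j)=-(v_i,v_j)=-G_{ij}$. Hence the Gram matrix of $K$ in this basis is $\begin{pmatrix}2G&-G\\-G&2G\end{pmatrix}$, and the theorem follows.

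I expect the only genuine obstacle to be locating this basis for $N$. With the obvious generators $(v_i,hv_i)$ the off-diagonal block is $G+H$ where $H_{ij}=(v_i,hv_j)$ is neither symmetric nor visibly equal to $-G$ (one can check only that $H+H^{\mathsf{T}}=-G$), so one would be forced into a further integral base change to finish. Recognizing that the single substitution $y\mapsto hv_j$—equivalently, using $1+h=-h^2$ so that $(1+h)(hv_j)=-v_j$—collapses the cross term to $-G$ in one stroke is the key move; every remaining step is a routine verification resting on $h$ being an isometry.
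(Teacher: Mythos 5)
Your proposal is correct and is essentially identical to the paper's proof: it uses the same basis $(v_i,v_i)$ for $M$ and $(hv_j,h^2v_j)$ for $N$, and the same computation via $h+h^2=-1$ to get the off-diagonal block $-G$. The only addition is your explicit check that $M\cap N=0$, which the paper leaves implicit.
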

\begin{proof}  Let $\{v_1,\ldots,v_n\}$ be a basis for $L$ with Gram matrix $G$.  We choose the standard basis $\{u_1,\ldots,u_n\}$ for $M$, where $u_i=(v_i,v_i), 1\leq i \leq n$, and the basis $\{w_1,\ldots,w_n\}$ for $N$, where $w_i=(hv_i,h^2v_i), 1\leq i \leq n$.  This is indeed a basis for $N$, since $\{hv_1,\ldots,hv_n\}$ is a basis for $L$ because $h$ is an isometry.  Clearly, the inner products $(u_i,u_j)=(w_i,w_j)=2(v_i,v_j), 1\leq i,j\leq n$, since $h$ preserves inner products.  Also, 
\begin{equation*}
(u_i,w_j)=(v_i,hv_j)+(v_i,h^2v_j)=(v_i,hv_j+h^2v_j)=(v_i,-v_j)=-(v_i,v_j), \quad1\leq i,j \leq n,
\end{equation*}
so the Gram matrix for $K:=M+N$ with respect to the basis $\{u_1,\ldots,u_n,w_1,\ldots,w_n\}$ has the block form $\begin{pmatrix}2G&-G\\-G&2G\end{pmatrix}=\begin{pmatrix}2&-1\\-1&2\end{pmatrix}\otimes G$, which is the Gram matrix of $A_2 \otimes L$.
\end{proof}

\section{$L=\mathbb{Z}^2$}
In the following sections, we take various choices of $L$ and compute $K$ for all possible conjugacy classes.  We begin with $\mathbb{Z}^2$.

The isometry group $O(\mathbb{Z}^2)$ is dihedral of order 8.  There are five conjugacy classes. The cases $h=\pm 1$ are discussed in the introduction.  The remaining three cases follow.

\subsection{Reflection Across a Coordinate Axis}
The reflections across the $x$- and $y$-axes are conjugate in the isometry group.  We choose $h$ to be the reflection across the $x$-axis, so $h(1,0)=(1,0)$ and $h(0,1)=(0,-1)$.  Thus, the lattice $K$ is spanned by the vectors $\{ (1,0,1,0),(0,1,0,1),(0,1,0,-1) \}$. These vectors are pairwise orthogonal and all have norm 2, so $K$ is the rectangular lattice $A_{1}^3$.

\subsection{Reflection Across a Diagonal Axis}
The reflections across the lines $y=x$ and $y=-x$ are conjugate.  Let $h$ be the reflection across $y=x$, so $h(1,0)=(0,1)$ and $h(0,1)=(1,0)$.  Note that this represents a transposition on the orthonormal basis $\{e_1,e_2\}$, so by Theorem \ref{ncyclethm} $K\cong A_3$.

\subsection{Rotation by $90^{\circ}$}
\label{Z2-90}
The clockwise and counterclockwise $90^{\circ}$ rotations are conjugate.  Let $h$ be the counterclockwise rotation, so $h(1,0)=(0,1)$ and $h(0,1)=(-1,0)$.  Thus, $K$ is spanned by $\{ (1,0,1,0),(0,1,0,1),(1,0,0,1),(0,1,-1,0)\}$.  The Gram matrix
\begin{equation*}
G_K=
\begin{pmatrix}
2 & 0 & 1 & -1 \\
0 & 2 & 1 & 1 \\
1 & 1 & 2 & 0 \\
-1 & 1 & 0 & 2
\end{pmatrix}
\end{equation*}
has determinant 4.  Since any even lattice of rank 4 and determinant 4 is isometric to $D_4$ (Theorem \ref{Rank4}), $K \cong D_4$.

\section{$L=A_2$}
The isometry group $O(A_2)$ is dihedral of order 12, and there are six conjugacy classes.  Again, the cases $h=\pm 1$ are simple and covered in the introduction.  We take the standard basis $\{ v_1, v_2 \}$ of $L$, where the inner products $(v_1,v_1)=(v_2,v_2)=2$ and $(v_1,v_2)=-1$.  These vectors may be represented in $\mathbb{R}^3$ as $v_1=e_1-e_2,v_2=e_2-e_3$, or in $\mathbb{R}^2$ as $v_1=(\sqrt{2},0),v_2=(-\frac{\sqrt{2}}{2},\frac{\sqrt{6}}{2})$.  The latter representation is useful in visualizing the lattice and its isometry group.

\subsection{Reflection Across an Axis Through a Root}
The reflections across the three lines through a root are conjugate.  We take $h$ to be the reflection across the line through $v_1$, so $hv_1=v_1$ and $hv_2=-v_1-v_2$.  Then $K$ is spanned by $\{ (v_1,v_1),(v_2,v_2),(v_2,-v_1-v_2)\} $.  The Gram matrix
\begin{equation*}
G_K=
\begin{pmatrix}
4 & -2 & -2 \\
-2 & 4 & 1 \\
-2 & 1 & 4 
\end{pmatrix}
\end{equation*}
has determinant 36.  The vector $w:=2(v_2, -v_1-v_2)+(v_1,v_1)=(v_1+2v_2,-v_1-2v_2)$ is orthogonal to $M=span((v_1,v_1),(v_2,v_2))$, so $K$ contains a sublattice $J\cong \sqrt{2}A_2 \perp \mathbb{Z}w$.  The determinant of $J$ is $12\cdot 12=144$, so $J$ has index 2 in $K$.  We can form $K$ from $J$ by means of the gluing $K=J+\mathbb{Z}(v_2,-v_1-v_2)$.

Also, the Smith invariant sequence for $G_K$ is 1, 3, 12, so there exists $u \in K^*$ such that $K + u$ has order 4 in the discriminant group.  Then $(2u,2u)=(4u,u)\in \mathbb{Z}$, so $K+2\mathbb{Z} u$ is an integral lattice.  Since this lattice contains $K$ as a sublattice of index 2, it has determinant 9.

\subsection{Reflection Across an Axis Perpendicular to a Root}
The three reflections across lines perpendicular to roots are conjugate.  We take $h$ to be the reflection $r_{v_1}$ across a line perpendicular to $v_1$, so $hv_1=-v_1$ and $hv_2=v_1+v_2$.  Then $K$ is spanned by $\{ (v_1,v_1),(v_2,v_2),(v_1,-v_1),(v_2,v_1+v_2)\} $.  Since $(v_2,v_1+v_2)-(v_2,v_2)=(v_1,0)$ and $(v_1,v_1)-(v_1,0)=(0,v_1)$, we have $K=span((v_1,0),(0,v_1),(v_2,v_2))$.  This gives the Gram matrix
\begin{equation*}
G_K=
\begin{pmatrix}
2 & 0 & -1 \\
0 & 2 & -1 \\
-1 & -1 & 4 
\end{pmatrix},
\end{equation*}
which has determinant 12 and Smith invariant sequence 1, 1, 12.  The vector $(v_1,-v_1)$ is orthogonal to each of the vectors $(v_1,v_1), (v_2,v_2)$, so we can construct a sublattice $J\cong \sqrt{2}A_2 \perp \mathbb{Z}(v_1,-v_1)$, which has determinant $12\cdot 4=48$ and is thus a sublattice of index 2 in $K$ by Theorem \ref{IndexDet}.

Since the discriminant group $\mathcal{D}(K)$ is cyclic of order 12, there is a vector $u \in K^*$ with order $4 \pmod{K}$, so, as above, we can embed $K$ with index 2 in the integral lattice $H:=K+2\mathbb{Z}u$, which has determinant 3.  $H$ is therefore either rectangular or isometric to $\mathbb{Z}\perp A_2$ (Theorem \ref{Rank3}).  Thus $H$ is not even, so $K$ must be the even sublattice of $H$ since $K$ is even and a sublattice of index 2 in $H$.  Suppose $H$ is rectangular.  Then $H$ has a basis $\{u_1,u_2,u_3\}$ whose Gram matrix is
\begin{equation*}
G_H=
\begin{pmatrix}
1 & 0 & 0 \\
0 & 1 & 0 \\
0 & 0 & 3 
\end{pmatrix}.
\end{equation*}
The even sublattice of $H$ is then spanned by $\{u_1-u_2,u_1+u_2,u_3-u_1\}$, and the Gram matrix for the even sublattice with respect to this basis is
\begin{equation*}
\begin{pmatrix}
2 & 0 & -1 \\
0 & 2 & -1 \\
-1 & -1 & 4 
\end{pmatrix}=G_K.
\end{equation*}
Thus, we can construct $K$ as the even sublattice of the rectangular lattice $\mathbb{Z}^2 \perp \sqrt{3} \mathbb{Z}$.

\subsection{Rotation by $60^{\circ}$}
The clockwise and counterclockwise $60^{\circ}$ rotations are conjugate.  We take $h$ to be the counterclockwise rotation, so $hv_1=v_1+v_2$ and $hv_2=-v_1$.  Then $K$ is spanned by $\{(v_1,v_1),(v_2,v_2),(v_1,v_1+v_2),(v_2,-v_1)\}$.  We have
\begin{eqnarray}
(v_1,v_1+v_2)-(v_1,v_1)=(0,v_2) \nonumber \\
(v_2,v_2)-(0,v_2)=(v_2,0) \nonumber \\
(v_2,0)-(v_2,-v_1)=(0,v_1) \nonumber \\
(v_1,v_1)-(0,v_1)=(v_1,0) \nonumber
\end{eqnarray}
and these vectors span the whole lattice $L\perp L$.

\subsection{Rotation by $120^{\circ}$}
The clockwise and counterclockwise $120^{\circ}$ rotations are conjugate.  In either case, if we take $h$ to be a $120^\circ$ rotation, we have $h^2+h+1=0$, so by Theorem \ref{A2Tensor} we have $K\cong A_2 \otimes A_2$.

\section{$L=A_3$}
The isometry group of $A_3$ is $O(A_3)=Sym_4\times \langle -1 \rangle$, where we take the standard basis $\{v_1=e_1-e_2,v_2=e_2-e_3,v_3=e_3-e_4\} \subset \mathbb{R}^4$ for $A_3$ and $Sym_4$ acts on the set $\{e_1,e_2,e_3,e_4\}$.  There are 10 conjugacy classes, two (corresponding to a choice of $\pm 1$) for each partition of four elements into cycles.  The cases $h=\pm 1$ are trivial and discussed in the introduction.

\subsection{Transposition}\label{A3transp}
There are 6 elements of $O(A_3)$ that correspond to a transposition of two unit vectors $e_i,e_j$.  We take $h$ to be the transposition $(e_1,e_2)=r_{v_1}$, the reflection at the root $v_1=e_1-e_2$.  We have $hv_1=h(e_1-e_2)=e_2-e_1=-v_1$, $hv_2=h(e_2-e_3)=e_1-e_3=v_1+v_2$, and $hv_3=h(e_3-e_4)=e_3-e_4=v_3$.  Then $K$ is spanned by $\{ (v_1,v_1),(v_2,v_2),(v_3,v_3),(v_1,-v_1),\\(v_2,v_1+v_2)\}$.  We have $(v_2,v_1+v_2)-(v_2,v_2)=(0,v_1)$ and $(v_1,v_1)-(0,v_1)=(v_1,0)$, and hence we see that $K=span((v_1,0),(0,v_1),(v_2,v_2),(v_3,v_3))$.  The Gram matrix is
\begin{equation*}
G_K=
\begin{pmatrix}
2 & 0 & -1 & 0 \\
0 & 2 & -1 & 0 \\
-1 & -1 & 4 & -2 \\
0 & 0 & -2 & 4
\end{pmatrix},
\end{equation*}
which has determinant 32 and Smith invariants 1, 1, 4, 8.  Thus, the discriminant group $\mathcal{D}(K)\cong \mathbb{Z}/4\mathbb{Z} \oplus \mathbb{Z}/8\mathbb{Z}$, so we can choose independent vectors $u$ and $w$ in $K^*$, both of order $4 \pmod{K}$.  Since $(2u,2u)=(4u,u) \in \mathbb{Z}$, $(2w,2w)=(4w,w) \in \mathbb{Z}$, and $(2u,2w)=(4u,w) \in \mathbb{Z}$, the lattice $H:=K+2\mathbb{Z}u+2\mathbb{Z}w$ is integral, contains $K$ as a sublattice of index 4, and thus has determinant 2 (Theorem \ref{IndexDet}).  Using the Hermite function $H(4,2)=1.8309\dots$ (Theorem \ref{Hermite}), $H$ contains a unit vector $x$, so we can decompose orthogonally, since for $v\in H$ we have $v=(v,x)x+(v-(v,x)x)\in \mathbb{Z}x \perp ann_H(x)$.  Hence $H\cong \mathbb{Z}x \perp G$ for an integral lattice $G$ of rank 3 and determinant 2.  Such a lattice must be rectangular (Theorem \ref{Rank3}), so $H$ is the rectangular lattice $\mathbb{Z}^3 \perp \sqrt{2}\mathbb{Z}$.

Also, we can see from our construction that $K$ contains a sublattice $J\cong \sqrt{2}A_3 \perp \mathbb{Z}(v_1,-v_1)$, since $M=span((v_1,v_1),(v_2,v_2),(v_3,v_3))\cong \sqrt{2}A_3$, and  $(v_1,-v_1)\in ann_K(M)$. Since $\sqrt{2}A_3$ has determinant $2^3\cdot 4=32$ and $(v_1,-v_1)$ has norm 4, $J$ has determinant 128 and thus is a sublattice of index 2 in $K$.  We can form $K$ from $J$ by means of the ``glue vector'' $(v_1,0)$: $K=J+\mathbb{Z}(v_1,0)$.

\subsection{3-Cycle}
There are 8 elements of $O(A_3)$ that correspond to a 3-cycle of unit vectors $e_i,e_j,e_k$.  We take $h$ to be the cycle $(e_1,e_2,e_3)=r_{v_1}r_{v_2}$.  We have $hv_1=v_2$, $hv_2=-v_1-v_2$, and $hv_3=v_1+v_2+v_3$.  Then $K$ is spanned by $\{ (v_1,v_1),(v_2,v_2),(v_3,v_3),(v_1,-v_2),\\(v_2,-v_1-v_2),(v_3,v_1+v_2+v_3)\}$.  We have 
\begin{eqnarray}
(v_2,-v_1-v_2)+(v_3,v_1+v_2+v_3)-(v_3,v_3)=(v_2,0) \nonumber \\
(v_2,v_2)-(v_2,0)=(0,v_2) \nonumber \\
(v_1,v_2)-(0,v_2)=(v_1,0) \nonumber \\
(v_1,v_1)-(v_1,0)=(0,v_1) \nonumber
\end{eqnarray}
and thus $K=span((v_1,0),(0,v_1),(v_2,0),(0,v_2),(v_3,v_3))$.  We obtain the Gram matrix

\begin{equation*}
G_K=
\begin{pmatrix}
2 & 0 & -1 & 0 & 0 \\
0 & 2 & 0 & -1 & 0 \\
-1 & 0 & 2 & 0 & -1 \\
0 & -1 & 0 & 2 & -1 \\
0 & 0 & -1 & -1 & 4 \\
\end{pmatrix},
\end{equation*}
which has determinant 24 and Smith invariants 1, 1, 1, 1, 24.  Using the same procedure as above (choosing a vector of order $4 \pmod{K}$), we may embed $K$ with index 2 in an integral lattice of determinant 6.  We can also construct a sublattice $J\cong \sqrt{2}A_3 \perp \sqrt{2}A_2$, where we identify $M$ with $\sqrt{2}A_3$ as before, and $span((v_1,-v_1),(v_2,-v_2))$ with $\sqrt{2}A_2$.  This is a sublattice of index 4 in $K=J+\mathbb{Z}(v_1,0)+\mathbb{Z}(v_2,0)$.

\subsection{Product of Disjoint Transpositions}
There are 3 elements of $O(A_3)$ that correspond to a product of disjoint transpositions.  We take $h=(e_1,e_2)(e_3,e_4)=r_{v_1}r_{v_3}$, so $hv_1=-v_1$, $hv_2=v_1+v_2+v_3$, and $hv_3=-v_3$.  Then $K$ is spanned by $\{ (v_1,v_1),(v_2,v_2),(v_3,v_3),(v_1,-v_1),(v_2,v_1+v_2+v_3),(v_3,-v_3)\}$.  We can write the last vector as a linear combination of the others:
\begin{equation*}
(v_3,-v_3)=(v_1,v_1)+2(v_2,v_2)+(v_3,v_3)-(v_1,-v_1)-2(v_2,v_1+v_2+v_3).
\end{equation*}
The remaining 5 vectors are linearly independent and give the Gram matrix
\begin{equation*}
G_K=
\begin{pmatrix}
4 & -2 & 0 & 0 & 0 \\
-2 & 4 & -2 & 0 & 2 \\
0 & -2 & 4 & 0 & 0 \\
0 & 0 & 0 & 4 & -2 \\
0 & 2 & 0 & -2 & 4
\end{pmatrix},
\end{equation*}
which has determinant 128 and Smith invariants 2, 2, 2, 2, 8.  Since all entries in the Gram matrix are even, the lattice $J:=K/\sqrt{2}$ is an even integral lattice with determinant 4 and cyclic discriminant group.  Thus, for $u \in J^*$, we have $(2u,2u)=(4u,u)\in \mathbb{Z}$, so $J+2J^*$ is an integral lattice which contains $J$ with index 2 and thus has determinant 1.  Therefore, $J+2J^* \cong \mathbb{Z}^5$ (Theorem \ref{Unimodular}), and since $J$ is an even sublattice of index 2, $J$ is isometric to the even sublattice of $\mathbb{Z}^5$, which is $D_5$.  Hence, $K \cong \sqrt{2}D_5$.

\subsection{4-Cycle}
There are 6 elements of $O(A_3)$ that correspond to 4-cycles.  We take $h=(e_1,e_2, e_3,e_4)=r_{v_1}r{v_2}r_{v_3}$, so $hv_1=v_2$, $hv_2=v_3$, and $hv_3=-v_1-v_2-v_3$.  Then $K$ is spanned by $\{ (v_1,v_1),(v_2,v_2),(v_3,v_3),(v_1,v_2),(v_2,v_3),(v_3,-v_1-v_2-v_3)\}$.  The Gram matrix is
\begin{equation*}
G_K=
\begin{pmatrix}
4 & -2 & 0 & 1 & -1 & -1 \\
-2 & 4 & -2 & 1 & 1 & -1 \\
0 & -2 & 4 & -1 & 1 & 1 \\
1 & 1 & -1 & 4 & -2 & 0\\
-1 & 1 & 1 & -2 & 4 & -2 \\
-1 & -1 & 1 & 0 & -2 & 4
\end{pmatrix},
\end{equation*}
which has determinant 256.  Hence, $K$ is a sublattice of index 4 in $L \perp L$ by Theorem \ref{IndexDet}.

\subsection{Negative of transposition}
There are 6 elements of $O(A_3)$ that correspond to the negative of a transposition.  We take $h=-r_{v_1}$.  We have $hv_1=v_1$, $hv_2=-v_1-v_2$, and $hv_3=-v_3$.  Then $K$ is spanned by $\{ (v_1,v_1),(v_2,v_2),(v_3,v_3),(v_2,-v_1-v_2),(v_3,-v_3)\}$.  We obtain the Gram matrix
\begin{equation*}
G_K=
\begin{pmatrix}
4 & -2 & 0 & -2 & 0 \\
-2 & 4 & -2 & 1 & 0 \\
0 & -2 & 4 & 0 & 0 \\
-2 & 1 & 0 & 4 & -2 \\
0 & 0 & 0 & -2 & 4
\end{pmatrix},
\end{equation*}
which has determinant 256 and Smith invariants 1, 1, 4, 8, 8.  Using the process we used in \ref{A3transp} and choosing three independent vectors of order 4 in the dual, we can embed $K$ with index 8 in an integral lattice of determinant 4.

\subsection{Negative of 3-Cycle}
There are 6 elements of $O(A_3)$ that correspond to the negative of a 3-cycle.  We take $h=-r_{v_1}r_{v_2}$.  We have $hv_1=-v_2$, $hv_2=v_1+v_2$, and $hv_3=-v_1-v_2-v_3$.  Then $K$ is spanned by $\{ (v_1,v_1),(v_2,v_2),(v_3,v_3),(v_1,-v_2),(v_2,v_1+v_2),(v_3,-v_1-v_2-v_3)\}$.  We have
\begin{eqnarray}
(v_2,v_1+v_2)-(v_2,v_2)=(0,v_1) \nonumber \\
(v_1,v_1)-(0,v_1)=(v_1,0) \nonumber \\
(v_1,0)-(v_1,-v_2)=(0,v_2) \nonumber \\
(v_2,v_2)-(0,v_2)=(v_2,0) \nonumber \\
(v_3,-v_1-v_2-v_3)+(0,v_1)+(0,v_2)=(v_3,-v_3) \nonumber
\end{eqnarray}
and $K$ is thus spanned by $\{(v_1,0),(0,v_1),(v_2,0),(0,v_2),(v_3,v_3),(v_3,-v_3)\}$.  The Gram matrix
\begin{equation*}
G_K=
\begin{pmatrix}
2 & 0 & -1 & 0 & 0 & 0 \\
0 & 2 & 0 & -1 & 0 & 0 \\
-1 & 0 & 2 & 0 & -1 & -1 \\
0 & -1 & 0 & 2 & -1 & 1\\
0 & 0 & -1 & -1 & 4 & 0 \\
0 & 0 & -1 & 1 & 0 & 4
\end{pmatrix}
\end{equation*}
has determinant 64, so $K$ is a sublattice of index 2 in $L \perp L$.

\subsection{Negative of Product of Disjoint Transpositions}
There are 3 elements of $O(A_3)$ that correspond to negatives of products of disjoint transpositions.  We take $h=-r_{v_1}r_{v_3}$, so $hv_1=v_1$, $hv_2=-v_1-v_2-v_3$, and $hv_3=v_3$.  Then $K$ is spanned by $\{ (v_1,v_1),(v_2,v_2),(v_3,v_3),(v_2,-v_1-v_2-v_3)\}$.
The Gram matrix is 
\begin{equation*}
G_K=
\begin{pmatrix}
4 & -2 & 0 & -2 \\
-2 & 4 & -2 & -2 \\
0 & -2 & 4 & -2 \\
-2 & -2 & -2 & 4
\end{pmatrix},
\end{equation*}
which has determinant 64.  Note that $K/\sqrt{2}$ is an even integral lattice of rank 4 and determinant 4, and is thus isometric to $D_4$ (Theorem \ref{Rank4}).  We therefore have $K \cong \sqrt{2}D_4$.

\subsection{Negative of 4-Cycle}
There are 6 elements of $O(A_3)$ that correspond to negatives of 4-cycles.  We take $h=-r_{v_1}r{v_2}r_{v_3}$, so $hv_1=-v_2$, $hv_2=-v_3$, and $hv_3=v_1+v_2+v_3$.  Then $K$ is spanned by $\{ (v_1,v_1),(v_2,v_2),(v_3,v_3),(v_1,-v_2),(v_2-,v_3),(v_3,v_1+v_2+v_3)\}$. We can write the last vector as a linear combination of the others:
\begin{equation*}
(v_3,v_1+v_2+v_3)=(v_1,v_1)+(v_3,v_3)-(v_1,-v_2).
\end{equation*}
The remaining vectors are linearly independent and give the Gram matrix
\begin{equation*}
G_K=
\begin{pmatrix}
4 & -2 & 0 & 3 & -1 \\
-2 & 4 & -2 & -3 & 3 \\
0 & -2 & 4 & 1 & -3 \\
-3 & -3 & 1 & 4 & -2 \\
-1 & 3 & -3 & -2 & 4
\end{pmatrix},
\end{equation*}
which has determinant 32 and Smith invariants 1, 1, 2, 2, 8.

\section{$L=A_4$}
The isometry group of $A_4$ is $O(A_4)=Sym_5\times \langle -1 \rangle$, where we take the standard basis $\{v_1=e_1-e_2,v_2=e_2-e_3,v_3=e_3-e_4,v_4=e_4-e_5\} \subset \mathbb{R}^5$ for $A_4$ and $Sym_5$ acts on the set $\{e_1,e_2,e_3,e_4,e_5\}$.  There are 14 conjugacy classes, two (corresponding to a choice of $\pm 1$) for each partition of five elements into cycles.  The cases $h=\pm 1$ have been discussed.

\subsection{Transposition}
There are 10 elements of $O(A_4)$ that correspond to a transposition of two unit vectors $e_i,e_j$.  We take $h$ to be the transposition $(e_1,e_2)=r_{v_1}$, the reflection at the root $v_1$.  We have $hv_1=-v_1$, $hv_2=v_1+v_2$,  $hv_3=v_3$, and $hv_4=v_4$.  Then $K$ is spanned by $\{ (v_1,v_1),(v_2,v_2),(v_3,v_3),(v_4,v_4),(v_1,-v_1),(v_2,v_1+v_2)\}$.  Since $(v_2, v_1+v_2)-(v_2,v_2)=(0,v_1)$ and $(v_1,v_1)-(0,v_1)=(v_1,0)$, $K$ is spanned by $\{(v_1,0),(0,v_1),(v_2,v_2),(v_3,v_3),(v_4,v_4)\}$.  The Gram matrix with respect to this basis is 
\begin{equation*}
G_K=
\begin{pmatrix}
2 & 0 & -1 & 0 & 0 \\
0 & 2 & -1 & 0 & 0 \\
-1 & -1 & 4 & -2 & 0 \\
0 & 0 & -2 & 4 & -2 \\
0 & 0 & 0 & -2 & 4
\end{pmatrix},
\end{equation*}
which has determinant 80 and Smith invariants 1, 1, 2, 2, 20.  $K$ contains a sublattice $J\cong\sqrt{2}A_4 \perp \mathbb{Z}(v_1,-v_1)$ with index 2, and $K=J+\mathbb{Z}(v_1,0)$.

\subsection{Product of Disjoint Transpositions}
There are 15 elements of $O(A_4)$ that correspond to a product of disjoint transpositions.  We take $h=(e_1,e_2)(e_3,e_4)=r_{v_1}r_{v_3}$, and we have $hv_1=-v_1$, $hv_2=v_1+v_2+v_3$,  $hv_3=-v_3$, and $hv_4=v_3+v_4$.  Then $K$ is spanned by $\{ (v_1,v_1),(v_2,v_2),(v_3,v_3),(v_4,v_4),(v_1,-v_1),\\(v_2,v_1+v_2+v_3),(v_3,-v_3),(v_4,v_3+v_4)\}$.  We have
\begin{eqnarray}
(v_4,v_3+v_4)-(v_4,v_4)=(0,v_3) \nonumber \\
(v_3,v_3)-(0,v_3)=(v_3,0) \nonumber \\
(v_2,v_1+v_2+v_3)-(0,v_3)-(v_2,v_2)=(0,v_1) \nonumber \\
(v_1,v_1)-(0,v_1) =(v_1,0) \nonumber
\end{eqnarray}
and thus $K=span((v_1,0),(0,v_1),(v_2,v_2),(v_3,0),(0,v_3),(v_4,v_4))$.  The Gram matrix
\begin{equation*}
G_K=
\begin{pmatrix}
2 & 0 & -1 & 0 & 0 & 0 \\
0 & 2 & -1 & 0 & 0  & 0\\
-1 & -1 & 4 & -1 & -1 & 0 \\
0 & 0 & -1 & 2 & 0 & -1 \\
0 & 0 & -1 & 0 & 2 & -1 \\
0 & 0 & 0 & -1 & -1 & 4
\end{pmatrix}
\end{equation*}
has determinant 80 and Smith invariants 1, 1, 1, 1, 4, 20.   $K$ contains a sublattice $J\cong\sqrt{2}A_4 \perp \mathbb{Z}(v_1,-v_1) \perp \mathbb{Z}(v_3,-v_3)$ with index 4, and $K=J+\mathbb{Z}(v_1,0)+\mathbb{Z}(v_3,0)$.

\subsection{3-Cycle}
There are 20 elements of $O(A_4)$ that correspond to 3-cycles.  We take $h=(e_1,e_2,e_3)=r_{v_1}r_{v_2}$, and we have $hv_1=v_2$, $hv_2=-v_1-v_2$,  $hv_3=v_1+v_2+v_3$, and $hv_4=v_4$.  Then $K$ is spanned by $\{ (v_1,v_1),(v_2,v_2),(v_3,v_3),(v_4,v_4),(v_1,v_2),(v_2,-v_1-v_2),(v_3,v_1+v_2+v_3)\}$.  We have
\begin{eqnarray}
(v_2,-v_1-v_2)+(v_3,v_1+v_2+v_3)-(v_3,v_3)=(v_2,0) \nonumber \\
(v_2,v_2)-(v_2,0)=(0,v_2) \nonumber \\
(v_1,v_2)-(0,v_2)=(v_1,0) \nonumber \\
(v_1,v_1)-(v_1,0) =(0,v_1) \nonumber
\end{eqnarray}
and thus $K=span((v_1,0),(0,v_1),(v_2,0),(0,v_2),(v_3,v_3),(v_4,v_4))$.  The Gram matrix for this basis is 
\begin{equation*}
G_K=
\begin{pmatrix}
2 & 0 & -1 & 0 & 0 & 0 \\
0 & 2 & 0 & -1 & 0  & 0\\
-1 & 0 & 2 & 0 & -1 & 0 \\
0 & -1 & 0 & 2 & -1 & 0 \\
0 & 0 & -1 & -1 & 4 & -2 \\
0 & 0 & 0 & 0 & -2 & 4
\end{pmatrix},
\end{equation*}
which has determinant 60 and Smith invariants 1, 1, 1, 1, 2, 30.  We can identify $span((v_1,-v_1),(v_2,-v_2))$ with $\sqrt{2}A_2$, so $K$ contains a sublattice $J\cong\sqrt{2}A_4 \perp \sqrt{2}A_2$ with index 4, and $K=J+\mathbb{Z}(v_1,0)+\mathbb{Z}(v_2,0).$

\subsection{Product of 3-Cycle and Transposition}
There are 20 elements of $O(A_4)$ that correspond to products of disjoint 3-cycles and transpositions.  We take $h=(e_1,e_2,e_3)(e_4,e_5)=r_{v_1}r_{v_2}r_{v_4}$, and we have $hv_1=v_2$, $hv_2=-v_1-v_2$,  $hv_3=v_1+v_2+v_3+v_4$, and $hv_4=-v_4$.  Then $K$ is spanned by $\{ (v_1,v_1),(v_2,v_2),(v_3,v_3),(v_4,v_4),(v_1,v_2),(v_2,-v_1-v_2),(v_3,v_1+v_2+v_3+v_4),(v_4,-v_4)\}$.  We may write
\begin{align}
(v_1,v_2)+2(v_2,-v_1-v_2)+3(v_3,v_1+v_2+v_3+v_4)+(v_4,-v_4) &\nonumber \\
-\:(v_1,v_1)-2(v_2,v_2)-3(v_3,v_3)-(v_4,v_4)&=(0,v_4) \nonumber \\
(v_4,v_4)-(0,v_4)&=(v_4,0) \nonumber \\
(v_3,v_1+v_2+v_3+v_4)-(0,v_4)-(v_3,v_3)+(v_2,-v_1-v_2)&=(v_2,0) \nonumber \\
(v_2,v_2)-(v_2,0)&=(0,v_2) \nonumber \\
(v_1,v_2)-(0,v_2)&=(v_1,0) \nonumber \\
(v_1,v_1)-(v_1,0)&=(0,v_1) \nonumber
\end{align}
and thus $K=span((v_1,0),(0,v_1),(v_2,0),(0,v_2),(v_3,v_3),(v_4,0) ,(0,v_4))$.  The Gram matrix with respect to this basis is
\begin{equation*}
G_K=
\begin{pmatrix}
2 & 0 & -1 & 0 & 0 & 0 & 0 \\
0 & 2 & 0 & -1 & 0  & 0 & 0\\
-1 & 0 & 2 & 0 & -1 & 0 & 0 \\
0 & -1 & 0 & 2 & -1 & 0 & 0\\
0 & 0 & -1 & -1 & 4 & -1 & -1 \\
0 & 0 & 0 & 0 & -1 & 2 & 0 \\
0 & 0 & 0 & 0 & -1 & 0 & 2
\end{pmatrix},
\end{equation*}
which has determinant 60 and Smith invariants 1, 1, 1, 1, 1, 1, 60.  Again identifying $span((v_1,-v_1),(v_2,-v_2))$ with $\sqrt{2}A_2$, $K$ contains a sublattice $J\cong\sqrt{2}A_4 \perp \sqrt{2}A_2 \perp \mathbb{Z}(v_4,-v_4)$ with index 8, and $K=J+\mathbb{Z}(v_1,0)+\mathbb{Z}(v_2,0)+\mathbb{Z}(v_4,0)$.

\subsection{4-Cycle}
There are 30 elements of $O(A_4)$ that correspond to 4-cycles.  We take $h=(e_1,e_2,e_3,e_4)$, and we have $hv_1=v_2$, $hv_2=v_3$,  $hv_3=-v_1-v_2-v_3$, and $hv_4=v_1+v_2+v_3+v_4$.  Then $K$ is spanned by $\{ (v_1,v_1),(v_2,v_2),(v_3,v_3),(v_4,v_4),(v_1,v_2),(v_2,v_3),(v_3,-v_1-v_2-v_3),\\(v_4,v_1+v_2+v_3+v_4)\}$.  We may write
\begin{eqnarray}
(v_4,v_1+v_2+v_3+v_4)-(v_4,v_4)+(v_3,-v_1-v_2-v_3)=(v_3,0) \nonumber \\
(v_3,v_3)-(v_3,0)=(0,v_3) \nonumber \\
(v_2,v_3)-(0,v_3)=(v_2,0) \nonumber \\
(v_2,v_2)-(v_2,0)=(0,v_2) \nonumber \\
(v_1,v_2)-(0,v_2)=(v_1,0) \nonumber \\
(v_1,v_1)-(v_1,0)=(0,v_1) \nonumber
\end{eqnarray}
and thus $K=span((v_1,0),(0,v_1),(v_2,0),(0,v_2),(v_3,0),(0,v_3) ,(v_4,v_4))$.  The Gram matrix with respect to this basis is
\begin{equation*}
G_K=
\begin{pmatrix}
2 & 0 & -1 & 0 & 0 & 0 & 0 \\
0 & 2 & 0 & -1 & 0  & 0 & 0\\
-1 & 0 & 2 & 0 & -1 & 0 & 0 \\
0 & -1 & 0 & 2 & 0 & -1 & 0\\
0 & 0 & -1 & 0 & 2 & 0 & -1 \\
0 & 0 & 0 & -1 & 0 & 2 & -1 \\
0 & 0 & 0 & 0 & -1 & -1 & 4
\end{pmatrix},
\end{equation*}
which has determinant 40 and Smith invariants 1, 1, 1, 1, 1, 1, 40.  Identifying $span((v_1,-v_1),\\(v_2,-v_2),(v_3,-v_3))$ with $\sqrt{2}A_3$, $K$ contains a sublattice $J\cong\sqrt{2}A_4 \perp \sqrt{2}A_3$ with index 8, and $K=J+\mathbb{Z}(v_1,0)+\mathbb{Z}(v_2,0)+\mathbb{Z}(v_3,0)$.

\subsection{5-Cycle}
There are 24 elements of $O(A_4)$ that correspond to 5-cycles.  We take $h=(e_1,e_2,e_3,e_4,e_5)$, and we have $hv_1=v_2$, $hv_2=v_3$,  $hv_3=v_4$, and $hv_4=-v_1-v_2-v_3-v_4$.  Then $K$ is spanned by $\{ (v_1,v_1),(v_2,v_2),(v_3,v_3),(v_4,v_4),(v_1,v_2),(v_2,v_3),(v_3,v_4),(v_4,-v_1-v_2-v_3-v_4)\}$.  The Gram matrix for this basis is
\begin{equation*}
G_K=
\begin{pmatrix}
4 & -2 & 0 & 0 & 1 & -1 & 0 & -1 \\
-2 & 4 & -2 & 0 & 1  & 1 & -1 & 0\\
0 & -2 & 4 & -2 & -1 & 1 & 1 & -1 \\
0 & 0 & -2 & 4 & 0 & -1 & 1 & 1\\
1 & 1 & -1 & 0 & 4 & -2 & 0 & 0 \\
-1 & 1 & 1 & -1 & -2 & 4 & -2 & 0 \\
0 & -1 & 1 & 1 & 0 & -2 & 4 & -2 \\
-1 & 0 & -1 & 1 & 0 & 0 & -2 & 4 \\
\end{pmatrix},
\end{equation*}
which has determinant 625 and Smith invariants 1, 1, 1, 1, 5, 5, 5, 5.  Thus, $K$ is a sublattice of index 5 in the lattice $L \perp L$.

\subsection{Negative of Transposition}\label{A4NegTrans}
There are 10 elements of $O(A_4)$ that correspond to the negative of a transposition of two unit vectors $e_i,e_j$.  We take $h=-r_{v_1}$.  We have $hv_1=v_1$, $hv_2=-v_1-v_2$,  $hv_3=-v_3$, and $hv_4=-v_4$.  Then $K$ is spanned by $\{ (v_1,v_1),(v_2,v_2),(v_3,v_3),(v_4,v_4),(v_2,-v_1-v_2),\\(v_3,-v_3),(v_4-v_4)\}$. The Gram matrix with respect to this basis is
\begin{equation*}
G_K=
\begin{pmatrix}
4 & -2 & 0 & 0 & -2 & 0 & 0 \\
-2 & 4 & -2 & 0 & 1  & 0 & 0\\
0 & -2 & 4 & -2 & 0 & 0 & 0 \\
0 & 0 & -2 & 4 & 0 & 0 & 0\\
-2 & 1 & 0 & 0 & 4 & -2 & 0 \\
0 & 0 & 0 & 0 & -2 & 4 & -2 \\
0 & 0 & 0 & 0 & 0 & -2 & 4
\end{pmatrix},
\end{equation*} 
which has determinant 1600 and Smith invariants 1, 1, 2, 2, 2, 10, 20.  $K$ contains a sublattice $J\cong\sqrt{2}A_4 \perp \sqrt{2}A_2 \perp \mathbb{Z}w$, where we identify $span((v_3,-v_3),(v_4,-v_4))$ with $\sqrt{2}A_2$ and
\begin{align}
w&=3(v_1,v_1)+6(v_2,-v_1-v_2)+4(v_3,-v_3)+2(v_4-v_4) \nonumber \\
&=(3v_1+6v_2+4v_3+2v_4,-3v_1-6v_2-4v_3-2v_4) \nonumber
\end{align}
is a minimal vector in $ann_K(\sqrt{2}A_4 \perp \sqrt{2}A_2)$ and has norm 60.  Hence, $J$ has determinant $57600=1600\cdot 36$ and thus has index 6 in $K=J+\mathbb{Z}(v_2,-v_1-v_2)$.

\subsection{Negative of Product of Disjoint Transpositions}
There are 15 elements of $O(A_4)$ that correspond to a negative of a product of disjoint transpositions.  We take $h=-r_{v_1}r_{v_3}$, and we have $hv_1=v_1$, $hv_2=-v_1-v_2-v_3$,  $hv_3=v_3$, and $hv_4=-v_3-v_4$.  Then $K$ is spanned by $\{ (v_1,v_1),(v_2,v_2),(v_3,v_3),(v_4,v_4),\\(v_2,-v_1-v_2-v_3),(v_4,-v_3-v_4)\}$.  The Gram matrix with respect to this basis is
\begin{equation*}
G_K=
\begin{pmatrix}
4 & -2 & 0 & 0 & -2 & 0 \\
-2 & 4 & -2 & 0 & 2  & 1\\
0 & -2 & 4 & -2 & -2 & -2 \\
0 & 0 & -2 & 4 & 1 & 1 \\
-2 & 2 & -2 & 1 & 4 & 0 \\
0 & 1 & -2 & 1 & 0 & 4
\end{pmatrix},
\end{equation*}
which has determinant 400 and Smith invariants 1, 1, 1, 1, 20, 20.

\subsection{Negative of 3-Cycle}
There are 20 elements of $O(A_4)$ that correspond to negatives of 3-cycles.  We take $h=-(e_1,e_2,e_3)=-r_{v_1}r_{v_2}$, and we have $hv_1=-v_2$, $hv_2=v_1+v_2$,  $hv_3=-v_1-v_2-v_3$, and $hv_4=-v_4$.  Then $K$ is spanned by $\{ (v_1,v_1),(v_2,v_2),(v_3,v_3),(v_4,v_4),(v_1,-v_2),(v_2,v_1+v_2),\\(v_3,-v_1-v_2-v_3), (v_4, -v_4)\}$.  We may write
\begin{eqnarray}
(v_2,v_1+v_2)-(v_2,v_2)=(0,v_1) \nonumber \\
(v_1,v_1)-(0,v_1)=(v_1,0) \nonumber \\
(v_1,0)-(v_1,-v_2)=(0,v_2) \nonumber \\
(v_2,v_2)-(0,v_2)=(v_2,0) \nonumber
\end{eqnarray}
and thus $K$ is spanned by $\{(v_1,0),(0,v_1),(v_2,0),(0,v_2),(v_3,v_3),(v_4,v_4),(v_3,-v_3),(v_4, -v_4)\}$.  This gives the Gram matrix
\begin{equation*}
G_K=
\begin{pmatrix}
2 & 0 & -1 & 0 & 0 & 0 & 0 & 0\\
0 & 2 & 0 & -1 & 0  & 0 & 0 & 0\\
-1 & 0 & 2 & 0 & -1 & 0 & -1 & 0 \\
0 & -1 & 0 & 2 & -1 & 0 & 1 & 0\\
0 & 0 & -1 & -1 & 4 & -2 & 0  & 0\\
0 & 0 & 0 & 0 & -2 & 4 & 0 & 0 \\
0 & 0 & -1 & -1 & 0 & 0 & 4 & -2 \\
0 & 0 & 0 & 0 & 0 & 0 & -2 & 4
\end{pmatrix},
\end{equation*} 
which has determinant 400 and Smith invariants 1, 1, 1, 1, 2, 2, 10, 10.  Identifying $span((v_1,-v_1),(v_2,-v_2),(v_3,-v_3),(v_4,-v_4))$ with $\sqrt{2}A_4$, we see that $K$ contains a sublattice $J\cong\sqrt{2}A_4 \perp \sqrt{2}A_4$ with index 4, and we may write $K=J+\mathbb{Z}(v_1,0)+\mathbb{Z}(v_2,0)$.

\subsection{Negative of Product of 3-Cycle and Transposition}
There are 20 elements of $O(A_4)$ that correspond to negatives of products of disjoint 3-cycles and transpositions.  We take $h=-(e_1,e_2,e_3)(e_4,e_5)=-r_{v_1}r_{v_2}r_{v_4}$, and we have $hv_1=-v_2$, $hv_2=v_1+v_2$,  $hv_3=-v_1-v_2-v_3-v_4$, and $hv_4=v_4$.  Then $K$ is spanned by $\{ (v_1,v_1),(v_2,v_2),(v_3,v_3),(v_4,v_4),(v_1,-v_2),(v_2,v_1+v_2),(v_3,-v_1-v_2-v_3-v_4)\}$. We have
\begin{eqnarray}
(v_2,v_1+v_2)-(v_2,v_2)=(0,v_1) \nonumber \\
(v_1,v_1)-(0,v_1)=(v_1,0) \nonumber \\
(v_1,0)-(v_1,-v_2)=(0,v_2) \nonumber \\
(v_2,v_2)-(0,v_2)=(v_2,0) \nonumber
\end{eqnarray}
and thus $K=span((v_1,0),(0,v_1),(v_2,0),(0,v_2),(v_3,v_3),(v_4,v_4),(v_3,-v_3-v_4))$.  The Gram matrix
\begin{equation*}
G_K=
\begin{pmatrix}
2 & 0 & -1 & 0 & 0 & 0 & 0 \\
0 & 2 & 0 & -1 & 0  & 0 & 0\\
-1 & 0 & 2 & 0 & -1 & 0 & -1 \\
0 & -1 & 0 & 2 & -1 & 0 & 1\\
0 & 0 & -1 & -1 & 4 & -2 & 1 \\
0 & 0 & 0 & 0 & -2 & 4 & -2 \\
0 & 0 & -1 & 1 & 1 & -2 & 4
\end{pmatrix}
\end{equation*} 
has determinant 100 and Smith invariants 1, 1, 1, 1, 5, 20. $K$ contains a sublattice $J\cong\sqrt{2}A_4 \perp \sqrt{2}A_2 \perp \mathbb{Z}w$, where we identify $span((v_1,-v_1),(v_2,-v_2))$ with $\sqrt{2}A_2$ and
\begin{align}
w&=2(v_1,0)-2(0,v_1)+4(v_2,0)-4(0,v_2)+3(v_4,v_4)+6(v_3,-v_3-v_4) \nonumber \\
&=(2v_1+4v_2+6v_3+3v_4,-2v_1-4v_2-6v_3-3v_4) \nonumber
\end{align}
is a minimal vector in $ann_K(\sqrt{2}A_4 \perp \sqrt{2}A_2)$ and has norm 60.  Hence, $J$ has determinant $57600=100\cdot 24^2$ and thus has index 24 in $K=J+\mathbb{Z}(v_1,0)+\mathbb{Z}(v_2,0)+\mathbb{Z}(v_3,-v_3-v_4)$.  The $J$ identified here is isometric to the $J$ from \ref{A4NegTrans}, and the lattice $K$ in the former case is a sublattice of index 4 in the current lattice.

\subsection{Negative of 4-Cycle}
There are 30 elements of $O(A_4)$ that correspond to negatives of 4-cycles.  We take $h=-(e_1,e_2,e_3,e_4)$, and we have $hv_1=-v_2$, $hv_2=-v_3$,  $hv_3=v_1+v_2+v_3$, and $hv_4=\\-v_1-v_2-v_3-v_4$.  Then $K$ is spanned by $\{ (v_1,v_1),(v_2,v_2),(v_3,v_3),(v_4,v_4),(v_1,-v_2),(v_2,-v_3),\\(v_3,v_1+v_2+v_3),(v_4,-v_1-v_2-v_3-v_4)\}$.  We have
\begin{equation*}
(v_3,v_1+v_2+v_3)=(v_3,v_3)+(v_1,v_1)-(v_1,-v_2)
\end{equation*}
and the remaining vectors are linearly independent, so $K=span((v_1,v_1),(v_2,v_2),(v_3,v_3),(v_4,v_4),\\
(v_1,-v_2),(v_2,-v_3),(v_4,-v_1-v_2-v_3-v_4))$ has Gram matrix
\begin{equation*}
G_K=
\begin{pmatrix}
4 & -2 & 0 & 0 & 3 & -1 & -1 \\
-2 & 4 & -2 & 0 & -3  & 3 & 0\\
0 & -2 & 4 & -2 & 1 & -3 & -1 \\
0 & 0 & -2 & 4 & 0 & 1 & 1\\
3 & -3 & 1 & 0 & 4 & -2 & 0 \\
-1 & 3 & -3 & 1 & -2 & 4 & 0 \\
-1 & 0 & -1 & 1 & 0 & 0 & 4
\end{pmatrix},
\end{equation*}
determinant 200, and Smith invariants 1, 1, 1, 1, 5, 40.

\subsection{Negative of 5-Cycle}
There are 24 elements of $O(A_4)$ that correspond to negatives of 5-cycles.  We take $h=-(e_1,e_2,e_3,e_4,e_5)$, and we have $hv_1=-v_2$, $hv_2=-v_3$,  $hv_3=-v_4$, and $hv_4=v_1+v_2+v_3+v_4$.  Then $K$ is spanned by $\{ (v_1,v_1),(v_2,v_2),(v_3,v_3),(v_4,v_4),(v_1,-v_2),(v_2,-v_3),(v_3,-v_4),\\(v_4,v_1+v_2+v_3+v_4)\}$.  We have
\begin{align}
(v_4,v_1+v_2+v_3+v_4)-(v_4,v_4)+(v_2,-v_3)-(v_2,v_2)&=(0,v_1) \nonumber \\
(v_1,v_1)-(0,v_1)&=(v_1,0) \nonumber \\
(v_1,0)-(v_1,-v_2)&=(0,v_2) \nonumber \\
(v_2,v_2)-(0,v_2)&=(v_2,0) \nonumber \\
(v_2,0)-(v_2,-v_3)&=(0,v_3) \nonumber \\
(v_3,v_3)-(0,v_3)&=(v_3,0) \nonumber \\
(v_3,0)-(v_3,-v_4)&=(0,v_4) \nonumber \\
(v_4,v_4)- (0, v_4)&=(v_4,0) \nonumber
\end{align}
and these vectors span the whole lattice $L \perp L$.

\section{$L=D_4$}
The isometry group of $D_4$ has order $1152=2^7 \cdot 3^2=3 \cdot 2^4 \cdot 4!$.  It contains the monomial group on the orthonormal basis ${e_1,e_2,e_3,e_4}\subset \mathbb{R}^4$ with index 3, and acts as $Sym_3$ on both the set of frames of roots in $L$ and the set of frames of unit vectors in $L^*$.\footnote{By \emph{frame}, we mean a set of 8 vectors in which any two vectors are either orthogonal or negatives of each other.  See \cite{Griess} for more details on the frames and the isometry group of $D_4$.}  There are 25 conjugacy classes in $O(D_4)$.  This fact along with other useful information about the group is taken from \cite{Atlas}. We take the standard basis $\{v_1=e_1-e_2,v_2=e_2-e_3,v_3=e_3-e_4,v_4=e_3+e_4\}$  for $D_4$.

\subsection{Reflection at a root ($\pm e_i\pm e_j$)}
Let $h$ be the reflection $r_{v_1}$, which switches $e_1$ and $e_2$ and fixes $e_3$ and $e_4$.  We have $hv_1=-v_1$, $hv_2=v_1+v_2$,  $hv_3=v_3$, and $hv_4=v_4$.  Then $K$ is spanned by $\{ (v_1,v_1),(v_2,v_2),(v_3,v_3),\\(v_4,v_4),(v_1,-v_1),(v_2,v_1+v_2)\}$.  Since $(v_2,v_1+v_2)-(v_2,v_2)=(0,v_1)$ and $(v_1,v_1)-(0,v_1)=(v_1,0)$, $K=span((v_1,0),(0,v_1),(v_2,v_2),(v_3,v_3),(v_4,v_4))$.  This gives the Gram matrix
\begin{equation*}G_K=
\begin {pmatrix} 2&0&-1&0&0\\
0&2&-1&0&0\\
-1&-1&4&-2&-2\\
0&0&-2&4&0\\
0&0&-2&0&4\end {pmatrix},
\end{equation*}
which has determinant 64 and Smith invariants 1, 1, 4, 4, 4.  $K$ contains a sublattice $J\cong\sqrt{2}D_4 \perp \mathbb{Z}(v_1,-v_1)$ with index 2, and $K=J+\mathbb{Z}(v_1,0)$.  Since every vector in $K^*$ has order dividing 4, we have $(2v,2w)=(4v,w)\in \mathbb{Z}$ for all $v,w\in K^*$, so $H:=K+2K^*$ is an integral lattice, and $H$ contains $K$ with index 8.  Thus, $H$ has determinant 1, so $H\cong \mathbb{Z}^5$ by Theorem \ref{Unimodular}.

\subsection{Negation of One Unit Vector}
Let $h$ be the element of $O(D_4)$ which negates $e_1$ and fixes $e_2$, $e_3$ and $e_4$.  By Theorem \ref{DnNegations}, we have $K\cong \sqrt{2}D_5$.

\subsection{Negation of Two Unit Vectors/Product of Reflections at Orthogonal Roots}
Let $h$ be the element of $O(D_4)$ which negates $e_1$ and $e_2$ and fixes $e_3$ and $e_4$.  We can write $h$ as a product of reflections at orthogonal roots: $h=r_{e_1+e_2}r_{e_1-e_2}$.  By Theorem \ref{DnNegations}, $K \cong \sqrt{2}D_6$.

\subsection{Negation of Three Unit Vectors}
Let $h$ be the element of $O(D_4)$ which negates $e_1$, $e_2$, and $e_3$ and fixes $e_4$.  By Theorem \ref{DnNegations}, $K\cong \sqrt{2}D_7$.

\subsection{Product of Reflection at a Root and Negation of the Remaining Two Unit Vectors}
Let $h$ be the element of $O(D_4)$ which switches $e_1$ and $e_2$ and negates $e_3$ and $e_4$.  We can write $h=r_{e_1-e_2}r_{e_3-e_4}r_{e_3+e_4}$.  We have $hv_1=-v_1$, $hv_2=v_1+v_2+v_3+v_4$,  $hv_3=-v_3$, and $hv_4=-v_4$.  Then $K$ is spanned by $\{ (v_1,v_1),(v_2,v_2),(v_3,v_3),(v_4,v_4),(v_1,-v_1),(v_2,v_1+v_2+v_3+v_4),(v_3,-v_3),(v_4,-v_4)\}$.  We can write the last vector as a linear combination of the others:
\begin{equation*}
(v_4,-v_4)=(v_1,v_1)+2(v_2,v_2)+(v_3,v_3)+(v_4,v_4)-(v_1,-v_1)-2(v_2,v_1+v_2+v_3+v_4)-(v_3,-v_3).
\end{equation*}
The remaining vectors are linearly independent and give the Gram matrix
\begin{equation*} G_K=
\begin {pmatrix}
4&-2&0&0&0&0&0\\
-2&4&-2&-2&0&1&0\\
0&-2&4&0&0&0&0\\
0&-2&0&4&0&0&0\\
0&0&0&0&4&-2&0\\
0&1&0&0&-2&4&-2\\
0&0&0&0&0&-2&4
\end{pmatrix},
\end{equation*}
which has determinant 1024 and Smith invariants 1, 1, 4, 4, 4, 4, 4.  Since every vector in $K^*$ has order dividing 4, $H:=K+2K^*$ is an integral lattice.  Since $H$ contains $K$ with index 32, $H$ has determinant 1 and is therefore isometric to $\mathbb{Z}^7$.

\subsection{Product of Reflection at a Root and Negation of One of the Remaining Unit Vectors}
Let $h$ be the element of $O(D_4)$ which switches $e_1$ and $e_2$, negates $e_3$, and fixes $e_4$.  We have $hv_1=-v_1$, $hv_2=v_1+v_2+v_3+v_4$,  $hv_3=-v_4$, and $hv_4=-v_3$.  Then $K$ is spanned by $\{ (v_1,v_1),(v_2,v_2),(v_3,v_3),(v_4,v_4),(v_1,-v_1),(v_2,v_1+v_2+v_3+v_4),(v_3,-v_4),(v_4,-v_3)\}$.  We may write
\begin{eqnarray}
(v_1,v_1)+2(v_2,v_2)+2(v_3,v_3)-2(v_2,v_1+v_2+v_3+v_4)-2(v_3,-v_4)=(v_1,-v_1)\nonumber \\
-(v_3,v_3)+(v_4,v_4)+(v_3,-v_4)=(v_4,-v_3),\nonumber
\end{eqnarray}
and the remaining six vectors are linearly independent and give the Gram matrix
\begin{equation*} G_K=
\begin {pmatrix}
4&-2&0&0&0&0\\
-2&4&-2&-2&1&0\\
0&-2&4&0&0&2\\
0&-2&0&4&0&-2\\
0&1&0&0&4&-2\\
0&0&2&-2&-2&4
\end{pmatrix},
\end{equation*}
which has determinant 128 and Smith invariants 1, 1, 2, 2, 4, 8.  If we let $u_1$ and $u_2$ be two independent vectors in $K^*$ with order $4 \pmod{K}$, we obtain the integral lattice $J:=K+2\mathbb{Z}u_1+2\mathbb{Z}u_2$, which contains $K$ with index 4 and thus has determinant 8.  If the discriminant group of $J$ is not elementary abelian, there exists a vector $u_3$ with order $4 \pmod{J}$ and we can embed $J$ in the integral lattice $J+2\mathbb{Z}u_3$, which has determinant 2.  Otherwise, there exists a nontrivial coset $J+u_3$ such that $u_3$ has integer norm (Theorem \ref{2group}), and hence $J+\mathbb{Z}u_3$ is an integral lattice of determinant 2.  In either case, this lattice is either rectangular or isometric to $E_7$ (Theorem \ref{E7uniqueness}).

\subsection{3-Cycle of Unit Vectors}
Let $h$ be the element of $O(D_4)$ which acts as the 3-cycle $(e_1,e_2,e_3)$ on the standard basis of $\mathbb{R}^4$.  We can write $h=r_{e_1-e_2}r_{e_2-e_3}$.  We have $hv_1=v_2$, $hv_2=-v_1-v_2$,  $hv_3=v_1+v_2+v_3$, and $hv_4=v_1+v_2+v_4$.  Then $K$ is spanned by $\{ (v_1,v_1),(v_2,v_2),(v_3,v_3),(v_4,v_4),(v_1,v_2),(v_2,-v_1-v_2),(v_3,v_1+v_2+v_3),(v_4,v_1+v_2+v_4)\}$.
We may write
\begin{eqnarray}
-(v_3,v_3)+(v_4,v_4)+(v_3,v_1+v_2+v_3)=(v_4,v_1+v_2+v_4) \nonumber \\
(v_1,v_1)+2(v_2,v_2)+3(v_3,v_3)-2(v_2,-v_1-v_2)-3(v_3,v_1+v_2+v_3)=(v_1,v_2), \nonumber
\end{eqnarray}
and the remaining six vectors give the Gram matrix
\begin{equation*} G_K=
\begin {pmatrix}
4&-2&0&0&-2&1\\
-2&4&-2&-2&1&-1\\
0&-2&4&0&0&3\\
0&-2&0&4&0&-1\\
-2&1&0&0&4&-2\\
1&-1&3&-1&-2&4
\end{pmatrix},
\end{equation*}
which has determinant 48 and Smith invariants 1, 1, 1, 1, 4, 12.  We can then embed $K$ with index 4 in an integral lattice of determinant 3 by choosing vectors of order 4 in the dual. Such a lattice is either rectangular or isometric to $A_2 \perp \mathbb{Z}^4$ or $E_6$ (Theorem \ref{E6uniqueness}).

\subsection{3-cycle of Roots} \label{D4Root3Cycle}
Let $h$ be the element of $O(D_4)$ which acts as the 3-cycle $(v_1,v_3,v_4)$ on the vectors in the frame (of roots) containing these roots (and fixes the remaining vector in this frame, which is $e_1+e_2=v_1+2v_2+v_3+v_4$).   We can express $h$ in this case as the matrix
\begin{equation*}
\begin{pmatrix}
1/2&1/2&1/2&1/2\\
1/2&1/2&-1/2&-1/2\\
1/2&-1/2&1/2&-1/2\\
-1/2&1/2&1/2&-1/2
\end{pmatrix}.
\end{equation*}
In this case, we have $hv_1=v_3$, $hv_2=v_2$,  $hv_3=v_4$, and $hv_4=v_1$.  Then $K$ is spanned by $\{ (v_1,v_1),(v_2,v_2),(v_3,v_3),(v_4,v_4),(v_1,v_3),(v_3,v_4),(v_4,v_1)\}$.  We can write $(v_4,v_1)=(v_1,v_1)+(v_3,v_3)+(v_4,v_4)-(v_1,v_3)-(v_3,v_4)$, and the remaining vectors are linearly independent and give the Gram matrix
\begin{equation*} G_K=
\begin {pmatrix}
4&-2&0&0&2&0\\
-2&4&-2&-2&-2&-2\\
0&-2&4&0&2&2\\
0&-2&0&4&0&2\\
2&-2&2&0&4&0\\
0&-2&2&2&0&4
\end{pmatrix},
\end{equation*}
which has determinant 192 and Smith invariants 2, 2, 2, 2, 2, 6.  We see that $K/\sqrt{2}$ is an even integral lattice of determinant 3, and is thus isometric to $E_6$ (Theorem \ref{E6uniqueness}), so $K \cong \sqrt{2}E_6$.

\subsection{Element of Order 3 That Fixes No Frames of Roots or Unit Vectors} \label{D4-3}
The 3-cycles mentioned above all fix either a frame of roots or a frame of unit vectors; we now seek an element which fixes no frames of either type.  Let $h$ be the element represented by the matrix
\begin{equation*}
\begin{pmatrix}
-1/2&-1/2&-1/2&-1/2\\
1/2&-1/2&1/2&-1/2\\
1/2&-1/2&-1/2&1/2\\
1/2&1/2&-1/2&-1/2
\end{pmatrix}.
\end{equation*}
We can check that this matrix satisifies the condition $h^2+h+1=0$, so by Theorem \ref{A2Tensor} $K\cong A_2\otimes D_4$.

\subsection{4-Cycle of Unit Vectors}
Let $h$ be the element of $O(D_4)$ which acts as the 4-cycle $(e_1,e_2,e_3,e_4)$ on the standard basis of $\mathbb{R}^4$.  We can write $h=r_{e_1-e_2}r_{e_2-e_3}r_{e_3-e_4}$.  We have $hv_1=v_2$, $hv_2=v_3$,  $hv_3=-v_1-v_2-v_3$, and $hv_4=v_1+v_2+v_4$.  Then $K$ is spanned by $\{ (v_1,v_1),(v_2,v_2),(v_3,v_3),(v_4,v_4),(v_1,v_2),(v_2,v_3),(v_3,-v_1-v_2-v_3),(v_4,v_1+v_2+v_4)\}$.  We may write $(v_3,-v_1-v_2-v_3)=(v_1,v_1)+2(v_2,v_2)+(v_3,v_3)+2(v_4,v_4)-(v_1,v_2)-2(v_2,v_3)-2(v_4,v_1+v_2+v_4)$, and the remaning vectors give the Gram matrix
\begin{equation*} G_K=
\begin {pmatrix}
4&-2&0&0&1&-1&1\\
-2&4&-2&-2&1&1&-1\\
0&-2&4&0&-1&1&-1\\
0&-2&0&4&-1&-1&3\\
1&1&-1&-1&4&-2&0\\
-1&1&1&-1&-2&4&-2\\
1&-1&-1&3&0&-2&4
\end{pmatrix},
\end{equation*}
which has determinant 128 and Smith invariants 1, 1, 2, 2, 2, 2, 8.  By choosing a vector $u \in K^*$ of order $4 \pmod{K}$, we can embed $K$ with index 2 in the integral lattice $K+2\mathbb{Z}u$ of determinant 32.  If there exists a vector $w$ of order 4 in the discriminant group of this lattice, we can further embed $K$ in a lattice of determinant 8 in the same way.  Otherwise, the discriminant group is elementary abelian of order 32 and we can choose vector $w$ of order 2 that has integer norm (Theorem \ref{2group}).  Then $K+2\mathbb{Z}u+\mathbb{Z}w$ is an integral lattice of determinant 8.  Using the same method again, we can finally embed $K$ in an integral lattice of determinant 2, which is either rectangular or isometric to $E_7$ (Theorem \ref{E7uniqueness}).

We can also embed $K$ in the $A_7$ lattice, since by Theorem \ref{ncyclethm}, the overlattice obtained by applying $h$ to $\mathbb{Z}^4$ is isometric to $A_7$.

\subsection{Transposition of Unit Vectors with a Negation}
Let $h$ be the element of $O(D_4)$ which sends $e_1$ to $-e_2$, $e_2$ to $e_1$, and fixes $e_3$ and $e_4$.  We have $hv_1=-v_1-2v_2-v_3-v_4$, $hv_2=v_1+v_2$,  $hv_3=v_3$, and $hv_4=v_4$, so $K$ is spanned by $\{ (v_1,v_1),(v_2,v_2),(v_3,v_3),(v_4,v_4),(v_1,-v_1-2v_2-v_3-v_4),(v_2,v_1-v_2)\}$.  This gives the Gram matrix
\begin{equation*} G_K=
\begin {pmatrix}
4&-2&0&0&2&0\\
-2&4&-2&-2&-2&3\\
0&-2&4&0&0&-2\\
0&-2&0&4&0&-2\\
2&-2&0&0&4&-2\\
0&3&-2&-2&-2&4
\end{pmatrix},
\end{equation*}
which has determinant 64 and Smith invariants 1, 1, 2, 2, 4, 4.  By choosing two independent vectors $u_1$ and $u_2$ with order 4 in the discriminant group, we can embed $K$ with index 4 in the integral lattice $J:=K+2\mathbb{Z}u_1+2\mathbb{Z}u_2$ of determinant 4.  If the discriminant group of $J$ is not elementary abelian, there exists a vector $u_3$ with order $4 \pmod{J}$ and we can embed $J$ in the integral lattice $J+2\mathbb{Z}u_3$.  Otherwise, there exists a nontrivial coset $J+u_3$ such that $u_3$ has integer norm (Theorem \ref{2group}), and hence $J+\mathbb{Z}u_3$ is an integral lattice.  In each case, the resulting integral lattice contains $J$ with index 2, and thus has determinant 1 and is isometric to $\mathbb{Z}^6$.

\subsection{Product of Transposition of Unit Vectors with a Negation and Negation of the Other Unit Vectors}
Let $h$ be the element of $O(D_4)$ which sends $e_1$ to $-e_2$, $e_2$ to $e_1$ and negates $e_3$ and $e_4$.  We have $hv_1=-v_1-2v_2-v_3-v_4$, $hv_2=v_1+v_2+v_3+v_4$,  $hv_3=-v_3$, and $hv_4=-v_4$, so $K$ is spanned by $\{ (v_1,v_1),(v_2,v_2),(v_3,v_3),(v_4,v_4),(v_1,-v_1-2v_2-v_3-v_4),(v_2,v_1+v_2+v_3+v_4),(v_3,-v_3),(v_4,-v_4)\}$.  These vectors give the Gram matrix
\begin{equation*} G_K=
\begin {pmatrix}
4&-2&0&0&2&0&0&0\\
-2&4&-2&-2&-2&1&0&0\\
0&-2&4&0&0&0&0&0\\
0&-2&0&4&0&0&0&0\\
2&-2&0&0&4&-2&0&0\\
0&1&0&0&-2&4&-2&-2\\
0&0&0&0&0&-2&4&0\\
0&0&0&0&0&-2&0&4
\end{pmatrix},
\end{equation*}
which has determinant 1024 and Smith invariants 1, 1, 2, 2, 4, 4, 4, 4.  Thus, $K$ is a sublattice of index 8 in $D_4 \perp D_4$.

\subsection{Product of Transposition of Unit Vectors with a Negation and Transposition of the Remaining Unit Vectors}
Let $h$ be the element of $O(D_4)$ which sends $e_1$ to $-e_2$, $e_2$ to $e_1$ and switches $e_3$ and $e_4$.   We have $hv_1=-v_1-2v_2-v_3-v_4$, $hv_2=v_1+v_2+v_3$,  $hv_3=-v_3$, and $hv_4=v_4$, so $K$ is spanned by $\{ (v_1,v_1),(v_2,v_2),(v_3,v_3),(v_4,v_4),(v_1,-v_1-2v_2-v_3-v_4),(v_2,v_1+v_2+v_3),(v_3,-v_3)\}$.  These vectors give the Gram matrix
\begin{equation*} G_K=
\begin {pmatrix}
4&-2&0&0&2&0&0\\
-2&4&-2&-2&-2&2&0\\
0&-2&4&0&0&0&0\\
0&-2&0&4&0&-2&0\\
2&-2&0&0&4&-2&0\\
0&2&0&-2&-2&4&-2\\
0&0&0&0&0&-2&4
\end{pmatrix},
\end{equation*}
which has determinant 256 and Smith invariants 2, 2, 2, 2, 2, 2, 4.  The lattice $K/\sqrt{2}$ is therefore an even integral lattice of determinant 2, which is isometric to $E_7$ (Theorem \ref{E7uniqueness}), so $K\cong \sqrt{2}E_7$.

\subsection{Product of Transposition of Unit Vectors with a Negation and Transposition of the Remaining Unit Vectors with a Negation}
Let $h$ be the element of $O(D_4)$ which sends $e_1$ to $-e_2$, $e_2$ to $e_1$, $e_3$ to $-e_4$, and $e_4$ to $e_3$.  We have $hv_1=-v_1-2v_2-v_3-v_4$, $hv_2=v_1+v_2+v_4$,  $hv_3=-v_4$, and $hv_4=v_3$, so $K$ is spanned by $\{ (v_1,v_1),(v_2,v_2),(v_3,v_3),(v_4,v_4),(v_1,-v_1-2v_2-v_3-v_4),(v_2,v_1+v_2+v_4),(v_3,-v_4),(v_4,v_3)\}$.  These vectors give the Gram matrix
\begin{equation*} G_K=
\begin {pmatrix}
4&-2&0&0&2&0&0&0\\
-2&4&-2&-2&-2&2&0&-2\\
0&-2&4&0&0&-2&2&2\\
0&-2&0&4&0&0&-2&2\\
2&-2&0&0&4&-2&0&0\\
0&2&-2&0&-2&4&-2&-2\\
0&0&2&-2&0&-2&4&0\\
0&-2&2&2&0&-2&0&4
\end{pmatrix},
\end{equation*}
which has determinant 256 and Smith invariants 2, 2, 2, 2, 2, 2, 2, 2.  The lattice $K/\sqrt{2}$ is therefore an even integral lattice of determinant 1, which is isometric to $E_8$ and thus $K\cong \sqrt{2}E_8$.

\subsection{Negative of 3-Cycle of Unit Vectors}
Let $h$ be the element of $O(D_4)$ which acts as the negative of the 3-cycle $(e_1,e_2,e_3)$ on the standard basis of $\mathbb{R}^4$.  We have $hv_1=-v_2$, $hv_2=v_1+v_2$,  $hv_3=-v_1-v_2-v_3$, and $hv_4=-v_1-v_2-v_4$.  Then $K$ is spanned by $\{ (v_1,v_1),(v_2,v_2),(v_3,v_3),(v_4,v_4),(v_1,-v_2),(v_2,v_1+v_2),(v_3,-v_1-v_2-v_3),(v_4,-v_1-v_2-v_4)\}$.  This gives the Gram matrix
\begin{equation*} G_K=
\begin {pmatrix}
4&-2&0&0&3&0&-1&-1\\
-2&4&-2&-2&-3&3&-1&-1\\
0&-2&4&0&1&-2&1&1\\
0&-2&0&4&1&-2&1&1\\
3&-3&1&1&4&-2&0&0\\
0&3&-2&-2&-2&4&-2&-2\\
-1&-1&1&1&0&-2&4&0\\
-1&-1&1&1&0&-2&0&4
\end{pmatrix},
\end{equation*}
which has determinant 256 and Smith invariants 1, 1, 1, 1, 4, 4, 4, 4, and thus $K$ is a sublattice of index 4 in $D_4\perp D_4$.

\subsection{Negative of 3-cycle of Roots} 
Let $h$ be the negative of the element discussed in \ref{D4Root3Cycle}, which acts as the 3-cycle $(v_1,v_3,v_4)$ on the vectors in the frame (of roots) containing these roots.  In this case, we have $hv_1=-v_3$, $hv_2=-v_2$,  $hv_3=-v_4$, and $hv_4=-v_1$.  Then $K$ is spanned by $\{ (v_1,v_1),(v_2,v_2),(v_3,v_3),\\(v_4,v_4),(v_1,-v_3),(v_2,-v_2),(v_3,-v_4),(v_4,-v_1)\}$.  These vectors give the Gram matrix
\begin{equation*} G_K=
\begin {pmatrix}
4&-2&0&0&2&0&0&-2\\
-2&4&-2&-2&0&0&0&0\\
0&-2&4&0&-2&0&2&0\\
0&-2&0&4&0&0&-2&2\\
2&0&-2&0&4&-2&0&0\\
0&0&0&0&-2&4&-2&-2\\
0&0&2&-2&0&-2&4&0\\
-2&0&0&2&0&-2&0&4
\end{pmatrix},
\end{equation*}
which has determinant 256 and Smith invariants 2, 2, 2, 2, 2, 2, 2, 2.  The lattice $K/\sqrt{2}$ is therefore an even integral lattice of determinant 1, which is isometric to $E_8$ and thus $K\cong \sqrt{2}E_8$.

\subsection{Negative of Element of Order 3 That Fixes No Frames of Roots or Unit Vectors}
Let $h$ be the negative of the element discussed in \ref{D4-3}.  We have $hv_1=-v_2-v_3-v_4$, $hv_2=v_2+v_3$,  $hv_3=-v_2$, and $hv_4=v_1+v_2+v_4$. Then $K$ is spanned by $\{ (v_1,v_1),(v_2,v_2),(v_3,v_3),\\(v_4,v_4),(v_1,-v_2-v_3-v_4),(v_2,v_2+v_3),(v_3,-v_2),(v_4,v_1+v_2+v_4)\}$.  The Gram matrix
\begin{equation*} G_K=
\begin {pmatrix}
4&-2&0&0&3&-2&1&1\\
-2&4&-2&-2&-1&3&-3&-1\\
0&-2&4&0&-1&0&3&-1\\
0&-2&0&4&-1&-2&1&3\\
3&-1&-1&-1&4&-2&0&0\\
-2&3&0&-2&-2&4&-2&-2\\
1&-3&3&1&0&-2&4&0\\
1&-1&-1&3&0&-2&0&4
\end{pmatrix}
\end{equation*}
has determinant 16 and thus $K$ is the whole lattice $D_4 \perp D_4$.

\subsection{Product of 3-cycle of Unit Vectors and Negation of the Remaining Unit Vector}
Let $h$ be the element of $O(D_4)$ which acts as the 3-cycle $(e_1,e_2,e_3)$ and negates $e_4$.  We have $hv_1=v_2$, $hv_2=-v_1-v_2$,  $hv_3=v_1+v_2+v_4$, and $hv_4=v_1+v_2+v_3$, so $K$ is spanned by $\{ (v_1,v_1),(v_2,v_2),(v_3,v_3),(v_4,v_4),(v_1,v_2),(v_2,-v_1-v_2),(v_3,v_1+v_2+v_4),(v_4,v_1+v_2+v_3)\}$.  We may write
\begin{align}
(v_1,v_1)+(v_2,v_2)+(v_3,v_3)+(v_4,v_4)-(v_2,-v_1-v_2)&\nonumber \\
\mbox{}-(v_3,v_1+v_2+v_4)-(v_4,v_1+v_2+v_3)&=(v_1,0)\nonumber \\
(v_1,v_1)-(v_1,0)&=(0,v_1) \nonumber \\
(v_1,v_2)-(v_1,0)&=(0,v_2) \nonumber \\
(v_2,v_2)-(0,v_2)&=(v_2,0) \nonumber \\
(v_3,v_1+v_2+v_4)-(0,v_1)-(0,v_2)&=(v_3,v_4) \nonumber \\
(v_4,v_4)-(v_3,v_4)+(v_3,v_3)+(0,v_1)+(0,v_2)&=(v_4,v_1+v_2+v_3). \nonumber
\end{align}
Thus, $K$ is spanned by $\{(v_1,0),(0,v_1),(v_2,0),(0,v_2),(v_3,v_3),(v_4,v_4),(v_3,v_4)\}$.  These vectors give the Gram matrix
\begin{equation*} G_K=
\begin {pmatrix}
2&0&-1&0&0&0&0\\
0&2&0&-1&0&0&0\\
-1&0&2&0&-1&-1&-1\\
0&-1&0&2&-1&-1&-1\\
0&0&-1&-1&4&0&2\\
0&0&-1&-1&0&4&2\\
0&0&-1&-1&2&2&4
\end{pmatrix},
\end{equation*}
which has determinant 96 and Smith invariants 1, 1, 1, 1, 2, 2, 24.  We can thus embed $K$ with index 2 in an integral lattice of determinant 24.  We can also identify the lattice $M$ with $\sqrt{2}D_4$ and the span of $\{(v_1,-v_1),(v_2,-v_2)\}$ with $\sqrt{2}A_2$, giving a sublattice $J\cong\sqrt{2}D_4 \perp \sqrt{2}A_2 \perp \mathbb{Z}(v_3-v_4,v_4-v_3)$, which has determinant $64\cdot 12 \cdot 8=96\cdot64$ and thus has index 8 in $K$.

\subsection{Negative of 3-cycle of Unit Vectors, fixing the Remaining Unit Vector}
Let $h$ be the negative of the element above, so that $h$ sends $e_1$ to $-e_2$, $e_2$ to $-e_3$, $e_3$ to $-e_1$, and fixes $e_4$. We have $hv_1=-v_2$, $hv_2=v_1+v_2$,  $hv_3=-v_1-v_2-v_4$, and $hv_4=-v_1-v_2-v_3$, and $K$ is spanned by $\{ (v_1,v_1),(v_2,v_2),(v_3,v_3),(v_4,v_4),(v_1,-v_2),(v_2,v_1+v_2),(v_3,-v_1-v_2-v_4),(v_4,-v_1-v_2-v_3)\}$.  We can write $(v_4,-v_1-v_2-v_3)=(v_4,v_4)-(v_3,v_3)+(v_3,-v_1-v_2-v_4)$, and the remaining vectors are linearly independent and give the Gram matrix
\begin{equation*} G_K=
\begin {pmatrix}
4&-2&0&0&3&0&-1\\
-2&4&-2&-2&-3&3&-1\\
0&-2&4&0&1&-2&3\\
0&-2&0&4&1&-2&-1\\
3&-3&1&1&4&-2&0\\
0&3&-2&-2&-2&4&-2\\
-1&-1&3&-1&0&-2&4
\end{pmatrix},
\end{equation*}
which has determinant 32 and Smith invariants 1, 1, 1, 1, 2, 2, 8.  Using a vector of order 4 in the discriminant group, we can embed $K$ in an integral lattice of determinant 8.  Then, either there is yet another vector of order 4, or the discriminant group is elementary abelian and we can find a nontrivial vector of integer norm.  In either case, we can embed $K$ in an integral lattice of determinant 2, which must be either rectangular or isometric to $E_7$.

\subsection{Element of Order 6 Acting as a 3-cycle of Frames of Unit Vectors and Transposition of Frames of Roots}
Let $h$ be the element given by the matrix
\begin{equation*}
\begin {pmatrix}
1/2&-1/2&1/2&1/2\\
1/2&1/2&-1/2&1/2\\
1/2&1/2&1/2&-1/2\\
1/2&-1/2&-1/2&-1/2
\end{pmatrix},
\end{equation*}
which has order 6 and acts as a 3-cycle on the set of frames of unit vectors and a transposition on the set of frames of roots.
We have $hv_1=v_1+v_2+v_4$, $hv_2=-v_1$, $hv_3=-v_2$, and $hv_4=v_1+v_2+v_3$.  Thus, $K$ is spanned by $\{ (v_1,v_1),(v_2,v_2),(v_3,v_3),(v_4,v_4),(v_1,v_1+v_2+v_4),(v_2,-v_1),(v_3,-v_2),(v_4,v_1+v_2+v_3)\}$.  We may write $(v_4,v_1+v_2+v_3)=(v_1,v_1)+(v_2,v_2)+(v_3,v_3)+(v_4,v_4)-(v_1,v_1+v_2+v_4)-(v_2,-v_1)-(v_3,-v_2)$, and the remaining vectors are linearly independent and give the Gram matrix
\begin{equation*}G_K=
\begin {pmatrix}
4&-2&0&0&3&-3&1\\
-2&4&-2&-2&-1&3&-3\\
0&-2&4&0&-1&-1&3\\
0&-2&0&4&1&-1&1\\
3&-1&-1&1&4&-2&0\\
-3&3&-1&-1&-2&4&-2\\
1&-3&3&1&0&-2&4
\end{pmatrix},
\end{equation*}
which has determinant 64 and Smith invariants 1, 1, 2, 2, 2, 2, 4.  Using the same techniques as above, we may embed $K$ with index 8 in an integral lattice of determinant 1, which must be $\mathbb{Z}^7$.

\subsection{Negative of the Above Element}
Let $h$ be the negative of the element from the previous section. We have $hv_1=-v_1-v_2-v_4$, $hv_2=v_1$, $hv_3=v_2$, and $hv_4=-v_1-v_2-v_3$.  Thus, $K$ is spanned by $\{ (v_1,v_1),(v_2,v_2),(v_3,v_3),(v_4,v_4),(v_1,-v_1-v_2-v_4),(v_2,v_1),(v_3,v_2),(v_4,-v_1-v_2-v_3)\}$.  We may write $(v_4,-v_1-v_2-v_3)=-(v_1,v_1)-(v_2,v_2)-(v_3,v_3)+(v_4,v_4)+(v_1,-v_1-v_2-v_4)+(v_2,v_1)+(v_3,v_2)$, and the remaining vectors are linearly independent and give the Gram matrix
\begin{equation*}G_K=
\begin {pmatrix}
4&-2&0&0&1&1&-1\\
-2&4&-2&-2&-1&1&1\\
0&-2&4&0&1&-1&1\\
0&-2&0&4&-1&-1&-1\\ 
1&-1&1&-1&4&-2&0\\
1&1&-1&-1&-2&4&-2\\
-1&1&1&-1&0&-2&4
\end{pmatrix},
\end{equation*}
which has determinant 192 and Smith invariants 1, 1, 2, 2, 2, 2, 12.

\subsection{Element of Order 8}
Let $h$ be the element that sends $e_1$ to $-e_2$, $e_2$ to $e_3$, $e_3$ to $e_4$, and $e_4$ to $e_1$.  We have $hv_1=-v_2-v_3-v_4$, $hv_2=v_3$, $hv_3=-v_1-v_2-v_3$, and $hv_4=v_1+v_2+v_4$.  Thus, $K$ is spanned by $\{ (v_1,v_1),(v_2,v_2),(v_3,v_3),(v_4,v_4),(v_1,-v_2-v_3-v_4),(v_2,v_3),(v_3,-v_1-v_2-v_3),(v_4,v_1+v_2+v_4)\}$, and these vectors give the Gram matrix
\begin{equation*}G_K=
\begin {pmatrix}
4&-2&0&0&3&-1&-1&1\\
-2&4&-2&-2&-1&1&-1&-1\\
0&-2&4&0&-1&1&1&-1\\
0&-2&0&4&-1&-1&1&3\\
3&-1&-1&-1&4&-2&0&0\\
-1&1&1&-1&-2&4&-2&-2\\
-1&-1&1&1&0&-2&4&0\\
1&-1&-1&3&0&-2&0&4
\end{pmatrix},
\end{equation*}
which has determinant 64 and Smith invariants 1, 1, 2, 2, 2, 2, 2.  $K$ is thus a sublattice of index 2 in $D_4\perp D_4$.

\subsection{Element of Order 12}
Let $h$ be the element given by the matrix
\begin{equation*}
\begin {pmatrix}
1/2&1/2&-1/2&1/2\\
1/2&-1/2&1/2&1/2\\
1/2&1/2&1/2&-1/2\\
1/2&-1/2&-1/2&-1/2
\end{pmatrix}.
\end{equation*}
We have $hv_1=v_2+v_4$, $hv_2=v_1$, $hv_3=-v_1-v_2$, and $hv_4=v_2+v_3$, and $K$ is spanned by $\{ (v_1,v_1),(v_2,v_2),(v_3,v_3),(v_4,v_4),(v_1,v_2+v_4),(v_2,v_1),(v_3,-v_1-v_2),(v_4,v_2+v_3)\}$.  These vectors have the Gram matrix
\begin{equation*}G_K=
\begin {pmatrix}
4&-2&0&0&1&1&-1&-1\\
-2&4&-2&-2&0&1&-2&0\\
0&-2&4&0&-1&-1&3&1\\
0&-2&0&4&1&-1&1&1\\
1&0&-1&1&4&-2&0&0\\
1&1&-1&-1&-2&4&-2&-2\\
-1&-2&3&1&0&-2&4&0\\
-1&0&1&1&0&-2&0&4
\end{pmatrix},
\end{equation*}
which has determinant 16.  Thus, $K$ is the whole lattice $D_4\perp D_4$.

\section{$\mathbb{Z}^5$ and $D_5$}
We consider $\mathbb{Z}^5$ and its even sublattice $D_5$ together.  We use $K$ to refer to the lattice obtained from $\mathbb{Z}^5$ and $K'$ to refer to the sublattice obtained from $D_5$. The isometry group of both of these lattices is the group of signed permutation matrices $\mathbb{Z}_2 \wr Sym_5$, which has order $2^5\cdot 5!=3840$.  We use the standard basis $\{v_1=e_1-e_2,v_2=e_2-e_3,v_3=e_3-e_4,v_4=e_4-e_5,v_5=e_4+e_5\}$ for $D_5$.  We can make use of Theorem \ref{ncyclethm} to determine the lattices created by applying a permutation to $\mathbb{Z}^5$. 

\subsection{Transposition}
Let $h$ be the transposition $(e_1,e_2)$.  Then $h$ preserves the orthogonal decomposition $\mathbb{Z}^5=span(e_1,e_2) \perp span (e_3,e_4,e_5)$.  By Theorem \ref{ncyclethm}, the sublattice corresponding to the span of $e_1$ and $e_2$ is isometric to $A_3$, while the span of $e_3$, $e_4$, and $e_5$ is fixed by $h$, so the corresponding lattice is $\sqrt{2}\mathbb{Z}^3={A_1}^3$.  Thus for $\mathbb{Z}^5$ we have $K \cong A_3 \perp {A_1}^3$.  The sublattice $K'$ of index 2 corresponding to $D_5$ is spanned by $\{(v_1,v_1),(v_2,v_2),(v_3,v_3),(v_4,v_4),(v_5,v_5),(v_1,-v_1),(v_2,v_1+v_2)\}$.  We have $(0,v_1)=(v_2,v_1+v_2)-(v_2,v_2)$ and $(v_1,0)=(v_1,v_1)-(0,v_1)$, and thus we can see that $K'$ is spanned by $\{(v_1,0),(0,v_1),(v_2,v_2),(v_3,v_3),(v_4,v_4),(v_5,v_5)\}$.  This gives the Gram matrix
\begin{equation*}
G_{K'}=
\begin{pmatrix}
2&0&-1&0&0&0\\
0&2&-1&0&0&0\\
-1&-1&4&-2&0&0\\
0&0&-2&4&-2&-2\\
0&0&0&-2&4&0\\
0&0&0&-2&0&4
\end{pmatrix},
\end{equation*}
which has determinant 128 and Smith invariants 1, 1, 2, 2, 4, 8.  We can see from this construction that $K'$ contains a sublattice $J\cong\sqrt{2}D_5\perp\mathbb{Z}(v_1,-v_1)$ with index 2 (since $J$ has determinant 512), and we can form $K'$ from $J$ using the glue vector $(v_1,0)$.

\subsection{3-cycle}
Let $h$ be the 3-cycle $(e_1,e_2,e_3)$.  Then by Theorem \ref{ncyclethm}, the lattice $K$ obtained from $\mathbb{Z}^5$ is isometric to $A_5\perp {A_1}^2$.  The sublattice $K'$ is spanned by $\{(v_1,v_1),(v_2,v_2),(v_3,v_3),(v_4,v_4),(v_5,v_5),\\(v_1,v_2),(v_2,-v_1-v_2),(v_3,v_1+v_2+v_3)\}$.  We may write $(v_1,v_2)=(v_1,v_1)+2(v_2,v_2)+3(v_3,v_3)-2(v_2,-v_1-v_2)-3(v_3,v_1+v_2+v_3)$, and the remaining vectors are linearly independent and give the Gram matrix
\begin{equation*}
G_{K'}=
\begin{pmatrix}
4&-2&0&0&0&-2&1\\
-2&4&-2&0&0&1&-1\\
0&-2&4&-2&-2&0&3\\
0&0&-2&4&0&0&-2\\
0&0&-2&0&4&0&-2\\
-2&1&0&0&0&4&-2\\
1&-1&3&-2&-2&-2&4
\end{pmatrix},
\end{equation*}
which has determinant 96 and the Smith invariants 1, 1, 1, 1, 2, 2, 24.

\subsection{4-cycle}
Let $h$ be the 4-cycle $(e_1,e_2,e_3,e_4)$.  Then by Theorem \ref{ncyclethm}, the lattice $K$ obtained from $\mathbb{Z}^5$ is isometric to $A_7\perp A_1$.  The sublattice $K'$ is spanned by $\{(v_1,v_1),(v_2,v_2),(v_3,v_3),(v_4,v_4),(v_5,v_5),\\(v_1,v_2),(v_2,v_3),(v_3,-v_1-v_2-v_3),(v_4,v_1+v_2+v_3+v_4),(v_5,v_1+v_2+v_3+v_5)\}$.  We can write
\begin{align}
(v_1,v_2)=&(v_1,v_1)+2(v_2,v_2)+3(v_3,v_3)+4(v_4,v_4)\nonumber\\
&{}-2(v_2,v_3)-3(v_3,-v_1-v_2-v_3)-4(v_4,v_1+v_2+v_3+v_4)\nonumber\\
(v_5,v_1+v_2+v_3+v_5)=&-(v_4,v_4)+(v_5,v_5)+(v_4,v_1+v_2+v_3+v_4),\nonumber
\end{align}
\begin{equation*}
G_{K'}=
\begin{pmatrix}
4&-2&0&0&0&-1&-1&1\\
-2&4&-2&0&0&1&-1&0\\
0&-2&4&-2&-2&1&1&-1\\
0&0&-2&4&0&-1&0&3\\
0&0&-2&0&4&-1&0&-1\\
-1&1&1&-1&-1&4&-2&0\\
-1&-1&1&0&0&-2&4&-2\\
1&0&-1&3&-1&0&-2&4
\end{pmatrix},
\end{equation*}
which has determinant 64 and Smith invariants 1, 1, 1, 1, 1, 1, 8, 8.

\subsection{5-cycle}
Let $h$ be the 5-cycle $(e_1,e_2,e_3,e_4,e_5)$.  Then by Theorem \ref{ncyclethm}, the lattice $K$ obtained from $\mathbb{Z}^5$ is isometric to $A_9$.  The sublattice $K'$ is spanned by $\{(v_1,v_1),(v_2,v_2),(v_3,v_3),(v_4,v_4),(v_5,v_5),\\(v_1,v_2),(v_2,v_3),(v_3,v_4),(v_4,-v_1-v_2-v_3-v_4),(v_5,v_1+v_2+v_3+v_5)\}$.  We may write
\begin{align}
(v_1,0)=&(v_1,v_1)+(v_2,v_2)+2(v_3,v_3)+(v_4,v_4)+2(v_5,v_5)\nonumber\\
&{}-(v_2,v_3)-2(v_3,v_4)-(v_4,-v_1-v_2-v_3-v_4)-2(v_5,v_1+v_2+v_3+v_5)\nonumber\\
(0,v_1)=&(v_1,v_1)-(v_1,0)\nonumber\\
(0,v_2)=&(v_1,v_2)-(v_1,0)\nonumber\\
(v_2,0)=&(v_2,v_2)-(0,v_2)\nonumber\\
(0,v_3)=&(v_2,v_3)-(v_2,0)\nonumber\\
(v_3,0)=&(v_3,v_3)-(0,v_3)\nonumber\\
(0,v_4)=&(v_3,v_4)-(v_3,0)\nonumber\\
(v_4,0)=&(v_4,v_4)-(0,v_4),\nonumber
\end{align}
and hence $K'$ is spanned by $\{(v_1,0),(0,v_1),(v_2,0),(0,v_2),(v_3,0),(0,v_3),(v_4,0),(0,v_4),(v_5,v_5)\}$, and these vectors give the Gram matrix
\begin{equation*}
G_{K'}=
\begin{pmatrix}
2&0&-1&0&0&0&0&0&0\\
0&2&0&-1&0&0&0&0&0\\
-1&0&2&0&-1&0&0&0&0\\
0&-1&0&2&0&-1&0&0&0\\
0&0&-1&0&2&0&-1&0&-1\\
0&0&0&-1&0&2&0&-1&-1\\
0&0&0&0&-1&0&2&0&0\\
0&0&0&0&0&-1&0&2&0\\
0&0&0&0&-1&-1&0&0&4
\end{pmatrix},
\end{equation*}
which has Smith invariants 1, 1, 1, 1, 1, 1, 1, 1, 40.  $K'$ contains a sublattice $J\cong\sqrt{2}D_5\perp\sqrt{2}A_4$ with index 16, and the glue vectors are $(v_1,0),(v_2,0),(v_3,0),(v_4,0)$.

\subsection{Product of Disjoint Transpositions}
Let $h$ be the product of disjoint transpositions $(e_1,e_2)(e_3,e_4)$.  Then by Theorem \ref{ncyclethm}, the lattice $K$ obtained from $\mathbb{Z}^5$ is isometric to $A_3\perp A_3\perp A_1$.  The sublattice $K'$ is spanned by $\{(v_1,v_1),(v_2,v_2),(v_3,v_3),(v_4,v_4),(v_5,v_5),(v_1,-v_1),(v_2,v_1+v_2+v_3),(v_3,-v_3),(v_4,v_3+v_4),(v_5,v_3+v_5)\}$.  We can see that $K'$ is spanned by $\{(v_1,0),(0,v_1),(v_2,v_2),(v_3,0),(0,v_3),\\(v_4,v_4),(v_5,v_5)\}$, and these vectors give the Gram matrix
\begin{equation*}
G_{K'}=
\begin{pmatrix}
2&0&-1&0&0&0&0\\
0&2&-1&0&0&0&0\\
-1&-1&4&-1&-1&0&0\\
0&0&-1&2&0&-1&-1\\
0&0&-1&0&2&-1&-1\\
0&0&0&-1&-1&4&0\\
0&0&0&-1&-1&0&4
\end{pmatrix},
\end{equation*}
which has Smith invariants 1, 1, 1, 1, 4, 4, 8.  $K'$ contains the sublattice $J\cong\sqrt{2}D_5\perp\mathbb{Z}(v_1,-v_1)\perp\mathbb{Z}(v_3,-v_3)$ with index 4, and the glue vectors are $(v_1,0),(v_3,0)$.

\subsection{Product of Transposition and 3-Cycle}
Let $h$ be the permutation $(e_1,e_2,e_3)(e_4,e_5)$.  Then by Theorem \ref{ncyclethm}, the lattice $K$ obtained from $\mathbb{Z}^5$ is isometric to $A_5\perp A_3$.  The sublattice $K'$ is spanned by $\{(v_1,v_1),(v_2,v_2),(v_3,v_3),\\(v_4,v_4),(v_5,v_5),(v_1,v_2),(v_2,-v_1-v_2),(v_3,v_1+v_2+v_3+v_4),(v_4,-v_4)\}$.  We may write
\begin{align}
(v_1,0)&=(v_1,v_1)+(v_2,v_2)+2(v_3,v_3)+(v_4,v_4)-(v_2,-v_1-v_2)-2(v_3,v_1+v_2+v_3+v_4)-(v_4,-v_4)\nonumber\\
(0,v_1)&=(v_1,v_1)-(v_1,0)\nonumber\\
(0,v_2)&=(v_1,v_2)-(v_1,0)\nonumber\\
(v_2,0)&=(v_2,v_2)-(0,v_2)\nonumber\\
(0,v_4)&=(v_3,v_1+v_2+v_3+v_4)-(v_3,v_3)-(0,v_1)-(0,v_2)\nonumber\\
(v_4,0)&=(v_4,v_4)=(0,v_4),\nonumber
\end{align}
so $K'$ is spanned by $\{(v_1,0),(0,v_1),(v_2,0),(0,v_2),(v_3,v_3),(v_4,0),(0,v_4),(v_5,v_5)\}$, and these vectors give the Gram matrix
\begin{equation*}
G_{K'}=
\begin{pmatrix}
2&0&-1&0&0&0&0&0\\
0&2&0&-1&0&0&0&0\\
-1&0&2&0&-1&0&0&0\\
0&-1&0&2&-1&0&0&0\\
0&0&-1&-1&4&-1&-1&-2\\
0&0&0&0&-1&2&0&0\\
0&0&0&0&-1&0&2&0\\
0&0&0&0&-2&0&0&4
\end{pmatrix},
\end{equation*}
which has Smith invariants 1, 1, 1, 1, 1, 1, 4, 24.  $K'$ contains a sublattice $J\cong\sqrt{2}D_5\perp\sqrt{2}A_2\perp\mathbb{Z}(v_4,-v_4)$ with index 8, and the glue vectors are $(v_1,0),(v_2,0),(v_4,0)$.

\subsection{Negative of Transposition}
Let $h$ be the negative of the transposition $(e_1,e_2)$.  Note that negation at a coordinate gives two orthogonal vectors of norm 2, so lattices corresponding to the spans of $e_3$, $e_4$, and $e_5$ are each isometric to ${A_1}^2$.  Further, the restriction of $h$ to $span(e_1,e_2)$ is conjugate to the positive transposition, since if we let $g$ negate $e_1$ and fix all other coordinates, then $g^{-1}hg$ is the transposition $(e_1,e_2)$.  A similar argument holds for all cycles of even length if we let $g$ negate all odd-numbered coordinates.  Thus the lattice $K$ obtained from $\mathbb{Z}^5$ is $A_3 \perp {A_1}^6$.   The sublattice $K'$ corresponding to $D_5$ is spanned by $\{(v_1,v_1),(v_2,v_2),(v_3,v_3),(v_4,v_4),(v_5,v_5),(v_2,-v_1-v_2),(v_3,-v_3),(v_4,-v_4),(v_5,-v_5)\}$.  These vectors give the Gram matrix
\begin{equation*}
G_{K'}=
\begin{pmatrix}
4&-2&0&0&0&-2&0&0&0\\
-2&4&-2&0&0&1&0&0&0\\
0&-2&4&-2&-2&0&0&0&0\\
0&0&-2&4&0&0&0&0&0\\
0&0&-2&0&4&0&0&0&0\\
-2&1&0&0&0&4&-2&0&0\\
0&0&0&0&0&-2&4&-2&-2\\
0&0&0&0&0&0&-2&4&0\\
0&0&0&0&0&0&-2&0&4
\end{pmatrix},
\end{equation*}
which has determinant 4096 and Smith invariants 1, 1, 2, 2, 2, 2, 4, 8, 8.

\subsection{Negative of Product of Disjoint Transpositions}
Let $h$ be the negative of the product of disjoint transpositions $(e_1,e_2)(e_3,e_4)$.  Then the lattice $K$ obtained from $\mathbb{Z}^5$ is isometric to ${A_3}^2 \perp {A_1}^2$.  The sublattice $K'$ corresponding to $D_5$ is spanned by $\{(v_1,v_1),(v_2,v_2),(v_3,v_3),(v_4,v_4),(v_5,v_5),(v_2,-v_1-v_2-v_3),(v_4,-v_3-v_4),(v_5,-v_3-v_5)\}$. These vectors give the Gram matrix
\begin{equation*}
G_{K'}=
\begin{pmatrix}
4&-2&0&0&0&-2&0&0\\
-2&4&-2&0&0&2&1&1\\
0&-2&4&-2&-2&-2&-2&-2\\
0&0&-2&4&0&1&1&1\\
0&0&-2&0&4&1&1&1\\
-2&2&-2&1&1&4&0&0\\
0&1&-2&1&1&0&4&0\\
0&1&-2&1&1&0&0&4
\end{pmatrix},
\end{equation*}
which has determinant 1024 and Smith invariants 1, 1, 1, 1, 4, 4, 8, 8.

\subsection{Negative of 4-Cycle}
Let $h$ be the negative of the 4-cycle $(e_1,e_2,e_3,e_4)$.  Then the lattice $K$ obtained from $\mathbb{Z}^5$ is isometric to $A_7 \perp {A_1}^2$.  The sublattice $K'$ is spanned by $\{(v_1,v_1),(v_2,v_2),(v_3,v_3),(v_4,v_4),\\(v_5,v_5),(v_1,-v_2),(v_2,-v_3),(v_3,v_1+v_2+v_3),(v_4,-v_1-v_2-v_3-v_4),(v_5,-v_1-v_2-v_3-v_5)\}$.  We can write $(v_3,v_1+v_2+v_3)=(v_1,v_1)+(v_3,v_3)-(v_1,-v_2)$, and the remaining vectors give the Gram matrix
\begin{equation*}
G_{K'}=
\begin{pmatrix}
4&-2&0&0&0&3&-1&-1&-1\\
-2&4&-2&0&0&-3&3&0&0\\
0&-2&4&-2&-2&1&-3&-1&-1\\
0&0&-2&4&0&0&1&1&1\\
0&0&-2&0&4&0&1&1&1\\
3&-3&1&0&0&4&-2&0&0\\
-1&3&-3&1&1&-2&4&0&0\\
-1&0&-1&1&1&0&0&4&0\\
-1&0&-1&1&1&0&0&0&4
\end{pmatrix},
\end{equation*}
which has determinant 512 and Smith invariants 1, 1, 1, 1, 1, 1, 8, 8, 8.

\subsection{Negative of 3-Cycle}
Let $h$ be the negative of the 3-cycle $(e_1,e_2,e_3)$.  If we consider only the span of the first three unit vectors, the corresponding lattice is spanned by $\{e_1+e_6, e_2+e_7, e_3+e_8, e_1-e_7, e_2-e_8, e_3-e_6\}$.  These vectors give the Gram matrix
\begin{equation*}
\begin{pmatrix}
2&0&0&1&0&-1\\
0&2&0&-1&1&0\\
0&0&2&0&-1&1\\
1&-1&0&2&0&0\\
0&1&-1&0&2&0\\
-1&0&1&0&0&2
\end{pmatrix},
\end{equation*}
which has determinant 4.  Since all the vectors in this lattice have even coordinate sum, this is a sublattice of $D_6$, so since it has determinant 4, it must be all of $D_6$.  Hence the lattice $K$ obtained from $\mathbb{Z}^5$ is isometric to $D_6 \perp {A_1}^4$.  The sublattice $K'$ is spanned by $\{(v_1,v_1),(v_2,v_2),(v_3,v_3),(v_4,v_4),(v_5,v_5),(v_1,-v_2),(v_2,v_1+v_2),(v_3,-v_1-v_2-v_3),(v_4,-v_4),(v_5,-v_5)\}$, giving the Gram matrix
\begin{equation*}
G_{K'}=
\begin{pmatrix}
4&-2&0&0&0&3&0&-1&0&0\\
-2&4&-2&0&0&-3&3&-1&0&0\\
0&-2&4&-2&-2&1&-2&1&0&0\\
0&0&-2&4&0&0&0&0&0&0\\
0&0&-2&0&4&0&0&0&0&0\\
3&-3&1&0&0&4&-2&0&0&0\\
0&3&-2&0&0&-2&4&-2&0&0\\
-1&-1&1&0&0&0&-2&4&-2&-2\\
0&0&0&0&0&0&0&-2&4&0\\
0&0&0&0&0&0&0&-2&0&4
\end{pmatrix},
\end{equation*}
which has determinant 1024 and Smith invariants 1, 1, 1, 1, 2, 2, 2, 2, 8, 8.

\subsection{Negative of Product of 3-Cycle and Transposition}
Let $h$ be the negative of the permutation $(e_1,e_2,e_3)(e_4,e_5)$.  Then the lattice $K$ obtained from $\mathbb{Z}^5$ is isometric to $D_6\perp A_3$.  The sublattice $K'$ is spanned by $\{(v_1,v_1),(v_2,v_2),(v_3,v_3),\\(v_4,v_4),(v_5,v_5),(v_1,-v_2),(v_2,v_1+v_2),(v_3,-v_1-v_2-v_3-v_4),(v_5,-v_5)\}$.  These vectors give the Gram matrix
\begin{equation*}
G_{K'}=
\begin{pmatrix}
4&-2&0&0&0&3&0&-1&0\\
-2&4&-2&0&0&-3&3&-1&0\\
0&-2&4&-2&-2&1&-2&2&0\\
0&0&-2&4&0&0&0&-2&0\\
0&0&-2&0&4&0&0&0&0\\
3&-3&1&0&0&4&-2&0&0\\
0&3&-2&0&0&-2&4&-2&0\\
-1&-1&2&-2&0&0&-2&4&-2\\
0&0&0&0&0&0&0&-2&4
\end{pmatrix},
\end{equation*}
which has determinant 256 and Smith invariants 1, 1, 1, 1, 1, 1, 4, 8, 8.

\subsection{Negative of 5-Cycle}
Let $h$ be the negative of the 5-cycle $(e_1,e_2,e_3,e_4,e_5)$.  Then the lattice $K$ obtained from $\mathbb{Z}^5$ has the Gram matrix
\begin{equation*}
G_{K}=
\begin{pmatrix}
2&0&0&0&0&1&0&0&0&-1\\
0&2&0&0&0&-1&1&0&0&0\\
0&0&2&0&0&0&-1&1&0&0\\
0&0&0&2&0&0&0&-1&1&0\\
0&0&0&0&2&0&0&0&-1&1\\
1&-1&0&0&0&2&0&0&0&0\\
0&1&-1&0&0&0&2&0&0&0\\
0&0&1&-1&0&0&0&2&0&0\\
0&0&0&1&-1&0&0&0&2&0\\
-1&0&0&0&1&0&0&0&0&2
\end{pmatrix},
\end{equation*}
which has determinant 4.  Since $K$ is an even sublattice of $\mathbb{Z}^{10}$ with determinant 4, $K\cong D_{10}$.  The sublattice $K'$ is spanned by $\{(v_1,v_1),(v_2,v_2),(v_3,v_3),(v_4,v_4),(v_5,v_5),(v_1,-v_2),(v_2,-v_3),\\(v_3,-v_4),(v_4,v_1+v_2+v_3+v_4),(v_5,-v_1-v_2-v_3-v_5)\}$.  These vectors give the Gram matrix
\begin{equation*}
G_{K'}=
\begin{pmatrix}
4&-2&0&0&0&3&-1&0&1&-1\\
-2&4&-2&0&0&-3&3&-1&0&0\\
0&-2&4&-2&-2&1&-3&3&-1&-1\\
0&0&-2&4&0&0&1&-3&3&1\\
0&0&-2&0&4&0&1&-1&-1&1\\
3&-3&1&0&0&4&-2&0&0&0\\
-1&3&-3&1&1&-2&4&-2&0&0\\
0&-1&3&-3&-1&0&-2&4&-2&-2\\
1&0&-1&3&-1&0&0&-2&4&0\\
-1&0&-1&1&1&0&0&-2&0&4
\end{pmatrix},
\end{equation*}
which has determinant 64 and Smith invariants 1, 1, 1, 1, 1, 1, 1, 1, 8, 8.

\subsection{Negation of One Coordinate}
Let $h$ negate $e_1$ and fix all other coordinates.  Then the lattice $K$ obtained from $\mathbb{Z}^5$ is isometric to ${A_1}^6$.  The sublattice $K'$ is isometric to $\sqrt{2}D_6$ by Theorem \ref{DnNegations}.

\subsection{Negation of Two Coordinates}
Let $h$ negate $e_1$ and $e_2$ and fix all other coordinates.  Then the lattice $K$ obtained from $\mathbb{Z}^5$ is isometric to ${A_1}^7$.  Also, by Theorem \ref{DnNegations}, $K' \cong \sqrt{2}D_7$.

\subsection{Negation of Three Coordinates}
Let $h$ negate three coordinates.  Then we have $K\cong {A_1}^8$ and $K'\cong \sqrt{2}D_8$.

\subsection{Negation of Four Coordinates}
Let $h$ negate four coordinates.  Then we have $K\cong {A_1}^9$ and $K'\cong \sqrt{2}D_9$.

\subsection{Transposition with a Negation} \label{transneg}
Let $h$ send $e_1$ to $-e_2$, $e_2$ to $e_1$, and fix the remaining coordinates.  From section \ref{Z2-90}, we know that the sublattice corresponding to the span of the first two coordinates is isometric to $D_4$.  Thus $K\cong D_4\perp {A_1}^3$.  The sublattice $K'$ is spanned by $\{(v_1,v_1),(v_2,v_2),(v_3,v_3),(v_4,v_4),\\(v_5,v_5),(v_1,-v_1-2v_2-2v_3-v_4-v_5),(v_2,v_1+v_2)\}$.  These vectors give the Gram matrix
\begin{equation*}
G_{K'}=
\begin{pmatrix}
4&-2&0&0&0&2&0\\
-2&4&-2&0&0&-2&3\\
0&-2&4&-2&-2&0&-2\\
0&0&-2&4&0&0&0\\
0&0&-2&0&4&0&0\\
2&-2&0&0&0&4&-2\\ 
0&3&-2&0&0&-2&4
\end{pmatrix},
\end{equation*}
which has determinant 128 and Smith invariants 1, 1, 2, 2, 2, 2, 8.

\subsection{Transposition and Negation of One Disjoint Coordinate}
Let $h$ transpose $e_1$ and $e_2$, negate $e_3$, and fix the other coordinates.  Then the lattice $K$ obtained from $\mathbb{Z}^5$ is isometric to $A_3 \perp {A_1}^4$.  The sublattice $K'$ is spanned by $\{(v_1,v_1),(v_2,v_2),(v_3,v_3),(v_4,v_4),(v_5,v_5),(v_1,-v_1),(v_2,v_1+v_2+2v_3+v_4+v_5),(v_3,-v_3-v_4-v_5)\}$.  We may write $(v_1,-v_1)=(v_1,v_1)+2(v_2,v_2)+2(v_3,v_3)-2(v_2,v_1+v_2+2v_3+v_4+v_5)-2(v_3,-v_3-v_4-v_5)$, and the remaining vectors give the Gram matrix
\begin{equation*}
G_{K'}=
\begin{pmatrix}
4&-2&0&0&0&0&0\\
-2&4&-2&0&0&1&0\\
0&-2&4&-2&-2&0&2\\
0&0&-2&4&0&0&-2\\
0&0&-2&0&4&0&-2\\
0&1&0&0&0&4&-2\\
0&0&2&-2&-2&-2&4
\end{pmatrix},
\end{equation*}
which has determinant 256 and Smith invariants 1, 1, 2, 2, 4, 4, 4.

\subsection{Transposition and Negation of Two Disjoint Coordinates}
Let $h$ transpose $e_1$ and $e_2$, negate $e_3$ and $e_4$, and fix $e_5$.  Then the lattice $K$ obtained from $\mathbb{Z}^5$ is isometric to $A_3 \perp {A_1}^5$.  The sublattice $K'$ is spanned by $\{(v_1,v_1),(v_2,v_2),(v_3,v_3),(v_4,v_4),\\(v_5,v_5),(v_1,-v_1),(v_2,v_1+v_2+2v_3+v_4+v_5),(v_3,-v_3),(v_4,-v_5),(v_5,-v_4)\}$.  We may write $(v_1,-v_1)=(v_1,v_1)+2(v_2,v_2)+2(v_3,v_3)+2(v_4,v_4)-2(v_2,v_1+v_2+2v_3+v_4+v_5)-2(v_3,-v_3)-2(v_4,-v_5)$ and $(v_5,-v_4)=-(v_4,v_4)+(v_5,v_5)+(v_4,-v_5)$, and the remaining vectors give the Gram matrix
\begin{equation*}
G_{K'}=
\begin{pmatrix}
4&-2&0&0&0&0&0&0\\
-2&4&-2&0&0&1&0&0\\
0&-2&4&-2&-2&0&0&0\\
0&0&-2&4&0&0&0&2\\
0&0&-2&0&4&0&0&-2\\
0&1&0&0&0&4&-2&0\\
0&0&0&0&0&-2&4&-2\\
0&0&0&2&-2&0&-2&4
\end{pmatrix},
\end{equation*}
which has determinant 512 and Smith invariants 1, 1, 2, 2, 2, 2, 4, 8.

\subsection{Transposition with Negation and Negation of One Disjoint Coordinate}
Let $h$ send $e_1$ to $-e_2$, $e_2$ to $e_1$, negate $e_3$, and fix the remaining coordinates.   Then $K\cong D_4\perp {A_1}^4$.  The sublattice $K'$ is spanned by $\{(v_1,v_1),(v_2,v_2),(v_3,v_3),(v_4,v_4),(v_5,v_5),(v_1,-v_1-2v_2-2v_3-v_4-v_5),(v_2,v_1+v_2+2v_3+v_4+v_5),(v_3,-v_3-v_4-v_5)\}$.  These vectors give the Gram matrix
\begin{equation*}
G_{K'}=
\begin{pmatrix}
4&-2&0&0&0&2&0&0\\
-2&4&-2&0&0&-2&1&0\\
0&-2&4&-2&-2&0&0&2\\
0&0&-2&4&0&0&0&-2\\
0&0&-2&0&4&0&0&-2\\
2&-2&0&0&0&4&-2&0\\
0&1&0&0&0&-2&4&-2\\
0&0&2&-2&-2&0&-2&4
\end{pmatrix},
\end{equation*}
which has determinant 256 and Smith invariants 1, 1, 2, 2, 2, 2, 4, 4.

\subsection{Transposition with Negation and Negation of Two Disjoint Coordinates}
Let $h$ send $e_1$ to $-e_2$, $e_2$ to $e_1$, negate $e_3$ and $e_4$, and fix $e_5$.   Then $K\cong D_4\perp {A_1}^5$.  The sublattice $K'$ is spanned by $\{(v_1,v_1),(v_2,v_2),(v_3,v_3),(v_4,v_4),(v_5,v_5),(v_1,-v_1-2v_2-2v_3-v_4-v_5),(v_2,v_1+v_2+2v_3+v_4+v_5),(v_3,-v_3),(v_4,-v_5),(v_5,-v_4)\}$.  We have $(v_5,-v_4)=-(v_4,v_4)+(v_5,v_5)+(v_4,-v_5)$, and the remaining vectors give the Gram matrix
\begin{equation*}
G_{K'}=
\begin{pmatrix}
4&-2&0&0&0&2&0&0&0\\
-2&4&-2&0&0&-2&1&0&0\\
0&-2&4&-2&-2&0&0&0&0\\
0&0&-2&4&0&0&0&0&2\\
0&0&-2&0&4&0&0&0&-2\\
2&-2&0&0&0&4&-2&0&0\\
0&1&0&0&0&-2&4&-2&0\\
0&0&0&0&0&0&-2&4&-2\\
0&0&0&2&-2&0&0&-2&4
\end{pmatrix},
\end{equation*}
which has determinant 512 and Smith invariants 1, 1, 2, 2, 2, 2, 2, 2, 8.

\subsection{Transposition with Negation and Negation of Three Disjoint Coordinates}
Let $h$ be the negative of the element in section \ref{transneg}.  We have $K\cong D_4\perp {A_1}^6$.  The sublattice $K'$ is spanned by $\{(v_1,v_1),(v_2,v_2),(v_3,v_3),(v_4,v_4),(v_5,v_5),(v_1,v_1+2v_2+2v_3+v_4+v_5),(v_2,-v_1-v_2),(v_3,-v_3),(v_4,-v_4),(v_5,-v_5)\}$.  These vectors give the Gram matrix
\begin{equation*}
G_{K'}=
\begin{pmatrix}
4&-2&0&0&0&2&-2&0&0&0\\
-2&4&-2&0&0&0&1&0&0&0\\
0&-2&4&-2&-2&0&0&0&0&0\\
0&0&-2&4&0&0&0&0&0&0\\
0&0&-2&0&4&0&0&0&0&0\\ 
2&0&0&0&0&4&-2&0&0&0\\
-2&1&0&0&0&-2&4&-2&0&0\\
0&0&0&0&0&0&-2&4&-2&-2\\
0&0&0&0&0&0&0&-2&4&0\\
0&0&0&0&0&0&0&-2&0&4
\end{pmatrix},
\end{equation*}
which has determinant 4096 and Smith invariants 1, 1, 2, 2, 2, 2, 2, 2, 8, 8.

\subsection{Product of Transposition with Negation and Transposition}
Let $h$ send $e_1$ to $-e_2$, $e_2$ to $e_1$, and switch $e_3$ and $e_4$.  Then we have $K\cong D_4 \perp A_3 \perp A_1$.  The sublattice $K'$ is spanned by $\{(v_1,v_1),(v_2,v_2),(v_3,v_3),(v_4,v_4),(v_5,v_5),(v_1,-v_1-2v_2-2v_3-v_4-v_5),(v_2,v_1+v_2+v_3),(v_3,-v_3),(v_4,v_3+v_4),(v_5,v_3+v_5)\}$.  We may write $(v_3,-v_3)=(v_3,v_3)+2(v_4,v_4)-2(v_4,v_3+v_4)$ and $(v_5,v_3+v_5)=-(v_4,v_4)+(v_5,v_5)+(v_4,v_3+v_4)$, and the other vectors give the Gram matrix
\begin{equation*}
G_{K'}=
\begin{pmatrix}
4&-2&0&0&0&2&0&0\\
-2&4&-2&0&0&-2&2&-1\\
0&-2&4&-2&-2&0&0&0\\
0&0&-2&4&0&0&-1&3\\
0&0&-2&0&4&0&-1&-1\\
2&-2&0&0&0&4&-2&0\\
0&2&0&-1&-1&-2&4&0\\ 
0&-1&0&3&-1&0&0&4
\end{pmatrix},
\end{equation*}
which has determinant 128 and Smith invariants 1, 1, 1, 1, 2, 2, 4, 8.

\subsection{Negative of the Above}
Let $h$ be the negative of the above element.  Then $K \cong D_4 \perp A_3 \perp {A_1}^2$.  
The sublattice $K'$ is spanned by $\{(v_1,v_1),(v_2,v_2),(v_3,v_3),(v_4,v_4),(v_5,v_5),(v_1,v_1+2v_2+2v_3+v_4+v_5),(v_2,-v_1-v_2-v_3),(v_4,-v_3-v_4),(v_5,-v_3-v_5)\}$.  These vectors give the Gram matrix
\begin{equation*}
G_{K'}=
\begin{pmatrix}
4&-2&0&0&0&2&-2&0&0\\
-2&4&-2&0&0&0&2&1&1\\
0&-2&4&-2&-2&0&-2&-2&-2\\
0&0&-2&4&0&0&1&1&1\\
0&0&-2&0&4&0&1&1&1\\
2&0&0&0&0&4&-2&0&0\\
-2&2&-2&1&1&-2&4&0&0\\
0&1&-2&1&1&0&0&4&0\\
0&1&-2&1&1&0&0&0&4
\end{pmatrix},
\end{equation*}
which has determinant 1024 and Smith invariants 1, 1, 1, 1, 2, 2, 4, 8, 8.

\subsection{Product of Disjoint Transpositions with Negations}
Let $h$ send $e_1$ to $-e_2$, $e_2$ to $e_1$, and $e_3$ to $-e_4$, $e_4$ to $e_3$, and fix $e_5$.  Then $K \cong {D_4}^2 \perp A_1$.  The sublattice $K'$ is spanned by $\{(v_1,v_1),(v_2,v_2),(v_3,v_3),(v_4,v_4),(v_5,v_5),(v_1,-v_1-2v_2-2v_3-v_4-v_5),(v_2,v_1+v_2+v_3+v_4+v_5),(v_3,-v_3-v_4-v_5),(v_4,v_3+v_4),(v_5,v_3+v_5)\}$.  We may write $(v_5,v_3+v_5)=-(v_4,v_4)+(v_5,v_5)+(v_4,v_3+v_4)$, and the remaining vectors give the Gram matrix
\begin{equation*}
G_{K'}=
\begin{pmatrix}
4&-2&0&0&0&2&0&0&0\\
-2&4&-2&0&0&-2&2&0&-1\\
0&-2&4&-2&-2&0&-2&2&0\\
0&0&-2&4&0&0&1&-2&3\\
0&0&-2&0&4&0&1&-2&-1\\
2&-2&0&0&0&4&-2&0&0\\
0&2&-2&1&1&-2&4&-2&0\\ 
0&0&2&-2&-2&0&-2&4&-2\\
0&-1&0&3&-1&0&0&-2&4
\end{pmatrix},
\end{equation*}
which has determinant 128 and Smith invariants 1, 1, 1, 1, 2, 2, 2, 2, 8.

\subsection{Negative of the Above}
Let $h$ be the negative of the above element.  Then $K \cong {D_4}^2 \perp {A_1}^2$.  The sublattice $K'$ is spanned by $\{(v_1,v_1),(v_2,v_2),(v_3,v_3),(v_4,v_4),(v_5,v_5),(v_1,v_1+2v_2+2v_3+v_4+v_5),(v_2,-v_1-v_2-v_3-v_4-v_5),(v_3,v_3+v_4+v_5),(v_4,-v_3-v_4),(v_5,-v_3-v_5)\}$.  These vectors give the Gram matrix
\begin{equation*}
G_{K'}=
\begin{pmatrix}
4&-2&0&0&0&2&-2&0&0&0\\
-2&4&-2&0&0&0&2&-2&1&1\\
0&-2&4&-2&-2&0&0&2&-2&-2\\
0&0&-2&4&0&0&-1&0&1&1\\
0&0&-2&0&4&0&-1&0&1&1\\
2&0&0&0&0&4&-2&0&0&0\\
-2&2&0&-1&-1&-2&4&-2&0&0\\
0&-2&2&0&0&0&-2&4&-2&-2\\
0&1&-2&1&1&0&0&-2&4&0\\
0&1&-2&1&1&0&0&-2&0&4
\end{pmatrix},
\end{equation*}
which has determinant 1024 and Smith invariants 1, 1, 1, 1, 2, 2, 2, 2, 8, 8.

\subsection{Product of 3-Cycle and Transposition with Negation}
Let $h$ be act as the 3-cycle $(e_1,e_2,e_3)$ and send $e_4$ to $-e_5$ and $e_5$ to $e_4$.  Then  the lattice $K$ obtained from $\mathbb{Z}^5$ is isometric to $A_5\perp D_4$.  The sublattice $K'$ is spanned by $\{(v_1,v_1),(v_2,v_2),(v_3,v_3),(v_4,v_4),(v_5,v_5),(v_1,v_2),(v_2,-v_1-v_2),(v_3,v_1+v_2+v_3+v_5),(v_4,-v_5),(v_5,v_4)\}$.  We may write
\begin{align}
(v_1,0)=&(v_1,v_1)+(v_2,v_2)+2(v_3,v_3)+(v_4,v_4)+(v_5,v_5)-(v_2,-v_1-v_2)\nonumber\\&{}-2(v_3,v_1+v_2+v_3+v_5)-(v_4,-v_5)-(v_5,v_4)\nonumber\\
(0,v_1)=&(v_1,v_1)-(v_1,0)\nonumber\\
(0,v_2)=&(v_1,v_2)-(v_1,0)\nonumber\\
(v_2,0)=&(v_2,v_2)-(0,v_2)\nonumber\\
(0,v_5)=&(v_3,v_1+v_2+v_3+v_5)-(v_3,v_3)-(0,v_1)-(0,v_2)\nonumber\\
(v_5,0)=&(v_5,v_5)-(0,v_5)\nonumber\\
(0,v_4)=&(v_5,v_4)-(v_5,0)\nonumber\\
(v_4,0)=&(v_4,v_4)-(0,v_4),\nonumber\\
\end{align}
so $K'$ is spanned by $\{(v_1,0),(0,v_1),(v_2,0),(0,v_2),(v_3,v_3),(v_4,0),(0,v_4),(v_5,0),(0,v_5)\}$, and these vectors give the Gram matrix
\begin{equation*}
G_{K'}=
\begin{pmatrix}
2&0&-1&0&0&0&0&0&0\\ 
0&2&0&-1&0&0&0&0&0\\
-1&0&2&0&-1&0&0&0&0\\
0&-1&0&2&-1&0&0&0&0\\
0&0&-1&-1&4&-1&-1&-1&-1\\
0&0&0&0&-1&2&0&0&0\\
0&0&0&0&-1&0&2&0&0\\
0&0&0&0&-1&0&0&2&0\\
0&0&0&0&-1&0&0&0&2
\end{pmatrix},
\end{equation*}
which has determinant 96 and Smith invariants 1, 1, 1, 1, 1, 1, 2, 2, 24.

\subsection{Negative of the Above}
Let $h$ be the negative of the above element.  Then $K \cong D_6 \perp D_4$.  The sublattice $K'$ is spanned by $\{(v_1,v_1),(v_2,v_2),(v_3,v_3),(v_4,v_4),(v_5,v_5),(v_1,-v_2),(v_2,v_1+v_2),(v_3,-v_1-v_2-v_3-v_5),(v_4,v_5),(v_5,-v_4)\}$.  These vectors give the Gram matrix
\begin{equation*}
G_{K'}=
\begin{pmatrix}
4&-2&0&0&0&3&0&-1&0&0\\
-2&4&-2&0&0&-3&3&-1&0&0\\
0&-2&4&-2&-2&1&-2&2&-2&0\\
0&0&-2&4&0&0&0&0&2&-2\\
0&0&-2&0&4&0&0&-2&2&2\\
3&-3&1&0&0&4&-2&0&0&0\\
0&3&-2&0&0&-2&4&-2&0&0\\
-1&-1&2&0&-2&0&-2&4&-2&-2\\
0&0&-2&2&2&0&0&-2&4&0\\
0&0&0&-2&2&0&0&-2&0&4
\end{pmatrix},
\end{equation*}
which has determinant 256 and Smith invariants 1, 1, 1, 1, 1, 1, 2, 2, 8, 8.

\subsection{3-Cycle with Negation of a Disjoint Coordinate} \label{3cycleneg}
Let $h$ act as the 3-cycle $(e_1,e_2,e_3)$, negate $e_4$, and fix $e_5$.  Then the lattice $K$ obtained from $\mathbb{Z}^5$ is isometric to $A_5\perp {A_1}^3$.  The sublattice $K'$ is spanned by $\{(v_1,v_1),(v_2,v_2),(v_3,v_3),(v_4,v_4),\\(v_5,v_5),(v_1,v_2),(v_2,-v_1-v_2),(v_3,v_1+v_2+v_3+v_4+v_5),(v_4,-v_5),(v_5,-v_4)\}$.  We may write $(v_1,v_2)=(v_1,v_1)+2(v_2,v_2)+3(v_3,v_3)+3(v_4,v_4)-2(v_2,-v_1-v_2)-3(v_3,v_1+v_2+v_3+v_4+v_5)-3(v_4,-v_5)$ and $(v_5,-v_4)=-(v_4,v_4)+(v_5,v_5)+(v_4,-v_5)$, and the remaining vectors give the Gram matrix
\begin{equation*}
G_{K'}=
\begin{pmatrix}
4&-2&0&0&0&-2&1&0\\ 
-2&4&-2&0&0&1&-1&0\\
0&-2&4&-2&-2&0&1&0\\
0&0&-2&4&0&0&0&2\\
0&0&-2&0&4&0&0&-2\\
-2&1&0&0&0&4&-2&0\\ 
1&-1&1&0&0&-2&4&-2\\
0&0&0&2&-2&0&-2&4
\end{pmatrix},
\end{equation*}
which has determinant 192 and Smith invariants 1, 1, 1, 1, 2, 2, 4, 12.

\subsection{3-Cycle with Negation of Two Disjoint Coordinates}
Let $h$ act as the 3-cycle $(e_1,e_2,e_3)$ and negate $e_4$ and $e_5$.  Then the lattice $K$ obtained from $\mathbb{Z}^5$ is isometric to $A_5\perp {A_1}^4$.  The sublattice $K'$ is spanned by $\{(v_1,v_1),(v_2,v_2),(v_3,v_3),(v_4,v_4),\\(v_5,v_5),(v_1,v_2),(v_2,-v_1-v_2),(v_3,v_1+v_2+v_3+v_4+v_5),(v_4,-v_4),(v_5,-v_5)\}$.  We may write
\begin{align}
(v_1,0)=&(v_1,v_1)+(v_2,v_2)+2(v_3,v_3)+(v_4,v_4)+(v_5,v_5)-(v_2,-v_1-v_2)\nonumber\\&{}-2(v_3,v_1+v_2+v_3+v_4+v_5)-(v_4,-v_4)-(v_5,-v_5)\nonumber\\
(0,v_1)=&(v_1,v_1)-(v_1,0)\nonumber\\
(0,v_2)=&(v_1,v_2)-(v_1,0)\nonumber\\
(v_2,0)=&(v_2,v_2)-(0,v_2)\nonumber\\
(0,v_4+v_5)=&(v_3,v_1+v_2+v_3+v_4+v_5)-(v_3,v_3)-(0,v_1)=(0,v_2)\nonumber\\
(v_5,-v_5)=&(v_4,v_4)+(v_5,v_5)-(v_4,-v_4)-2(0,v_4+v_5),\nonumber
\end{align}
so $K$ is spanned by $\{(v_1,0),(0,v_1),(v_2,0),(0,v_2),(v_3,v_3),(v_4,v_4),(v_4,-v_4),(v_5,v_5),(0,v_4+v_5)\}$.  These vectors give the Gram matrix
\begin{equation*}
G_{K'}=
\begin{pmatrix}
2&0&-1&0&0&0&0&0&0\\
0&2&0&-1&0&0&0&0&0\\
-1&0&2&0&-1&0&0&0&0\\
0&-1&0&2&-1&0&0&0&0\\
0&0&-1&-1&4&-2&0&-2&-2\\
0&0&0&0&-2&4&0&0&2\\
0&0&0&0&0&0&4&0&-2\\
0&0&0&0&-2&0&0&4&2\\
0&0&0&0&-2&2&-2&2&4
\end{pmatrix},
\end{equation*}
which has determinant 384 and Smith invariants 1, 1, 1, 1, 2, 2, 2, 2, 24.

\subsection{Negative of 3-Cycle, Fixing the Remaining Coordinates}
Take $h$ to be the negative of the above element.  Then $K\cong D_6 \perp {A_1}^2$.  The sublattice $K'$ is spanned by $\{(v_1,v_1),(v_2,v_2),(v_3,v_3),(v_4,v_4),(v_5,v_5),(v_1,-v_2),(v_2,v_1+v_2),(v_3,-v_1-v_2-v_3-v_4-v_5)\}$.  These vectors give the Gram matrix
\begin{equation*}
G_{K'}=
\begin{pmatrix}
4&-2&0&0&0&3&0&-1\\
-2&4&-2&0&0&-3&3&-1\\
0&-2&4&-2&-2&1&-2&3\\
0&0&-2&4&0&0&0&-2\\
0&0&-2&0&4&0&0&-2\\
3&-3&1&0&0&4&-2&0\\
0&3&-2&0&0&-2&4&-2\\
-1&-1&3&-2&-2&0&-2&4
\end{pmatrix},
\end{equation*}
which has determinant 64 and Smith invariants 1, 1, 1, 1, 2, 2, 4, 4.

\subsection{Negative of 3-Cycle with Negation of One Disjoint Coordinate}
Take $h$ to be the negative of the element in section \ref{3cycleneg}.  Then the lattice $K$ obtained from $\mathbb{Z}^5$ is isometric to $D_6 \perp {A_1}^3$.  The sublattice $K'$ is spanned by $\{(v_1,v_1),(v_2,v_2),(v_3,v_3),(v_4,v_4),\\(v_5,v_5),(v_1,-v_2),(v_2,v_1+v_2),(v_3,-v_1-v_2-v_3-v_4-v_5),(v_4,v_5),(v_5,v_4)\}$.  We have $(v_5,v_4)=(v_4,v_4)+(v_5,v_5)-(v_4,v_5)$,  and the remaining vectors give the Gram matrix
\begin{equation*}
G_{K'}=
\begin{pmatrix}
4&-2&0&0&0&3&0&-1&0\\
-2&4&-2&0&0&-3&3&-1&0\\
0&-2&4&-2&-2&1&-2&3&-2\\
0&0&-2&4&0&0&0&-2&2\\
0&0&-2&0&4&0&0&-2&2\\
3&-3&1&0&0&4&-2&0&0\\
0&3&-2&0&0&-2&4&-2&0\\
-1&-1&3&-2&-2&0&-2&4&-2\\
0&0&-2&2&2&0&0&-2&4
\end{pmatrix},
\end{equation*}
which has determinant 128 and Smith invariants 1, 1, 1, 1, 2, 2, 2, 2, 8.

\subsection{4-Cycle with a Negation}
Let $h$ be the element which sends $e_1$ to $e_2$, $e_2$ to $e_3$, $e_3$ to $e_4$, $e_4$ to $-e_1$, and fixes $e_5$.  Considering only the first four basis vectors, we obtain the Gram matrix
\begin{equation*}
\begin{pmatrix}
2&0&0&0&1&0&0&-1\\
0&2&0&0&1&1&0&0\\
0&0&2&0&0&1&1&0\\
0&0&0&2&0&0&1&1\\
1&1&0&0&2&0&0&0\\
0&1&1&0&0&2&0&0\\
0&0&1&1&0&0&2&0\\
-1&0&0&1&0&0&0&2
\end{pmatrix},
\end{equation*}
which has determinant 4.  Since this lattice is even and contained in $\mathbb{Z}^8$, it is isometric to $D_8$, so we have $K\cong D_8 \perp A_1$.  The sublattice $K'$ is spanned by $\{(v_1,v_1),(v_2,v_2),(v_3,v_3),(v_4,v_4),(v_5,v_5),(v_1,v_2),(v_2,v_3),(v_3,v_1+v_2+v_3+v_4+v_5),(v_4,-v_1-v_2-v_3-v_5),(v_5,-v_1-v_2-v_3-v_4)\}$.  We may write $(v_5,-v_1-v_2-v_3-v_4)=-(v_4,v_4)+(v_5,v_5)+(v_4,-v_1-v_2-v_3-v_5)$, and the remaining vectors are linearly independent and give the Gram matrix
\begin{equation*}
G_{K'}=
\begin{pmatrix}
4&-2&0&0&0&1&-1&1&-1\\
-2&4&-2&0&0&1&1&-1&0\\
0&-2&4&-2&-2&-1&1&1&-1\\
0&0&-2&4&0&0&-1&0&3\\
0&0&-2&0&4&0&-1&0&-1\\
1&1&-1&0&0&4&-2&0&0\\
-1&1&1&-1&-1&-2&4&-2&0\\
1&-1&1&0&0&0&-2&4&-2\\
-1&0&-1&3&-1&0&0&-2&4
\end{pmatrix},
\end{equation*}
which has determinant 32 and Smith invariants 1, 1, 1, 1, 1, 1, 2, 2, 8.

\subsection{Negative of the Above}
Let $h$ be the negative of the above element.  Then $K\cong D_8 \perp {A_1}^2$, and the sublattice $K'$ is spanned by $\{(v_1,v_1),(v_2,v_2),(v_3,v_3),(v_4,v_4),(v_5,v_5),(v_1,-v_2),(v_2,-v_3),(v_3,-v_1-v_2-v_3-v_4-v_5),(v_4,v_1+v_2+v_3+v_5),(v_5,v_1+v_2+v_3+v_4)\}$.  These vectors give the Gram matrix
\begin{equation*}
G_{K'}=
\begin{pmatrix}
4&-2&0&0&0&1&-1&1&-1\\
-2&4&-2&0&0&1&1&-1&0\\
0&-2&4&-2&-2&-1&1&1&-1\\
0&0&-2&4&0&0&-1&0&3\\
0&0&-2&0&4&0&-1&0&-1\\
1&1&-1&0&0&4&-2&0&0\\
-1&1&1&-1&-1&-2&4&-2&0\\
1&-1&1&0&0&0&-2&4&-2\\
-1&0&-1&3&-1&0&0&-2&4
\end{pmatrix},
\end{equation*}
which has determinant 256 and Smith invariants 1, 1, 1, 1, 1, 1, 2, 2, 8, 8.

\section{Conclusion}
The cases given above are only a small sampling of the potential of this construction.  These cases, along with the general results given in section \ref{General}, are the results of a short investigation of the procedure applied to $\mathbb{Z}^n$ for small $n$, and to root lattices of small rank.  Continuation of this investigation may lead to more significant results on higher-rank lattices as well as more generalizations.  Also, many lattices arose which were not identified.  Further research may find better ways to describe these lattices.

\end{document}